\documentclass[11pt,a4paper, reqno]{amsart}

\usepackage{a4wide}

\usepackage{amsfonts,amsthm,amssymb,amsmath,amscd,amsopn,mathabx,mathrsfs}
\usepackage[pdftex]{graphicx}
\usepackage{graphics}
\usepackage[matrix,arrow]{xy}
\usepackage[active]{srcltx}
\usepackage[draft]{todonotes}
\usepackage{tensor}

\makeatletter
\@namedef{subjclassname@2020}{\textup{2020} Mathematics Subject Classification}
\makeatother

\usepackage[colorlinks=true]{hyperref}
\hypersetup{
colorlinks   = true,
citecolor    = blue
}

\usepackage{enumitem, hyperref}\hypersetup{colorlinks}

\makeatletter
\def\reflb#1#2{\begingroup
    #2%
    \def\@currentlabel{#2}%
    \phantomsection\label{#1}\endgroup
}
\makeatother

\usepackage{color}

\definecolor{darkred}{rgb}{1,0,0} 
\definecolor{darkgreen}{rgb}{0,0.8,0}
\definecolor{darkblue}{rgb}{0,0,1}

\hypersetup{colorlinks,
linkcolor=darkblue,
filecolor=darkgreen,
urlcolor=darkred,
citecolor=darkblue}

\newtheorem{thm}{Theorem}
\numberwithin{thm}{section}
\numberwithin{equation}{section}
\newtheorem{theorem}{Theorem}
\newtheorem*{theorem*}{Theorem}
\newtheorem{corollary}{Corollary}
\newtheorem*{corollary*}{Corollary}
\newtheorem{lemma}{Lemma}

\newtheorem{proposition}{Proposition}

\newtheorem*{conjecture*}{Conjecture}

\newtheorem*{question*}{Question}
\newtheorem{definition}{Definition}
\newtheorem*{definition*}{Definition}

\newtheorem*{definitions*}{Definitions}

\newtheorem*{rem*}{Remark}
\theoremstyle{remark}
\newtheorem{remark}[thm]{Remark}

\newtheorem*{remark*}{Remark}

\newtheorem*{remarks*}{Remarks}
\newtheorem*{example*}{Example}

\newtheorem*{examples*}{Examples}

\newcommand{\R}{\mathbb{R}}
\newcommand{\Z}{\mathbb{Z}}
\newcommand{\Q}{\mathbb{Q}}
\newcommand{\C}{\mathbb{C}}
\newcommand{\N}{\mathbb{N}}
\newcommand{\T}{\mathbb{T}}

\def\CP{{\mathbb C}P}
\def\RP{{\mathbb R}P}
\def\HP{{\mathbb H}P}
\def\CaP{{\text Ca}P}
\def\PP{{\mathbb P}}

\newcommand{\lcm}{\text{lcm}}

\newcommand{\Id}{\mathit{Id}}

\newcommand{\Hess}{\text{Hess}\,}

\newcommand{\ep}{\epsilon}
\newcommand{\ga}{\gamma}

\newcommand{\om}{\omega}
\newcommand{\Om}{\Omega}

\newcommand{\Sp}{\mathrm{Sp}}

\newcommand    \TSp     {\widetilde{\mathrm{Sp}}}

\newcommand\numberthis{\addtocounter{equation}{1}\tag{\theequation}}

\def\cz{{\mu}}
\def\rs{{\mu_{RS}}}
\def\P{{\mathcal P}}
\def\PP{{\mathscr P}}
\def\HC{{\mathrm{HC}}}
\def\SH{{\mathrm{SH}}}
\def\CC{{\mathrm{CC}}}
\def\HF{{\mathrm{HF}}}
\def\H{{\mathrm{H}}}

\def\L{L^{2m+1}_q(\ell_0,\dots,\ell_m)}

\def\Im{{\text{Im}}}

\def\mi{{\hat\mu}}

\newcommand{\p}{\partial}
\def\vk{\vec{k}}
\def\Nov{{\Lambda}}

\def\bG{{\overline G}}
\def\bH{{\overline H}}
\def\barf{{\overline f}}
\def\cmin{{k_{\text min}}}
\def\sec{{\mathfrak s}}

\def\index{{\text{index}}}

\def\r{{\rho}}
\def\D{{\Delta}}
\def\s{{\sigma}}
\def\lo{{\ell_0}}
\def\l{{\ell}}
\def\cmin{{c_{\text{min}}}}
\def\bmin{{b_{\text{min}}}}
\def\b{{\beta}}
\def\del{{\delta}}
\def\ci{{\nu}}
\def\br{{\overline r}}
\def\bD{{\Delta'}}
\def\bk{{\overline k}}
\def\tPhi{{\tilde\Phi}}
\def\sec{{\mathfrak s}}

\begin{document}

\title[Multiplicity of closed Reeb orbits]{Multiplicity of closed Reeb orbits on\\ contact manifolds with periodic equivariant symplectic homology}

\author[Miguel Abreu]{Miguel Abreu}
\author[Leonardo Macarini]{Leonardo Macarini}

\address{Center for Mathematical Analysis, Geometry and Dynamical Systems,
Instituto Superior T\'ecnico, Universidade de Lisboa, 
Av. Rovisco Pais, 1049-001 Lisboa, Portugal}
\email{mabreu@math.tecnico.ulisboa.pt}

\address{IMPA,
Estrada Dona Castorina, 110, Rio de Janeiro, 22460-320, Brazil}
\email{leonardo@impa.br}

\subjclass[2020]{53D40, 37J11, 37J46} \keywords{Closed orbits, Reeb flows, Conley-Zehnder index, equivariant symplectic homology}

\thanks{MA was partially funded by FCT/Portugal and the Recovery and Resilience Plan (PRR) through projects 2023.13969.PEX, UID/04459/2025 and UID/PRR/04459/2025. LM was partially supported by FAPERJ and CNPq/Brazil.}

\begin{abstract}
We consider closed contact manifolds $(M,\xi)$ with periodic positive equivariant symplectic homology. This is a very large class of contact manifolds and, to the best of our knowledge, includes all currently known examples admitting Reeb flows with finitely many closed orbits for which this homology is well defined. Under weak and homologically natural index assumptions on a non-degenerate contact form $\alpha$ on $M$, we establish a sharp lower bound $r_M$ for the number of simple closed Reeb orbits of $\alpha$. The quantity $r_M$ is completely defined in terms of the positive equivariant symplectic homology of $M$. Moreover, we show that this bound is attained if and only if $\alpha$ is lacunary, i.e., the Conley-Zehnder indices of all closed orbits have the same parity. Consequently, the invariant $r_M$ admits a clean dynamical characterization: whenever $M$ admits a non-degenerate lacunary contact form, $r_M$ equals the number of its simple closed Reeb orbits and is therefore independent of the choice of such a form. In particular, in the lacunary case the number of such orbits is a contact invariant completely determined by Floer theory. We compute $r_M$ for a broad class of examples, including several prequantizations of symplectic orbifolds, and show that in this case $r_M=\dim \H_*(M/S^1;\mathbb{Q})$, thereby giving a topological characterization of the invariant. Motivated by these results, we conjecture that any contact form with finitely many closed Reeb orbits is necessarily non-degenerate and lacunary, and that the underlying contact manifold is a prequantization of this type.
\end{abstract}

\maketitle

\tableofcontents

\section{Introduction}
\label{sec:intro}

The main theme of this paper is the multiplicity problem for geometrically distinct closed Reeb orbits of non-degenerate contact forms satisfying certain weak and homologically natural index conditions on a very large class of contact manifolds, namely those with \emph{periodic} positive equivariant symplectic homology. To the best of our knowledge, this class contains \emph{all currently known examples of contact manifolds admitting Reeb flows with finitely many closed orbits} for which equivariant symplectic homology is a well-defined invariant.

We establish a sharp lower bound for the number of simple closed Reeb orbits on such manifolds, expressed entirely in terms of the positive equivariant symplectic homology. Moreover, we show that this bound is attained if and only if the contact form is \emph{lacunary}, that is, the indices of all closed orbits have the same parity. Thus, the bound admits a clean \emph{dynamical} characterization: whenever a non-degenerate lacunary contact form exists, the bound is precisely the number of its simple closed orbits. In particular, in the lacunary case this number is independent of the contact form and depends only on the contact structure, thereby defining a genuine contact invariant. This invariant can be viewed as a contact analogue of the sum of the Betti numbers in Arnold's conjecture (see the discussion at the end of the introduction). We also compute it in several examples.

Our computation shows that when $M$ is a suitable prequantization bundle over a symplectic orbifold, i.e., it admits a periodic Reeb flow, the invariant $r_M$ also admits a purely \emph{topological} interpretation: it can be read off directly from the equivariant singular homology of $M$. Thus $r_M$ simultaneously carries three distinct and complementary meanings -- \emph{homological} (in terms of positive equivariant symplectic homology), \emph{dynamical} (as a count of simple closed Reeb orbits of lacunary forms), and \emph{topological} (in terms of the equivariant topology of $M$) -- reflecting the interaction between the contact dynamics and the underlying topology.

Let us introduce the problem and its roots. The multiplicity question for geometrically distinct closed Reeb orbits originates in Hamiltonian dynamics and goes back at least a century. In its modern formulation, the problem asks for a lower bound, ideally sharp, for the number of such orbits of a contact form $\alpha$ on a given contact manifold $(M^{2n+1},\xi)$. The form $\alpha$ is usually assumed to satisfy additional conditions which play both conceptual and technical roles. In this work, for instance, we focus on non-degenerate contact forms, a Morse-type condition. Two fundamental tools for studying this problem are \emph{(positive) equivariant symplectic homology} and \emph{linearized contact homology}. The latter was introduced by Eliashberg, Givental and Hofer in their seminal paper \cite{EGH}. The former was introduced by Viterbo \cite{Vit} and subsequently developed by Bourgeois and Oancea \cite{BO09,BO10,BO13a,BO13b,BO17}. As shown in \cite{BO17}, these homologies are isomorphic whenever linearized contact homology is well-defined.

When the linearized contact homology (with rational coefficients) is unbounded, that is, when the dimension of the corresponding vector spaces tends to infinity along some sequence of degrees $k_i \to \infty$, every contact form on $M$ has infinitely many simple closed orbits \cite{HM}; cf. \cite{McL1}. We refer to this situation as the \emph{homologically unbounded} case; see the survey \cite{Mac}.

Thus, for the multiplicity problem the interesting case is the \emph{homologically bounded} one. To the best of our knowledge, every homologically bounded contact manifold currently known has \emph{periodic} positive equivariant symplectic homology, that is, the positive equivariant symplectic homology groups become periodic after some degree; see Definition \ref{def:periodic}. A particular example is given by a prequantization $S^1$-bundle $(M^{2n+1},\xi)$ over a closed integral symplectic manifold $(B,\om)$ such that $\H_{\text{odd}}(B;\Q)=0$. This class includes important examples such as the standard contact sphere.

For such prequantizations, several multiplicity results are known; see the survey \cite{Mac}. Let us briefly describe the current state of the art for the standard contact sphere $(S^{2n+1},\xi)$. Conjecturally, every contact form $\alpha$ supporting $\xi$ has at least $n+1$ simple closed Reeb orbits. This conjecture has been proved when $n=1$ \cite{CGH,GHHM,LL}; however, it remains far from being settled when $n\geq2$. In general, without non-degeneracy or index/action assumptions, it is not even known whether there exists more than one simple closed Reeb orbit when $n\geq2$. When $\alpha$ is non-degenerate, it is easy to see that there must be at least two such orbits (see, e.g., \cite{HL,Gu,AGKM}), but the existence of three simple orbits on, say, $S^5$ is already a difficult open problem.

The situation changes dramatically once one imposes restrictions on the indices of closed Reeb orbits. In a series of papers starting with a groundbreaking work of Long and Zhu \cite{Lon02,LZ}, several multiplicity results were established under what is usually called the dynamical convexity assumption; see \cite{AM2,GG,GM,GKa,Wa13,Wa} and the references therein. For $S^{2n+1}$ this assumption requires that all closed Reeb orbits have Conley-Zehnder index at least $n+2$, and it follows from geometrical convexity. Under this assumption, it has been shown that a non-degenerate dynamically convex contact form on $S^{2n+1}$ must have at least $n+1$ simple closed Reeb orbits, while without the non-degeneracy assumption the number of orbits is at least $\lceil (n+1)/2\rceil +1$. In a recent outstanding work, Cineli, Ginzburg and G\"urel \cite{CGG} proved the bound $n+1$ for any dynamically convex contact form on $S^{2n+1}$, degenerate or not.

In \cite{DLLW}, the lower bound $n+1$ for non-degenerate contact forms $\alpha$ on $S^{2n+1}$ was established under index assumptions that are much weaker than dynamical convexity. More precisely, the main condition requires that, if $n$ is odd, $\alpha$ has no good periodic orbit $\ga$ with $\cz(\ga)=0$, and if $n$ is even, $\alpha$ has no good periodic orbit $\ga$ with $\cz(\ga)\in\{0,-1,1\}$. In \cite{GGM2}, these results were extended to sharp lower bounds for a broader class of prequantization bundles, including, in addition to the sphere, the unit cosphere bundle of a compact rank one symmetric space. We emphasize that multiplicity results for general contact manifolds in dimensions greater than three are very scarce.

In the present work, we obtain sharp lower bounds for the number of periodic orbits under an even weaker index assumption on $\alpha$ which, unlike the previous hypothesis, is homologically natural for general contact manifolds, in the sense that it excludes only homologically \emph{unnecessary} closed orbits; see Remarks \ref{rmk:unnecessary orbits 1} and \ref{rmk:unnecessary orbits 3}. Moreover, we prove that the lower bound is attained if and only if $\alpha$ is lacunary. Furthermore, we consider the \emph{much broader} class of contact manifolds with periodic positive equivariant symplectic homology, which includes prequantizations of symplectic \emph{orbifolds} as well as several other examples.

As already mentioned, the conjectural sharp lower bound for the number of closed orbits on $S^{2n+1}$ is $n+1$. More generally, for a prequantization bundle $M$ over a symplectic manifold, the expected bound is $r_M = \dim \H_*(M/S^1;\Q)$; see \cite{AM5,GGM2}. (Recall that $S^{2n+1}$ is a prequantization of $\CP^n$ and $\dim \H_*(\CP^n;\Q)=n+1$.) A natural and important question is what the corresponding bound should be in general.

To the best of our knowledge, every currently known example of a contact form with finitely many closed orbits is non-degenerate and lacunary. Moreover, the underlying contact manifold $M$ is always a prequantization of a symplectic \emph{orbifold}, and the bound is again
\begingroup
\setlength{\abovedisplayskip}{6pt}
\setlength{\belowdisplayskip}{6pt}
\begin{equation*}
r_M = \dim \H_*(M/S^1;\Q).
\end{equation*}
\endgroup
We prove that this bound holds under our weak and homologically natural index assumptions and that it admits an explicit homological expression in terms of the contact Betti numbers of $M$.

More generally, this homological quantity defines an invariant $r_M$ for contact manifolds with periodic positive equivariant symplectic homology. Our main result shows that $r_M$ is a sharp lower bound under the above weak index assumptions, and that equality holds precisely for lacunary contact forms. Consequently, whenever $M$ admits a non-degenerate lacunary contact form, the number of its simple closed Reeb orbits is independent of the form and completely determined by the positive equivariant symplectic homology of $M$. Equivalently, $r_M$ has a dynamical characterization: it is the number of simple closed Reeb orbits of any non-degenerate lacunary contact form on $M$.

To finish this introduction, it is illuminating to compare our results with analogous questions and results in the setting of Hamiltonian diffeomorphisms and Hamiltonian Floer theory. From the proof of Arnold's conjecture, we have that every non-degenerate Hamiltonian diffeomorphism on a suitable closed symplectic manifold $M$ has at least $\dim \H_*(M;\Q)$ periodic points (in fact, at least $\dim \H_*(M;\Q)$ fixed points). Moreover, it follows from \cite{Sh} that, under suitable assumptions, the existence of more than $\dim \H_*(M;\Q)$ periodic points implies the existence of infinitely many. The result in \cite{Sh} is stated for possibly degenerate Hamiltonian diffeomorphisms, and the relevant bound is expressed in terms of the local Floer homology of the periodic points. However, analogously to the situation of Reeb flows discussed above, all currently known examples of Hamiltonian diffeomorphisms with finitely many periodic points are, to the best of our knowledge, both non-degenerate and lacunary in the obvious analogous sense.

From this perspective, the invariant $r_M$ should be viewed as the analogue, in the contact setting, of $\dim \H_*(M;\Q)$ in Arnold's conjecture: it provides the homological lower bound for the number of simple periodic Reeb orbits.

This bound is clearly not given by $\dim \H_*(M;\Q)$ but it has a topological interpretation. As a matter of fact, we expect -- although this is far from being known -- that if a contact manifold admits a contact form with finitely many closed Reeb orbits, then it should also admit a periodic Reeb flow. In that case, the lower bound for the number of periodic orbits is given by the sum of the Betti numbers of the \emph{equivariant} singular homology $\H^{S^1}_*(M;\Q)$ with respect to the circle action generated by the Reeb flow. The present work shows in particular that, under natural assumptions which are satisfied by all currently known examples with finitely many closed Reeb orbits, this quantity coincides with $r_M$.

\section{Main results}
\label{sec:results}

\subsection{Multiplicity of closed orbits}
Let $(M^{2n+1},\xi)$ be a closed contact manifold. Along this work, we will assume that $(M,\xi)$ satisfies one of the following conditions:
\begin{itemize}
\item[\reflb{cond:F}{(F)}] $(M,\xi)$ has a strong symplectic filling $(W,\Omega)$ such that $c_1(TW)=0$ and the map induced by the inclusion $\H_1(M;\Q) \to \H_1(W;\Q)$ is trivial.
\item[\reflb{cond:NF}{(NF)}] $(M,\xi)$ satisfies $c_1(\xi)=0$, $\H_1(M;\Q)=0$ and admits a non-degenerate \emph{index-admissible} contact form, that is, a non-degenerate contact form such that every \emph{contractible} closed orbit has index bigger than $3-n$.
\end{itemize}

As explained in Section \ref{sec:ESH}, these conditions ensure that $(M,\xi)$ admits a well-defined positive equivariant symplectic homology $\HC_*(M)$, endowed with an integer grading given by the Conley-Zehnder index, which is independent of the choice of trivializations of the determinant line bundles $\Lambda^{n+1}_\C TW$ and $\Lambda^{n}_\C \xi$. Throughout, we consider $\HC_*(M)$ with coefficients in the universal Novikov field; see Section \ref{sec:ESH}.

Under some circumstances, the hypotheses involving $\H_1(W;\Q)$ and $\H_1(M;\Q)$ can be dropped; see Remarks \ref{rmk:homotopy}, \ref{rmk:H^1}, and \ref{rmk:H^1 preqorb} below. In particular, this applies to all examples known to us in which $M$ admits contact forms with finitely many closed orbits.

Condition \ref{cond:F} requires that $M$ admit a ``nice'' symplectic filling, whereas condition \ref{cond:NF} does not assume the existence of a filling, at the expense of requiring that $M$ admit a non-degenerate index-admissible contact form. Under \ref{cond:F}, the homology is defined for any contact form and may depend on the choice of filling $W$. Under \ref{cond:NF}, $\HC_*(M)$ is defined only for index-admissible contact forms and is constructed using Floer trajectories in the symplectization of $M$.

\begin{definition}
\label{def:periodic}
We say that $\HC_*(M)$ is periodic if there exist $P \in \N$ and $\Delta \in \N$ such that $\HC_k(M) \cong \HC_{k+\Delta}(M)$ for every $k\geq P$.
\end{definition}

\emph{Throughout this work, we will assume, without loss of generality, that $\D$ is even.}

\begin{remark}
One is tempted to exchange the notation $P$ and $\D$ since $\D$ is the period. However, we rather keep this notation to highlight the fact that $\HC_*(M)$ is actually \emph{eventually} periodic, or, more precisely, periodic with period $\D$ \emph{after the degree $P$}.
\end{remark}

\begin{remark}
\label{rmk:period HC}
Of course one has the freedom to choose the period $\D$ in the definition above but we can take the minimal one. In this case, in many examples we have $\D=2$ although there are also examples with $\D>2$. We are not aware of any example where $\D=1$.
\end{remark}

There is a huge class of examples of manifolds with periodic positive equivariant symplectic homology. For instance, prequantization $S^1$-bundles of integral symplectic manifolds with lacunary rational homology \cite{GGM2,AM5}, toric contact manifolds \cite{AMM2}, Brieskorn manifolds \cite{KvK,Ust} and subcritical Stein fillable contact manifolds \cite{BO17,Yau} or, more generally, contact manifolds satisfying \ref{cond:F} such that $\SH_*(W)=0$, where $\SH_*(W)$ stands for the (non-equivariant) symplectic homology of $W$ \cite{BO17}. It should be true that every closed contact manifold $M$ admitting a contact form whose Reeb flow generates a locally free circle action has periodic positive equivariant symplectic homology, although it is not known so far. In Section \ref{sec:proof r_M} we show that it is true whenever the symplectic orbifold $M/S^1$ admits a Hamiltonian circle action with isolated fixed points.

Let $b_j=\dim \HC_j(M)$ be the \emph{contact Betti numbers}. Recall that the positive/negative mean Euler characteristic of $M$ is defined as
\[
\chi_\pm(M)=\lim_{k\to\pm\infty} \frac 1k \sum_{j=0}^k (-1)^jb_j
\]
whenever this limit exists. When $\HC_*(M)$ is periodic, the limit $\chi_+(M)$ clearly exists and is given by
\[
\chi_+(M) = \frac{\r}{\Delta},
\]
where $\r=\sum_{j=P}^{P+\Delta-1} (-1)^jb_j = \sum_{j=k}^{k+\Delta-1} (-1)^jb_j$ for any $k\geq P$.

We say that a non-degenerate contact form $\alpha$ is \emph{lacunary} if the indices of every periodic orbit have the same parity. As already mentioned in the introduction, to the best of our knowledge, all the currently known examples of contact forms with finitely many closed orbits are non-degenerate and lacunary \cite{AM5}. Given a non-degenerate lacunary contact form $\alpha$ define its parity $\del \in \Z_2$ as $\cz(\ga)\,(\bmod\,2)$ where $\ga$ is any closed orbit of $\alpha$ and $\cz(\ga)$ is its Conley-Zehnder index \cite{SZ}. In a similar fashion, we say that $\HC_*(M)$ is \emph{lacunary} if the groups $\HC_k(M)$ that do not vanish have the same degree $k\,(\bmod\,2)$. If $M$ admits a non-degenerate lacunary contact form $\alpha$ then $\HC_*(M)$ is also lacunary and with the same parity $\delta$ of $\alpha$ (i.e., $\HC_k(M)=0$ for every $k \neq \del\,(\bmod\,2)$); see Section \ref{sec:ESH}. Note that if $\alpha$ (resp. $\HC_*(M)$) is lacunary then the parity of $\alpha$ (resp. $\HC_*(M)$) is well-defined whenever $\alpha$ has at least one periodic orbit (resp. $\HC_k(M)\neq 0$ for some $k\in \Z$).

Our first main result establishes a lower bound for the number of simple closed orbits of suitable contact forms on manifolds with periodic and lacunary positive equivariant symplectic homology which is an equality if and only if the contact form is lacunary. Although the statement is admittedly somewhat technical, we present it in this general form because its strength lies precisely in its broad applicability. Concrete applications to a variety of examples are given in Section \ref{sec:applications}.

Before stating the theorem, we introduce some notation and terminology. In what follows, $\#\P_\alpha$ denotes the number of simple (i.e., non-iterated) closed orbits of $\alpha$. Recall that the mean index of a periodic orbit $\ga$ is defined by
\[
\mi(\ga) = \lim_{k\to\infty} \frac{1}{k}\,\cz(\ga^k).
\]
A periodic orbit is said to be \emph{good} if the parity of its index agrees with that of the underlying simple closed orbit.

We define a prequantization $S^1$-bundle of a symplectic manifold (resp. orbifold) $(B,\om)$ as a contact manifold $M$ endowed with a contact form $\beta$ whose Reeb flow generates a free (resp. locally free) circle action such that $M/S^1 = B$ and $d\beta = \pi^*\om$, where $\pi \colon M \to B$ is the quotient projection.

We say that a contact manifold $M$ with periodic $\HC_*(M)$ satisfies condition \reflb{cond:PD}{(PD)} if one of the following holds:
\begin{itemize}
\item the period $\D$ of $\HC_*(M)$ is equal to $2$;
\item $M$ is a prequantization of a positive monotone symplectic manifold $B$ with $\H_{\text{odd}}(B;\Q)=0$ and, under \ref{cond:NF}, minimal Chern number greater than $1$;
\item $M$ admits a non-degenerate lacunary contact form.
\end{itemize}

This condition ensures that $\HC_*(M)$ exhibits a symmetry in sufficiently large degrees, reminiscent of a form of Poincar\'e duality (hence the terminology); see Remark \ref{rmk:PD}.

\begin{theorem}
\label{thm:main}
Let $(M^{2n+1},\xi)$ be a closed contact manifold satisfying either \ref{cond:F} or \ref{cond:NF}. Assume that $\HC_*(M)$ is periodic, lacunary with parity $\del$, $b_j \neq 0$ for some $j$, $b_j = 0$ for all $j$ sufficiently negative, and $b_j < \infty$ for all $j \in \Z$.

Let $\alpha$ be a non-degenerate contact form on $M$. Assume that there exists an integer $p \neq \del\,(\bmod\,2)$ such that $\alpha$ satisfies the following conditions:
\begin{itemize}
\item $\alpha$ has no good periodic orbit $\ga$ such that $\cz(\ga)=\pm p$;
\item every closed orbit of $\alpha$ has non-vanishing mean index;
\item under \ref{cond:NF}, $\alpha$ is index-admissible. 
\end{itemize}
If $p>1$ assume that $M$ satisfies \ref{cond:PD}. Then
\[
\#\P_\alpha \geq r_M,
\]
where
\begin{equation*}
r_M=
\begin{cases}
2(-q\chi_+(M) - \sum_{j=-\infty}^q b_j) \text{ if } \del\text{ is odd} \\
2(q\chi_+(M) - \sum_{j=-\infty}^q b_j) + b_q \text{ if } \del\text{ is even,}
\end{cases}
\end{equation*}
and
\[
q=\min\{s\D;\,s\D \geq P,\,s \in \N\}.
\]
Moreover, the equality holds if and only if $\alpha$ is lacunary.
\end{theorem}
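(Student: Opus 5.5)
We combine three ingredients: the Morse-type inequalities relating the chain complex of good orbits of $\alpha$ to $\HC_*(M)$, the index-gap hypothesis at $\pm p$, and a resonance/mean-index counting argument in the style of \cite{GGM2,DLLW}.

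\emph{Step 1: splitting the complex.} Since $\alpha$ is non-degenerate (and index-admissible under \ref{cond:NF}), $\HC_*(M)$ is computed by a complex generated by the good closed orbits, graded by $\cz$. There are no good orbits of index $\pm p$ and $p\not\equiv\del \pmod 2$, so the differentials into and out of degrees $\pm p$ vanish. Hence the truncations of the complex to degrees $>p$, to degrees in $(-p,p)$, and to degrees $<-p$ are subcomplexes/quotients whose homologies are exactly $\HC_*(M)$ in those ranges. Write $c_k$ for the number of good orbits of index $k$. For each of these windows we get the Morse inequalities $c_k\geq b_k$ and the alternating-sum inequalities. Consequently, for every $k\ge p$, the number of good orbits of index in $(p,k]$ is at least $\sum_{j=p+1}^k b_j$, with the parity defect controlled by the odd-parity orbits. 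The same holds on the negative side.

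\emph{Step 2: counting iterates via mean indices.} Each simple orbit $\ga$ has $\mi(\ga)\neq 0$, so the number of its good iterates with index in $[1,N]$ (respectively $[-N,-1]$) is asymptotically $N/|\mi(\ga)|$, up to bounded error. By the common index jump theorem (Long–Zhu), applied in the form used in \cite{GGM2}, I choose an integer multiple $m$ such that all iterates $\ga^{m_\ga}$ have indices within $[m\D_0-\text{small},\,m\D_0+\text{small}]$, where the large-degree window is aligned with the period $\D$ at $q=s\D\ge P$. The index jump structure then says the following. Each simple orbit contributes at most two good iterates to the window of length $\D$ around the jump, namely one from below and one from above, because of the symmetry $\cz(\ga^{m_\ga - k})+\cz(\ga^{m_\ga+k})\approx 2\cz(\ga^{m_\ga})$. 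An orbit contributes exactly one in the middle degree only in the even case, which produces the extra $b_q$ term. Comparing with Step 1, the homology in this window equals one period's worth of $b_j$'s, and this period's worth is expressed through $\chi_+(M)$ and $\sum_{j\le q}b_j$ via $q\chi_+(M)=\sum_{j\le q}(-1)^jb_j+\text{const}$. The lacunarity of $\HC_*$ turns these alternating sums into ordinary sums with sign $(-1)^\del$. This yields $2\#\P_\alpha\ge 2(\pm q\chi_+ - \sum_{j\le q}b_j)$, plus the $b_q$ correction. The index-gap at $\pm p$ is what lets the negative-index orbits be excluded or paired symmetrically. When $p>1$, orbits of index in $(-p,p)$ could break the pairing, and \ref{cond:PD} supplies the symmetry $b_{q+k}=b_{q-k}$ (mod period) needed to still close the count.

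\emph{Step 3: equality.} If $\alpha$ is lacunary, all differentials vanish, so $c_k=b_k$ and every inequality above is an equality, giving $\#\P_\alpha=r_M$. Conversely, suppose some orbit has the wrong parity. In that case, either the Morse inequality in the window is strict, or a bad/odd-parity iterate appears in the jump window. Either way one simple orbit is counted strictly more than forced, giving strict inequality. The main obstacle is Step 2: making the common index jump window exactly match the homological window $[q-\D,q]$ (or its symmetric counterpart), and tracking the boundary degrees. This includes the exact contribution $b_q$ in the even case and the role of \ref{cond:PD} when $p>1$. I would first treat $p=1$ with $\D=2$, as in \cite{GGM2}, and then generalize.
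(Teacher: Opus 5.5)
\textbf{There is a genuine gap.} Your Step 2 does not establish the bound, and the mechanism it proposes is not the right one.

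Even taken at face value, your conclusion $2\#\P_\alpha\geq 2(\pm q\chi_+(M)-\sum_{j\le q}b_j)$ yields only about half of $r_M$. The deeper problem is that comparing good iterates in one window of length $\D$ near the jump degree with the homology of that window gives at best $2\#\P_\alpha\geq\sum_{j=q}^{q+\D-1}b_j=\D|\chi_+(M)|$. That is a per-period quantity: for $S^{2n+1}$ with $\D=2$ it gives $\#\P_\alpha\geq 1$, not $n+1$. By contrast, $r_M$ is built from the defect $q\chi_+(M)-\sum_{j\le q}(-1)^jb_j$ of \emph{all} the homology below $q$ relative to its periodic average. Writing ``$q\chi_+(M)=\sum_{j\le q}(-1)^jb_j+\text{const}$'' only names that constant (Proposition \ref{prop:periodicity}). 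It does not relate it to any window count, so this step assumes what must be proved.

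The claim that each orbit puts at most two good iterates in the window is also false. By Theorem \ref{thm:IRT}, every $\ell\le\lo$ with $|\cz(\ga^\ell)|\le\D/2$ puts both $\ga^{k-\ell}$ and $\ga^{k+\ell}$ there, and an orbit with small $|\mi(\ga)|$ can have many such $\ell$. Asymptotic counts ``up to bounded error'' cannot give a sharp bound either. Your Step 1 splitting at $\pm p$ is true, but it is not how the hypothesis $c_{\pm p}=0$ enters the argument.

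The missing idea is the \emph{exact} resonance identity combined with a count of \emph{all} iterates up to the jump. Take $(d,k_1,\dots,k_r)$ from Theorem \ref{thm:IRT} with $d\in\D\Z$, all $k_i$ even and $\eta$ small. Then \eqref{eq:resonance} and integrality give $\sum_i\sum_{\ell=1}^{k_i}\chi(\ga_i^\ell)=\sum_ik_i\hat\chi(\ga_i)=d\chi_+(M)$ (Lemma \ref{lemma:resonance}). Sorting the iterates and cancelling the pairs $\ga_i^{k_i\pm\ell}$ yields
\[
\sum_{m=-d-p}^{d+p}(-1)^mc_m=d\chi_+(M)-\bigl(r^{e,p}_+-r^{o,p}_+\bigr)+\sum_{j=-p}^{p}(-1)^jc_j .
\]
Here $r^{e,p}_+$ and $r^{o,p}_+$ count simple orbits with $\ga_i^{k_i}$ good, of even/odd index, and $\s_i\cz(\ga_i^{k_i})>d+p$. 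The alternating Morse inequalities on $[-d-p,d+p]$ (using $-d+p<\bmin$) and Proposition \ref{prop:periodicity} then bound the number of such orbits from below.

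The factor $2$ in $r_M$ comes from the second sequence $(d',k'_i)$. By property (iii) of Theorem \ref{thm:IRT}, the same bound holds for the \emph{disjoint} set of orbits with $\s_i\cz(\ga_i^{k_i})<d-p$. The remaining terms ($2b_{q+p}\pm b_q$ in Theorem \ref{thm:main1}, which reduce to $b_q$ or $0$ under lacunarity) come from orbits with $\cz(\ga_i^{k_i})=\pm d+j$, $|j|\le p$. These are counted by $c_{d+j}+c_{-d+j}-c_j-c_{-j}$. This is where $c_{\pm p}=0$, the Morse inequalities near $\pm d$, and \ref{cond:PD} are used.

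For the equality statement:
\begin{itemize}
\item The ``only if'' direction needs the refined bound $\#\P^{\text{good}}_\alpha\geq r_M+r^{e}$ (resp.\ $r^o$), where $r^{e}$ counts orbits with $\ga_i^{k_i}$ good and of the wrong parity. It also needs the observation that any orbit with $\ga_i^{k_i}$ bad adds one more orbit.
\item The ``if'' direction needs that a lacunary form has only finitely many simple orbits. This follows from the positivity of all mean indices, Lemma \ref{lemma:combinatorial}, $b_j<\infty$, and periodicity. Your Step 3 takes it for granted.
\end{itemize}
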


\begin{remark}[The bound $r_M$ is well-defined]
The fact that $r_M$ does not depend on the choices of $P$ and $\D$ is an easy consequence of the periodicity of $\HC_*(M)$; see Proposition \ref{prop:periodicity}. We can actually take $q$ as any multiple of $\D$ bigger than or equal to $P$.
\end{remark}

\begin{remark}[The homological, dynamical and topological meanings of $r_M$]
Conceptually, the number $r_M$ measures the finite-dimensional defect between the initial part of the positive equivariant symplectic homology and its eventual periodic average. Thus, $r_M$ is a purely homological quantity, in the same sense that the sum of the Betti numbers is the homological quantity appearing in Arnold's conjecture. One of the main points of Theorem \ref{thm:main} is that this homological quantity admits an exact dynamical realization: whenever a non-degenerate lacunary contact form exists, $r_M$ coincides with the number of its simple closed Reeb orbits. This interpretation is made precise in Corollary \ref{cor:r_M lacunary}  and in the discussion preceding it. Furthermore, as proved in Theorem \ref{thm:r_M} below, we also have a topological characterization of $r_M$: when $M$ is a suitable prequantization of an orbifold, i.e. it admits a periodic Reeb flow, then $r_M=\dim \H_*^{S^1}(M;\Q)$ where the $S^1$-action is the one induced by the periodic Reeb flow.
\end{remark}

\begin{remark}[Naturality of the assumptions on $M$]
To the best of our knowledge, every closed contact manifold known so far that satisfies either \ref{cond:F} or \ref{cond:NF} and admits a contact form with finitely many closed orbits satisfies all the assumptions on $M$ above. Recall that \ref{cond:F} and \ref{cond:NF} are the conditions to have a well-defined equivariant symplectic homology with an integer grading using a suitable trivialization.
\end{remark}

\begin{remark}[The hypothesis that $\alpha$ has no good periodic orbit $\ga$ such that $\cz(\ga)=\pm p$]
\label{rmk:unnecessary orbits 1}
It turns out that $\HC_*(M)$ can be obtained as the homology of a complex generated by the good periodic orbits of $\alpha$ graded by the index; see Section \ref{sec:ESH}. Therefore, if $\HC_k(M)\neq 0$ then every non-degenerate contact form must have a combination of good closed orbits with index $k$ that represents a non-trivial homology class; such orbits are homologically \emph{necessary}. Consequently, the assumption that $\alpha$ has no good periodic orbit $\ga$ such that $\cz(\ga)=\pm p$ for some $p \neq \del\,(\bmod\,2)$ excludes periodic orbits which are \emph{unnecessary} from the homological point of view; cf. Remark \ref{rmk:unnecessary orbits 4}. In particular, these conditions are automatically satisfied by lacunary contact forms, where \emph{all} the periodic orbits are homologically necessary.

Let us emphasize that this is much weaker than the index hypotheses imposed in previous multiplicity results, such as dynamical convexity or the exclusion of whole ranges of low-index orbits. We expect this hypothesis to be technical and ultimately removable. Indeed, because $\HC_*(M)$ is lacunary with parity $\del$, the homology in degrees $\pm p$, with $p\neq \del\,(\bmod\,2)$, vanishes. Hence the excluded orbits are not among those forced by the positive equivariant symplectic homology of $M$. Their presence would have to be compensated by cancellations in the chain complex, and one expects such cancellations to require additional generators. For this reason, we expect the inequality $\#\P_\alpha \geq r_M$, and the characterization of the equality case, to remain true without this assumption. The present proof, however, uses the absence of these homologically unnecessary orbits in an essential way; cf. Remarks  \ref{rmk:unnecessary orbits 3}, \ref{rmk:proof} and \ref{rmk:unnecessary orbits 4}.
\end{remark}

\begin{remark}[The hypothesis that every closed orbit of $\alpha$ has non-vanishing mean index]
\label{rmk:unnecessary orbits 2}
In a similar vein to the previous remark, the assumption that $b_j<\infty$ for every $j \in \Z$ implies that closed orbits with zero mean index are \emph{unnecessary} from the homological point of view. In particular, if $\alpha$ is non-degenerate and lacunary then this assumption forces that every closed orbit of $\alpha$ has non-zero mean index. Indeed, if $\alpha$ has a periodic orbit with vanishing mean index then the contribution of its iterates in the chain complex $\CC_*(\alpha)$ of $\HC_*(M)$ must lie in the degree range $(-n,n)$. Consequently, we have that the dimension of $\CC_*(\alpha)$ in at least one of such degrees must be infinite.
\end{remark}

\begin{remark}[Comparison with previous results]
\label{rmk:unnecessary orbits 3}
In \cite{GGM2}, Ginzburg, G\"urel, and the second author, generalizing the aforementioned work \cite{DLLW}, established a special case of the lower bound in Theorem \ref{thm:main}, namely when $M$ is a suitable prequantization of a smooth symplectic manifold $B$ with lacunary $\H_*(B;\Q)$; see also \cite{DLLW2}. The main assumption in that setting is the absence of closed orbits $\ga$ with $\cz(\ga)\in\{0,-1,1\}$ when $\delta$ is even, and with $\cz(\ga)=0$ when $\delta$ is odd, where in this case $\delta = n \pmod{2}$. However, as explained in Remark \ref{rmk:unnecessary orbits 1}, the exclusion of periodic orbits with index zero when $\delta$ is even may not be possible in general, since there exist examples satisfying the hypotheses of Theorem \ref{thm:main} for which $\HC_0(M)\neq 0$. This motivates a generalization and weakening of that condition to the hypothesis used here, which is more natural from a homological perspective and imposes no such obstruction.

Moreover, even in the case of the sphere $S^{2n+1}$, our result provides a substantial refinement of the results in \cite{DLLW,DLLW2,GGM2}. Indeed, while the condition excluding closed orbits $\ga$ with $\cz(\ga)\in\{0,-1,1\}$ when $n$ is even, and with $\cz(\ga)=0$ when $n$ is odd, can be viewed as a form of ``very weak dynamical convexity'', the hypothesis considered here has a different conceptual nature: it is homological in flavor, amounting to the exclusion of periodic orbits in prescribed degrees $\pm p$ in which $\HC_*(M)$ vanishes. Furthermore, we prove that equality in the lower bound holds if and only if the contact form is lacunary.

The results established in this work are new even in several classical and well-studied examples. To the best of our knowledge, prior to this work, sharp lower bounds for the number of periodic Reeb orbits had not been established in the generality considered here. In the case of toric contact manifolds, for instance, the existing literature produced sharp bounds essentially only for the standard contact sphere and a small number of very special prequantization bundles; no sharp results were available for the broad class of toric contact manifolds covered here; see Section \ref{sec:applications}. Similarly, for several other examples discussed in Section \ref{sec:applications}, the sharpness of the lower bounds obtained here appears for the first time. Thus, beyond providing a unified framework, the present paper yields genuinely new quantitative information in these settings.
\end{remark}

\begin{remark}[The lacunarity assumption]
\label{rmk:non-lacunary}
The lacunarity assumption on $\HC_*(M)$ is used in Theorem \ref{thm:main} mainly to obtain a canonical, sharp value of the lower bound $r_M$. As a matter of fact, we prove Theorem \ref{thm:main} as a consequence of the more general Theorems \ref{thm:main1} and \ref{thm:main2} where we do not need, for the multiplicity problem, to assume either that $\HC_*(M)$ is lacunary or that $b_j\neq 0$ for some $j \in \Z$. The reason for the last hypothesis is simply to define the parity of $\HC_*(M)$ when $\HC_*(M)$ is lacunary. (If $\HC_*(M)$ vanishes identically it is obviously lacunary but it does not have a well-defined parity. However, under this situation, the lower bound below vanishes.) In this more general situation, given an integer $p\geq 0$ we still have the inequality $\#\P_\alpha \geq r_M$ with 
\begin{equation}
\label{eq:r_M general}
r_M=
\begin{cases}
2(-q\chi_+(M) + \sum_{j=-\infty}^q (-1)^jb_j) \text{ if } p=0 \\
2(-q\chi_+(M) + \sum_{j=-\infty}^q (-1)^jb_j) + 2b_{q+p} - b_q \text{ if }\ p\ \text{is even and}\ \geq 2, \\
2(q\chi_+(M) - \sum_{j=-\infty}^q (-1)^jb_j) + 2b_{q+p} + b_q \text{ if } p\ \text{ is odd},
\end{cases}
\end{equation}
with the same main assumption on $\alpha$ as in Theorem \ref{thm:main}: $\alpha$ has no good periodic orbits $\ga$ such that $\cz(\ga)=\pm  p$. In this more general context, $r_M$ depends on the choice of $p$ and the inequality $\#\P_\alpha \geq r_M$ is no longer sharp in general; see Remark \ref{rmk:unnecessary orbits 4}. Assuming that $\D$ is even, as we do in this work, clearly the formula for $r_M$ above coincides with the definition of $r_M$ given in Theorem \ref{thm:main} when $\HC_*(M)$ is lacunary with parity $\del \neq p\,(\bmod\, 2)$ because $q$ is even.
\end{remark}

\begin{remark}[A few words about the proof of Theorem \ref{thm:main}]
\label{rmk:proof}
As already explained in Remark \ref{rmk:non-lacunary} above, Theorem \ref{thm:main} follows from the more general Theorems \ref{thm:main1} and \ref{thm:main2}. Explaining the proofs of these results at this stage would require substantial notation and lead us into rather technical considerations. We therefore limit ourselves to mentioning that the main ingredients are equivariant symplectic homology and index theory. Very roughly speaking, the proof uses in a highly non-trivial way the Morse inequalities for positive equivariant symplectic homology combined with resonance relations and index recurrence properties of the iterates of finitely many simple orbits. The exclusion of closed orbits with the homologically unnecessary degrees allows certain terms to cancel, leading to the desired sharp lower bound. An outline of the argument is provided in Section \ref{sec:proofmain1-outline}.
\end{remark}

\begin{remark}[Decomposition in free homotopy classes]
\label{rmk:homotopy}
Assuming that $M$ satisfies \ref{cond:F} (resp.\ \ref{cond:NF}), we have, as discussed in Section \ref{sec:ESH}, that $\HC_*(M)$ admits a decomposition according to the free homotopy classes in $W$ (resp. $M$). Let $\Gamma$ be a subset of free homotopy classes in $W$ (resp. $M$) that is \emph{closed under iterations}, i.e., if $a \in \Gamma$, then $a^k \in \Gamma$ for every $k \in \N$. Then all the results above remain valid when restricted to closed orbits whose free homotopy classes lie in $\Gamma$, with the obvious modifications: one considers the component $\HC^\Gamma_*(M)$ of $\HC_*(M)$ corresponding to $\Gamma$, and defines a non-degenerate lacunary contact form by requiring that the indices of all closed orbits with free homotopy class in $\Gamma$ have the same parity.

In \cite{AM5,GGM2}, it is considered only contractible orbits. In this case, the condition that $c_1(TW)=0$ (resp. $c_1(\xi)=0$) can be weakened to the hypothesis that  $c_1(TW)|_{\pi_2(W)}=0$ (resp. $c_1(\xi)|_{\pi_2(M)}=0$). Moreover, we can also drop the assumptions regarding $\H_1(W;\Q)$ and $\H_1(M;\Q)$. More generally, we can drop these last hypotheses whenever the image of $\Gamma$ under the tensorized Hurewicz map $\pi_1(W)/[\pi_1(W),\pi(W)] \to \H_1(W;\Z) \otimes \Q$ (resp. $\pi_1(M)/[\pi_1(M),\pi(M)] \to \H_1(M;\Z) \otimes \Q$) vanishes.
\end{remark}

\begin{remark}[Further discussion about lacunarity]
\label{rmk:parities n and del}
To the best of our knowledge, all currently known examples of contact forms with finitely many closed orbits are non-degenerate and lacunary, with parity $\del$ equal to that of $n$. The reason why $\del = n \,(\bmod\,2)$ is that, in all these examples, the periodic orbits are elliptic, and it is well known that $\cz(\ga) = n \,(\bmod\,2)$ whenever $\ga$ is elliptic. However, there exist non-degenerate lacunary contact forms whose parity does not coincide with that of $n$. Indeed, consider a closed Riemannian manifold $N$ of dimension $d$ with negative sectional curvature. Then the Liouville form on $M = S^*N$ is non-degenerate and lacunary with even parity, while $n = d-1$ may be odd (note that $M$ has dimension $2n+1$). This follows from the well-known fact that every closed geodesic on $N$ is non-degenerate and has Morse index zero. Observe, however, that this contact form has infinitely many simple closed orbits, all of which are hyperbolic; cf.\ Remark \ref{rmk:assumption finite b_j} below. In this example, it is known that every contact form on $M$ has infinitely many closed orbits; see \cite{MP}. As already noted in \cite{AM5}, we are not aware of examples of lacunary contact forms with both elliptic \emph{and} hyperbolic orbits. Note also that the natural filling of $S^*N$ given by the unit codisk bundle $D^*N$ does not satisfy, in this example, the hypotheses involving $\H_1(D^*N;\Q)$ and $\H_1(S^*N;\Q)$ used to define the index of closed orbits independently of the choice of a trivialization of $\Lambda^{n}_\C TD^*N$, as in \ref{cond:F}; see Section \ref{sec:ESH}. Here we choose trivializations so that the Conley-Zehnder index coincides with the Morse index; see, for instance, \cite{AS}. Nevertheless, the parity of the index does not depend on the choice of these trivializations.
\end{remark}

\begin{remark}[The assumption that $b_j < \infty$ for every $j \in \Z$ ]
\label{rmk:assumption finite b_j}
The assumption that $b_j < \infty$ for every $j \in \Z$ is necessary to ensure that if $\alpha$ is lacunary then it has finitely many periodic orbits. Indeed, consider, as in Remark \ref{rmk:parities n and del}, a closed Riemannian manifold $N$ with negative sectional curvature. Then $M=S^*N$ is a contact manifold satisfying \ref{cond:F} such that $b_k=0$ for every $k \neq 0$ and $b_0=\infty$. In particular, $\HC_*(M)$ is periodic and lacunary. However, the geodesic flow is lacunary and has infinitely many simple closed orbits. 
\end{remark}

\begin{remark}[The meaning of property \ref{cond:PD}]
\label{rmk:PD}
It follows from Propositions \ref{prop:PD preq} and \ref{prop:PD finite}, together with the periodicity of $\HC_*(M)$, that if $M$ satisfies \ref{cond:PD}, then $\HC_*(M)$ exhibits the following symmetry. Let $s \in \N$ be such that $s\D \geq P$. Then $\HC_*(M)$ is symmetric with respect to reflection about the middle degree $s\D + \D/2$ (recall that $\D$ is assumed to be even) within the range of degrees $[s\D,(s+1)\D]$. More precisely, we have $\HC_{(s\D + \D/2)-j}(M) \cong \HC_{(s\D + \D/2)+j}(M)$ for all $j \in [0,\D/2]$. This is the property used in the proof of Theorem \ref{thm:main}.
\end{remark}

By assumption, there exists $D \in \Z$ such that $b_j=0$ for every $j<D$. The number $r_M$ in Theorem \ref{thm:main} measures, for $\del$ odd, how much the finite sum
\[
\sum_{j=-\infty}^{q} b_j = \sum_{j=D}^{q} b_j
\]
deviates from the average $-q\chi_+(M)$ (note that if $\del$ is odd then $\chi_+(M)$ is negative since $b_{\mathrm{even}}=0$), or, more precisely, its ``asymptotic average'', that is, $q$ times the asymptotic average
\[
-\chi_+(M) = \lim_{k\to\infty} \frac 1k \sum_{j=D}^k b_j.
\]
When $\del$ is even we have the extra term $b_q$.

The computation of $r_M$ can be difficult in general. However, the following immediate consequence of Theorem \ref{thm:main} is one of the main conceptual points of this work and provides the aforementioned dynamical characterization of $r_M$.

\begin{corollary}
\label{cor:r_M lacunary}
Let $(M^{2n+1},\xi)$ be a closed contact manifold satisfying either \ref{cond:F} or \ref{cond:NF}. Assume that $\HC_*(M)$ is periodic, lacunary, $b_j \neq 0$ for some $j$, $b_j = 0$ for all $j$ sufficiently negative, and $b_j < \infty$ for all $j \in \Z$. Suppose that $M$ admits a non-degenerate lacunary contact form $\alpha$ with parity $\del$. Then $\#\P_{\alpha}$ and $\del$ do not depend on the choice of $\alpha$ and
\[
r_M = \#\P_{\alpha}.
\]
In particular, $\#\P_{\alpha}$ is a contact invariant completely determined by $\HC_*(M)$.
\end{corollary}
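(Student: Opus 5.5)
Corollary \ref{cor:r_M lacunary} follows from Theorem \ref{thm:main}. Let $\alpha$ be a non-degenerate lacunary contact form with parity $\del$. The plan is to check that $\alpha$ satisfies every hypothesis of Theorem \ref{thm:main}, with the parity of $\HC_*(M)$ equal to $\del$, and then apply the equality clause.

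We first fix the parity. As recalled before Theorem \ref{thm:main} (and proved in Section \ref{sec:ESH}), $\HC_*(M)$ is computed by a chain complex generated by the good closed orbits of $\alpha$, graded by the Conley-Zehnder index. Since all these generators have index $\equiv \del \pmod 2$, the complex vanishes in degrees of the other parity, so $\HC_k(M)=0$ for $k\not\equiv\del$. We assume $b_j\neq 0$ for some $j$, so $\alpha$ has at least one closed orbit and the parity of $\HC_*(M)$ is exactly $\del$. Hence $\del$ is determined by $\HC_*(M)$ alone and does not depend on $\alpha$.

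Next we verify the hypotheses on $\alpha$. Choose $p=1$ if $\del$ is even and $p=0$ if $\del$ is odd; in both cases $p\not\equiv\del \pmod 2$ and $p\le 1$. With this choice the (PD) hypothesis, required only for $p>1$, is never needed. Since every closed orbit of $\alpha$ has index of parity $\del$, there is no orbit at all, good or not, with $\cz(\ga)=\pm p$.

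Non-vanishing mean index follows from the argument of Remark \ref{rmk:unnecessary orbits 2}. Suppose $\ga$ had $\mi(\ga)=0$. All its iterates would then have indices in $(-n,n)$, which is a bounded range. Each $\ga^k$ with $k$ odd is good, so infinitely many generators would sit in a single degree of parity $\del$. By lacunarity the differential of the complex vanishes, so the complex equals $\HC_*(M)$, and this contradicts $b_j<\infty$.

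Under (NF), we also need $\alpha$ to be index-admissible so that the theorem applies. I take this to be implicit in the definition of $\HC_*(M)$ in that case, since the homology is only defined through index-admissible forms. This is the one point where I expect some care to be needed: one may have to add index-admissibility as a standing convention, or else deduce it from lacunarity together with the vanishing of $b_j$ for very negative $j$.

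With all hypotheses in place, Theorem \ref{thm:main} gives $\#\P_\alpha\ge r_M$, and its equality clause gives $\#\P_\alpha=r_M$ because $\alpha$ is lacunary. The number $r_M$ is defined purely from $\HC_*(M)$ through the $b_j$, $\chi_+(M)$, $P$ and $\D$, and it is independent of these choices by the well-definedness remark. Therefore $\#\P_\alpha$ is the same for every non-degenerate lacunary form and is determined by $\HC_*(M)$.
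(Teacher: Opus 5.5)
Your argument is correct and follows the paper's route: the paper states the corollary as an immediate consequence of Theorem \ref{thm:main}, whose equality clause for lacunary forms is really Theorem \ref{thm:main2} (you could cite it directly, since it needs neither the $\pm p$ exclusion nor the mean-index hypothesis), and for the ``contact invariant'' part the paper also uses that for a lacunary form the differential of $\CC_*(\alpha)$ vanishes, so $\HC_*(M)$ does not depend on the filling. You are right to read index-admissibility under \ref{cond:NF} as an implicit hypothesis, and it is stated explicitly in Theorem \ref{thm:main2}; the alternative of deducing it from lacunarity would not work, because without it $\CC_*(\alpha)$ is not known to compute $\HC_*(M)$ at all.
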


The assertion that $\#\P_{\alpha}$ is a contact invariant determined by $\HC_*(M)$ is a consequence of Theorem \ref{thm:main} and the fact that if $M$ admits a non-degenerate lacunary contact form then $\HC_*(M)$ does not depend on the filling; see Section \ref{sec:ESH}.

Using this, one can easily compute $r_M$ in, as far as we know, \emph{every currently known example of contact manifold admitting Reeb flows with finitely many closed orbits} satisfying either \ref{cond:F} or \ref{cond:NF}. Indeed, to the best of our knowledge, all examples of contact forms $\alpha$ with finitely many closed orbits known so far are non-degenerate and lacunary and the corresponding contact manifolds are prequantizations of orbifolds admitting a Hamiltonian circle action with isolated fixed points \cite{AM5}.

More precisely, the following result computes $r_M$ in plenty of examples and furnishes a topological characterization of it when $M$ is a suitable prequantization of an orbifold. The classes listed below are not disjoint and, in fact, exhibit significant overlap. As mentioned above, to the best of our knowledge, they encompass all currently known examples of contact manifolds admitting Reeb flows with finitely many closed orbits under either condition \ref{cond:F} or \ref{cond:NF}; see Section \ref{sec:applications}.

As noted in Remark \ref{rmk:non-lacunary}, although Theorem \ref{thm:main} is stated for contact manifolds for which $\HC_*(M)$ is lacunary, it is in fact a consequence of a more general result (Theorem \ref{thm:main1}) that does not require this assumption. Accordingly, we include in the next theorem a case -- namely, case \ref{VSH} -- in which  $\HC_*(M)$ is not necessarily lacunary.

Some words are due before the statement of the next theorem. For a definition and brief discussion about toric contact manifolds we refer to Section \ref{sec:applications}. Let us just mention here that any toric contact manifold $M$ admits a toric contact form whose Reeb flow generates a locally free circle action and $M/S^1$ below refers to the quotient for any such action. We say that a (possibly degenerate) contact form has index-positivity if there are constants $c>0$ and $c'>0$ such that $\rs(\ga) > cT+c'$ for every closed Reeb orbit $\ga$, where $T$ is the period of $\ga$ and $\rs(\ga)$ is its Robbin-Salamon index \cite{RS}. In what follows, $\N$, as usual, is the set of positive integers and $\N_0=\N \cup \{0\}$.

\begin{theorem}
\label{thm:r_M}
Let $(M^{2n+1},\xi)$ be a closed contact manifold satisfying either \ref{cond:F} or \ref{cond:NF}. Suppose that it is one of the following manifolds:
\begin{itemize}
\item [\reflb{PM}{(PM)}] A prequantization $S^1$-bundle of a closed positive monotone symplectic manifold $(B,\om)$ such that $\H_{k}(B;\Q)=0$ for every odd $k$. Under \ref{cond:NF}, assume that the minimal Chern number of $B$ is bigger than one. Then $r_M = \dim \H_*(B;\Q)$.
\item [\reflb{T}{(T)}] A good toric contact manifold. Under \ref{cond:NF}, assume that it admits an index-admissible non-degenerate toric contact form. Then $r_M=\dim \H_*(M/S^1;\Q)$. The same holds using a combinatorial description of $\HC_*(M)$ given in \cite{AMM2} which does not require assumptions \ref{cond:F} or \ref{cond:NF}. In this case, $r_M \geq n+1$ with equality holding if and only if $M$ is the standard contact sphere, or more generally, a lens space.
\item [\reflb{U}{(U)}] An Ustilovsky sphere, that is, a Brieskorn manifold $\Sigma(a_0,a_1,\dots,a_{n+1})$ with $n$ even, $a_0 = \pm 1\,(\bmod\,8)$ and $a_1=a_2=\cdots=a_{n+1}=2$. Then $r_M=n+1$.
\item [\reflb{PO}{(PO)}] A prequantization $S^1$-bundle of a closed symplectic orbifold $B$ admitting a Hamiltonian circle action with isolated fixed points. Assume that the contact form $\beta$ that generates the $S^1$-action has index-positivity and, under \ref{cond:NF}, that every contractible orbit of $\beta$ has Robbin-Salamon index $\geq 4$. Then $r_M= \dim \H_*(B;\Q)$ and this number coincides with the number of fixed points of the Hamiltonian circle action on $B$.
\item [\reflb{VSH}{(VSH)}] $M$ satisfies \ref{cond:F} with the filling $W$ such that $\SH_*(W)=0$. Let $\chi(W)=\sum_{j=0}^{2n+2} (-1)^j\H^j(W;\Q)$ be the Euler characteristic of $W$ and $\b_j=\dim \H^j(W;\Q)$ be the Betti numbers. Then, given an integer $p\geq 0$ with parity different from that of $n$,
\begin{equation*}
r_M=
\begin{cases}
(n+1)\chi(W) + 2\sum_{j=-n+1}^{n+1} (-1)^j\sum_{i=1}^{\lfloor (j+n+1)/2 \rfloor} \b_{n-j+2i}\text{ if }n\text{ is odd and }p=0; \\
(n+1)\chi(W) + 2\sum_{j=-n+1}^{n+1} (-1)^j\sum_{i=1}^{\lfloor (j+n+1)/2 \rfloor} \b_{n-j+2i} + \sum_{j\in 2\N - 1} \b_j\text{ if }n\text{ is odd and }p\geq 2; \\
(n+2)\chi(W) - 2\sum_{j=-n+1}^{n+2} (-1)^j\sum_{i=1}^{\lfloor (j+n+1)/2 \rfloor} \b_{n-j+2i} + 2\sum_{j\in 2\N-1} \b_j + \sum_{j\in 2\N_0} \b_j\text{ if }n\text{ is even},
\end{cases}
\end{equation*}
where $r_M$ is given by \eqref{eq:r_M general}.
\end{itemize}
\end{theorem}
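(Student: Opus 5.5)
The plan is to treat Theorem \ref{thm:r_M} as a computation and handle the five cases separately. In each case I first determine the contact Betti numbers $b_j$, in particular the period data $P$, $\D$ and $\chi_+(M)$, and then substitute them into the formula for $r_M$. This is either the formula in Theorem \ref{thm:main} or, in case \ref{VSH}, formula \eqref{eq:r_M general}.

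\textbf{Lacunary cases.} Wherever a non-degenerate lacunary contact form with finitely many orbits is available, I would use Corollary \ref{cor:r_M lacunary} to shortcut the computation. Then $r_M$ is just the number of simple orbits of that form.

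\begin{itemize}
\item Case \ref{PO}. Perturb the periodic form $\beta$ by a lift of the Hamiltonian $H$ generating the circle action on $B$. This gives a non-degenerate form whose simple orbits are the fibres over the fixed points of $H$.
\begin{itemize}
\item Index-positivity of $\beta$ and the Robbin–Salamon index bound give \ref{cond:F}/\ref{cond:NF}-admissibility.
\item They also give the finiteness and boundedness conditions on $b_j$.
\item The indices of all iterates are computed from the weights at the fixed points. Since the fixed points are isolated, the linearized flow is elliptic, so all indices have parity $n$.
\end{itemize}
Hence the form is lacunary, and the corollary gives $r_M=\#\mathrm{Fix}(H)$. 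Equality with $\dim \H_*(B;\Q)$ is the orbifold version of the fact that a Hamiltonian circle action with isolated fixed points is equivariantly formal, i.e.\ perfect Morse–Bott theory for the moment map.
\item Case \ref{T}. A good toric contact manifold admits a non-degenerate toric Reeb vector (a generic element of the Reeb cone). Its simple orbits correspond to the edges of the moment cone, equivalently to the fixed points of the torus action on the orbifold $M/S^1$. Using the combinatorial index formula of \cite{AMM2}, these orbits are elliptic and lacunary. Hence $r_M$ equals the number of such orbits, which is $\dim \H_*(M/S^1;\Q)$. The inequality $r_M\ge n+1$ holds because a simple $(n+1)$-dimensional polytope has at least $n+1$ vertices, with equality exactly for a simplex. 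For good cones, a simplex corresponds precisely to lens spaces.
\item Case \ref{U}. Use the known non-degenerate lacunary form on Ustilovsky spheres (from \cite{Ust} and \cite{AM5}), which has exactly $n+1$ simple orbits.
\end{itemize}

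\textbf{Case \ref{PM}.} Here a direct homological computation is cleaner. Use the Gysin/Morse–Bott spectral sequence for the periodic flow, as in \cite{GGM2,AM5}. When $\H_{\mathrm{odd}}(B;\Q)=0$ and $B$ is monotone, it degenerates and gives
\[
\HC_*(M)\cong \bigoplus_{k\ge1}\H_{*-k\D+\text{shift}}(B;\Q),\qquad \D=2c_B,
\]
where $c_B$ is the minimal Chern number. Thus the homology is lacunary with parity $n$, and
\[
\chi_+(M)=\pm\frac{\dim \H_*(B;\Q)}{\D}.
\]
Plugging this into the formula for $r_M$, the sum $\sum_{j\le q} b_j$ counts contributions from the first $q/\D$ iterates, truncated. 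The truncation defect then telescopes, using the symmetry of $\H_*(B)$ from Poincaré duality (which is property \ref{cond:PD}), to give $\dim \H_*(B;\Q)$. Alternatively, one can reduce to case \ref{PO} if $B$ carries a suitable circle action, but the direct count avoids that assumption.

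\textbf{Case \ref{VSH}.} The filling $W$ has $\SH_*(W)=0$. The tautological exact sequence of \cite{BO17} between $\H_*(W,\partial W)$, $\SH^{S^1}_*(W)$ and $\HC_*(M)$ then gives
\[
\HC_*(M)\cong \H_{*+n+1}(W,\partial W;\Q)\otimes \Q[u^{-1}]
\]
up to the standard degree shift. So $b_j=\sum_{i\ge0}\b_{n+1+j-2i}$ after Lefschetz duality. This sequence is periodic with $\D=2$ and $P$ about $n+1$, and $\chi_+(M)=\pm\chi(W)/2$. Substituting into \eqref{eq:r_M general} and separating $n$ odd/even and $p=0$ versus $p\ge2$ produces the double sums in the statement after reindexing.

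\textbf{Main obstacle.} I expect the hardest step to be case \ref{PO}: verifying that the perturbed form satisfies every hypothesis, especially the orbifold index computations at singular fibres and the good/bad orbit bookkeeping, and identifying $\#\mathrm{Fix}$ with $\dim \H_*(B;\Q)$ for orbifolds. For the latter I would first try Kirwan-type equivariant formality with rational coefficients, which works for orbifolds.
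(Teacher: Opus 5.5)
Your case-by-case architecture is the same as the paper's: Corollary~\ref{cor:r_M lacunary} for \ref{T}, \ref{U} and \ref{PO}, and direct substitution of the Betti numbers for \ref{PM} and \ref{VSH}. However, two steps fail as written.

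\textbf{Periodicity in \ref{PO}.} You never prove that $\HC_*(M)$ is periodic. This is needed for $r_M$ to be defined at all, and it is a hypothesis of Corollary~\ref{cor:r_M lacunary}. Periodicity for prequantizations of orbifolds is not known in general; the paper proves it precisely for this class. It does not follow from the existence of your lacunary form. The perturbed orbits $\ga_{c,p}$ have rotation data depending on the irrational parameter $c$, so the sequences $\cz(\ga^k_{c,p})$ are in general not periodic in $k$. Hence ``computing the indices from the weights'' does not exhibit a period of $b_j$. The paper argues as follows:
\begin{itemize}
\item Work in a fixed action window $T$. For $c$ small depending on $T$, Bourgeois' Morse--Bott formula gives $\cz(\ga^k_{c,p})=\rs(\psi^k_p)+\frac12\dim S_{kT_p}-\index_p(\bH|_{S_{kT_p}})$ for all iterates of action at most $T$.
\item The filtered homology below $T$ does not depend on the small perturbation.
\item Index-positivity of $\beta$ then lets $T\to\infty$, yielding \eqref{eq:HCorb}.
\item Periodicity follows from $\rs(\psi_p^{k+l})=\rs(\psi_p^k)+(l/m_p)\ci$ and $S_{(k+l)T_p}=S_{kT_p}$ for $m_p\mid l$.
\end{itemize}

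\textbf{Betti numbers in \ref{VSH}.} Your formula for $b_j$ is wrong, for two reasons. First, you dropped the shift by one from the connecting map of the tautological triangle: $\HC_*(M)\cong\SH^{S^1,-}_{*-1}(W)$ once $\SH^{S^1}_*(W)=0$. Second, you skipped Lefschetz duality: $\dim\H_m(W,\p W)=\b_{2n+2-m}$, not $\b_m$. The correct count, as in \eqref{eq:ESH zeroSH}, is $b_j=\sum_{i\ge1}\b_{n-j+2i}$. Your $b_j=\sum_{i\ge0}\b_{n+1+j-2i}$ runs the cohomological degree the wrong way, and no degree shift repairs this in general. As written, it also gives $\chi_+(M)=(-1)^{n+1}\chi(W)/2$ instead of $(-1)^n\chi(W)/2$. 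Already for the ball it puts $\HC_*(S^{2n+1})$ in the wrong parity. So substituting it into \eqref{eq:r_M general} cannot produce the stated sums in $\b_{n-j+2i}$.

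\textbf{Smaller points.}
\begin{itemize}
\item In \ref{T}, the statement also covers the combinatorial $\HC_*(M)$ of \cite{AMM2} without \ref{cond:F} or \ref{cond:NF}. There Corollary~\ref{cor:r_M lacunary} is not available as stated. The paper computes directly from the Ehrhart data, using $b_j=cb_{j+n-2}$, $cb_{2k}=\delta_n+\dots+\delta_{n-k}$ and $\D=2$, and obtains $r_M=\sum_k(n+1-2k)\delta_k$, the number of facets of the toric diagram. Alternatively, you would have to check that the proof of Theorem~\ref{thm:main2} uses only $b_j=c_j$ and elementary counting, so that it transfers.
\item Your vertex count in \ref{T} has a dimension slip. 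The edges of the $(n+1)$-dimensional moment cone correspond to the vertices of an $n$-dimensional cross-section. That polytope has at least $n+1$ vertices, with equality iff it is a simplex; an $(n+1)$-dimensional polytope has at least $n+2$.
\item In \ref{PM}, $2c_B$ is the shift only on the contractible part. For all free homotopy classes the shift is $\bD=\rs(\psi)$ of the simple fibre.
\item Also in \ref{PM}, the ``telescoping'' you describe is exactly the Poincar\'e-duality computation of Lemma~\ref{lemma:sumb_m}. It must be combined with $b_d=2b_0+\dim\H_n(B;\Q)$ and $b_{d+p}=b_p+b_{-p}+\dim\H_{n+p}(B;\Q)$, and you still have to carry it out.
\item Your plan for $\#\mathrm{Fix}=\dim\H_*(B;\Q)$ in \ref{PO} is sound. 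The one orbifold-specific point, handled in the paper via orbifold Morse theory \cite{LT}, is that the isotropy group at a fixed point must preserve the orientation of the negative space $V_p^-$. This holds because $V_p^-$ is symplectic and the group acts symplectically.
\end{itemize}
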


\begin{remark}[The possible values that $r_M$ can take]
As discussed above, a central question in Hamiltonian Dynamics is to determine the sharp lower bound for the number of periodic orbits of a Reeb flow on a given contact manifold. In the case of the standard contact sphere $S^{2n+1}$, this bound is $n+1$. However, as the examples above illustrate, this value is in general far from sharp: in many situations one has $r_M > n+1$. (In fact, for fixed dimension $\geq 5$, one can easily construct examples with arbitrarily large $r_M$; cf.\ \cite{AM3,AGZ}.) At present, we are not aware of any example where the bound is strictly less than $n+1$. It is therefore natural to ask whether $n+1$ is the minimal possible lower bound, that is, whether $r_M \geq n+1$ for every contact manifold $M$ of dimension $2n+1$, with equality if and only if $M$ is the sphere, or more generally, a lens space. The final assertion in case \ref{T} provides evidence in support of this expectation.
\end{remark}

\begin{remark}[The hypothesis on the map $\H_1(M;\Q) \to \H_1(W;\Q)$ in case \ref{VSH}]
\label{rmk:H^1}
In case \ref{VSH} (vanishing symplectic homology) we do not need to assume that the map induced by the inclusion $\H_1(M;\Q) \to \H_1(W;\Q)$ is trivial in \ref{cond:F}. As a matter of fact, as explained in Remark \ref{rmk:homotopy} we do not need this condition if we consider only orbits contractible in $W$. But, in this case, $\HC^0_*(M)=\HC_*(M)$ due to the following. As explained in Section \ref{sec:ESH}, under \ref{cond:F}, $\HC_*(M)$ has a decomposition in free homotopy classes $a$ of $W$. So the assertion $\HC^0_*(M)=\HC_*(M)$ is equivalent to the claim that $\HC^a_*(M)=0$ for every $a\neq 0$. We have that $\SH_*(W)=0$ if, and only if, $\SH^{S^1}_*(W)=0$; see \cite{Sei} and \cite[Theorem 4.1]{BO17}. As also explained in Section \ref{sec:ESH}, besides the notation, $\HC_*(M)$ is the positive part $\SH^{S^1,+}_*(W)$ of the equivariant symplectic homology $\SH^{S^1}_*(W)$. We have the tautological exact triangle
\[
\xymatrix{
\SH^{S^1,-}_*(W)\ar[rr]& & \SH^{S^1}_*(W)\ar[ld]\\
& \HC_{*}(M)\ar[ul]^{[-1]}&
},
\]
which respects the decomposition in free homotopy classes. But $\SH^{S^1,-,a}_*(W)=0$ for every $a\neq 0$. (In other words, the negative part of $\SH^{S^1}_*(W)$ only sees contractible orbits.) Thus, if $\SH^{S^1}_*(W)=0$ then $\HC^a_*(M)=0$ for every $a\neq 0$.
\end{remark}

\begin{remark}[The parity of $p$ in case \ref{VSH}]
The reason for requiring that $p$ have parity different from that of $n$ in case \ref{VSH} is that, as already mentioned, to the best of our knowledge all currently known examples of contact manifolds $M$ admitting Reeb flows with finitely many closed orbits have the property that $\HC_*(M)$ is lacunary with parity equal to that of $n$. Nevertheless, the computation carried out in Section \ref{sec:proof r_M} can be used to determine $r_M$ in case \ref{VSH} for arbitrary values of $p$.
\end{remark}

\begin{remark}[The long formula for $r_M$ in case \ref{VSH}]
The key reason why we have such a long expression in the formula of $r_M$ in case \ref{VSH} is that we do not assume in this situation that $\HC_*(M)$ is lacunary. As noticed above, we included this case to illustrate the fact that our results can be applied to this more general setting.
\end{remark}

\begin{remark}[The hypotheses on $B$ in case \ref{PM}]
In case \ref{PM}, the assumption that the minimal Chern number of $B$ is greater than one under condition \ref{cond:NF} ensures that the contact form $\beta$ generating the $S^1$-action admits a non-degenerate index-admissible perturbation; see \cite{AM5,GGM2}. The hypothesis that $B$ is positive monotone is used to guarantee that $\beta$ is index-positive. Moreover, the monotonicity of $B$ (regardless of the value of the monotonicity constant) implies that $c_1(\xi)|_{\H_2(M;\Q)}=0$, and hence, in particular, $c_1(\xi)|_{\pi_2(M)}=0$. Consequently, as observed in Remark \ref{rmk:homotopy}, when restricting to contractible orbits, the assumption $c_1(\xi)=0$ in \ref{cond:NF} is not needed.
\end{remark}

\begin{remark}[Computation of $\HC_*(M)$ in case \ref{PO}]
In case \ref{PO} we compute $\HC_*(M)$, given by \eqref{eq:HCorb}, which implies, in particular, that $\HC_*(M)$ is periodic. It is also lacunary because $M$ admits a non-degenerate lacunary contact form; see the proof in Section \ref{sec:proof r_M}. The computation of $\HC_*(M)$ should also follow from \cite{KvK} under some additional assumptions. Our proof here,  however, is quite different and works at the chain level.
\end{remark}

\begin{remark}[The combinatorial meaning of $r_M$ in case \ref{T}]
\label{rmk:toric edges}
In case \ref{T}, the number $r_M=\dim \H_*(M/S^1;\Q)$ has a clean toric description: it is equal to the number of edges of the moment cone associated to $M$ \cite{AM0}. Equivalently, it is the number of facets of the corresponding toric diagram \cite{AM3}.
\end{remark}

\begin{remark}[Computation of $r_M$ in case \ref{PM} without assuming that $\H_{\text{odd}}(B;\Q)=0$]
\label{rmk:PM general}
Similarly as in case \ref{VSH}, we compute $r_M$ in \ref{PM} in Section \ref{sec:proof r_M} without the hypothesis that $\H_{\text{odd}}(B;\Q)=0$ obtaining \eqref{eq:r_M-PM-general}. However, we rather state the result under this hypothesis because the outcome is much cleaner and, as already mentioned, it covers the currently known examples of prequantizations of smooth manifolds admitting contact forms with finitely many closed orbits. (Note that if a symplectic manifold $B$ admits a Hamiltonian circle action then $\H_{\text{odd}}(B;\Q)=0$.)
\end{remark}

The computation of $r_M$ in the previous theorem in cases \ref{T} and \ref{U} follows from Corollary \ref{cor:r_M lacunary} together with the well-known fact that such manifolds admit non-degenerate lacunary contact forms with precisely
\[
\dim \H_*(M/S^1;\Q)\text{ and }n+1
\]
closed orbits, respectively; see \cite{AM0,Ust}. In case \ref{PM}, the same conclusion holds provided that $B$ admits a Hamiltonian circle action with isolated fixed points; see, for instance, \cite{AM5}, as well as the proof of Theorem \ref{thm:r_M} in case \ref{PO} given in Section \ref{sec:proof r_M}, where we show that in case \ref{PO} there exist non-degenerate lacunary contact forms with $\dim \H_*(M/S^1;\Q)$ closed orbits. The general case of \ref{PM} requires a separate argument, which will be presented in Section \ref{sec:proof r_M}.

In the same section, we also provide an alternative proof in case \ref{T} that does not require the assumption that $(M,\xi)$ satisfies either \ref{cond:F} or \ref{cond:NF}, using a combinatorial description of $\HC_*(M)$ introduced in \cite{AMM2}. In a similar vein, we give a short alternative proof in the case \ref{PM} $\cap$ \ref{T} (i.e. a prequantization of a toric manifold) when $[\omega]=c_1(TB)$. Finally, the proof in case \ref{VSH}, also presented in Section \ref{sec:proof r_M}, is a consequence of the computation of $\HC_*(M)$ in this setting, due to \cite{BO17}.

By Theorem \ref{thm:r_M} one can see that Theorem \ref{thm:main} (see also Theorems \ref{thm:main1} and \ref{thm:main2}) is a powerful generalization and strengthening of the main results from \cite{AM5,GGM2}, where only prequantizations of symplectic manifolds are considered. Moreover, in \cite{GGM2} there is a restrictive assumption on the minimal Chern number of the basis.

Motivated by the results in this work, we conjecture that every closed contact manifold admitting contact forms with finitely many simple closed orbits is a prequantization of an orbifold, and that the minimal number of such orbits is given by $\dim \H_*(M/S^1;\Q)$; see Section \ref{sec:questions}. Therefore, Theorems \ref{thm:main} and \ref{thm:r_M} represent a step towards a positive answer to this conjecture and Theorem \ref{thm:r_M} also shows how this bound can be seen from $\HC_*(M)$, in terms of the deviation of the sum of the contact Betti numbers from the corresponding asymptotic average, as explained before.

\subsection{Multiplicity of non-hyperbolic closed orbits}
\label{sec:non-hyp}

Now we will show some results about the multiplicity of non-hyperbolic closed orbits when the contact form has finitely many geometrically distinct closed orbits which are byproducts of the proof of Theorem \ref{thm:main}. Recall that a closed orbit is hyperbolic if its linearized Poincar\'e map has no eigenvalues on the unit circle. In what follows, $\#\P^{\text{non-hyp}}_\alpha$ stands for the number of simple non-hyperbolic closed orbits of $\alpha$.

\begin{theorem}
\label{thm:non-hyp}
Let $(M^{2n+1},\xi)$ be a closed contact manifold satisfying either \ref{cond:F} or \ref{cond:NF}. Assume that $\HC_*(M)$ is periodic, lacunary with parity $\del$, $b_j \neq 0$ for some $j$, $b_j = 0$ for all $j$ sufficiently negative, and $b_j < \infty$ for all $j \in \Z$.

Let $\alpha$ be a non-degenerate contact form on $M$. Assume that there exists an integer $p \neq \del\,(\bmod\,2)$, which we assume to be equal to $1$ if $p$ is odd, such that $\alpha$ satisfies the following conditions:
\begin{itemize}
\item $\alpha$ has finitely many simple closed orbits;
\item $\alpha$ has no good periodic orbit $\ga$ such that $\cz(\ga)=\pm p$;
\item every closed orbit of $\alpha$ has non-vanishing mean index;
\item under \ref{cond:NF}, $\alpha$ is index-admissible. 
\end{itemize}
If $p>1$ assume that $M$ satisfies \ref{cond:PD}. Then
\[
\#\P^{\text{non-hyp}}_\alpha \geq r_M^{\text{non-hyp}}, 
\]
where
\begin{equation*}
r_M^{\text{non-hyp}}:=
\begin{cases}
2(-q\chi_+(M) - \sum_{j=-\infty}^q b_j) \text{ if } \del\text{ is odd} \\
2(q\chi_+(M) - \sum_{j=-\infty}^{q} b_j) + 2b_0 \text{ if } \del\text{ is even,}
\end{cases}
\end{equation*}
and $q=\min\{s\D;\,s\D \geq P,\,s \in \N\}$.
\end{theorem}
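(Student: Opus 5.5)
The proof reruns the counting argument behind Theorem \ref{thm:main}, now tracking hyperbolic and non-hyperbolic simple orbits separately. The extra information comes from the finiteness of $\P_\alpha$. With finitely many simple orbits $\ga_1,\dots,\ga_r$, all with $\mi(\ga_i)\neq 0$, the chain complex $\CC_*(\alpha)$ of good orbits has finite rank in each degree. We then use the resonance relation from the periodicity of $\HC_*(M)$, namely
\[
\sum_i \frac{\sigma(\ga_i)}{\mi(\ga_i)} = \chi_+(M),
\]
summed over the orbits with positive mean index. Here $\sigma(\ga)$ is the mean Euler characteristic contribution of $\ga$: it equals $(-1)^{\cz(\ga)}$ if the even iterates of $\ga$ are good, and $\tfrac12(-1)^{\cz(\ga)}$ if they are bad. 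There is also a similar relation at $-\infty$.

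\textbf{Key dichotomy.} For a hyperbolic orbit $\ga$ the index is additive, $\cz(\ga^k)=k\,\cz(\ga)$, so $\mi(\ga)=\cz(\ga)\in\Z\setminus\{0\}$. Its good iterates are then spaced exactly $|\cz(\ga)|$ apart in degree. Moreover, if $\cz(\ga)$ is odd, then $\ga$ and all its iterates have parity $\not\equiv$ that of the even ones only when bad iterates are dropped; so negative hyperbolic orbits contribute only odd multiples. The step I would isolate is this. Via the index recurrence theorem (common index jump) chosen at a degree $q$ that is a multiple of $\D$, each non-hyperbolic simple orbit contributes to the degree window $[q-?,q+\D]$ exactly as in the proof of Theorem \ref{thm:main}, namely with weight two. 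Each hyperbolic orbit, by contrast, has its contribution to the window $[-q,q]$ fixed by linearity, and that contribution cancels exactly against its share of the asymptotic average $q\chi_+(M)$. In other words, a hyperbolic orbit contributes $k\cdot \mathrm{sign}$ to both sides of the Morse-type inequality comparing $\sum_{j\le q} b_j$ with $q\chi_+(M)$. It therefore drops out of the estimate that yields $2(\pm q\chi_+(M)-\sum_{j\le q}b_j)$.

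\textbf{Steps.}
\begin{enumerate}
\item Apply the Morse inequalities for $\CC_*(\alpha)$ truncated at degree $q$, using the hypothesis that no good orbit has index $\pm p$. This absence splits the complex at $\pm p$, and for $p=1$ no \ref{cond:PD} is needed. We obtain
\[
\sum_{j\le q}\dim\CC_j \ge \sum_{j\le q}b_j
\]
together with the alternating-sum refinement.
\item Count the left side orbit by orbit. For a non-hyperbolic orbit, the number of good iterates with index in $(p,q]$ is $\approx q/|\mi|$ up to an error controlled by index recurrence. For a hyperbolic orbit the count is exact.
\item Insert the resonance identity. The terms $q/\mi$ reproduce $\mp q\chi_+(M)$, and the errors for non-hyperbolic orbits are bounded by $2$ each. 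This gives
\[
2\#\P^{\text{non-hyp}}_\alpha \ge 2(\mp q\chi_+-\sum_{j\le q} b_j)+\text{(correction)}.
\]
\item Identify the correction. For $\del$ odd it vanishes. For $\del$ even, the $b_q$ term of Theorem \ref{thm:main} is replaced by $2b_0$. The reason is that with $p$ odd, i.e. $p=1$, the degree-$0$ window sits between the excluded indices $\pm1$. Orbits of index $0$ are forced by $b_0$, and they are necessarily non-hyperbolic once their iterates cannot cross $\pm1$ with nonzero mean index. Equivalently, a hyperbolic orbit of index $0$ would have $\mi=0$, which is excluded.
\end{enumerate}

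\textbf{Main obstacle.} The hardest step is step 3: showing that hyperbolic orbits cancel \emph{exactly}, including bad iterates of negative hyperbolic orbits, where the factor $\tfrac12$ enters. This must be checked against the good-iterate count at the boundary degree $q$, which requires choosing $q$ large and a multiple of $\D$ and also a multiple of every $2\cz(\ga)$ for hyperbolic $\ga$. I would first try to enlarge $q$ to such a common multiple, which is allowed because $r_M$ is independent of $q$ by the periodicity remark. If that fails, I would handle the boundary terms by hand.
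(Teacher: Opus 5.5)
\textbf{There is a genuine gap at your Step~3, and it is exactly where the factor $2$ in $r_M^{\text{non-hyp}}$ comes from.}

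A truncation at a single jump degree $d$ from Theorem~\ref{thm:IRT} only registers orbits whose jump iterate overshoots. In \eqref{eq:contributions2} the per-orbit error is $\chi(\ga_i^{k_i})\in\{0,\pm1\}$, and it enters only when $\sigma_i\cz(\ga_i^{k_i})>d+p$. Orbits with $\sigma_i\cz(\ga_i^{k_i})\le d+p$, in particular all those lying below $\sigma_i d$, contribute nothing. So one window plus the Morse inequalities gives a lower bound of roughly $(-1)^{\del}q\chi_+(M)-\sum_{j\le q}b_j$ for the number of overshooting orbits. That is about half of $r_M^{\text{non-hyp}}$.

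Your own display $2\#\P^{\text{non-hyp}}_\alpha\ge 2\bigl((-1)^{\del}q\chi_+(M)-\sum_{j\le q}b_j\bigr)+\dots$ likewise only yields $\#\P^{\text{non-hyp}}_\alpha\ge r_M^{\text{non-hyp}}/2$ when $\del$ is odd. The ``weight two'' you attribute to non-hyperbolic orbits is never justified.

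The missing idea is to use \emph{both} sequences of Theorem~\ref{thm:IRT}. The second vector $(d',k_1',\dots,k_r')$ satisfies \ref{cond:iii}, i.e. $\sigma_i\cz(\ga_i^{k_i'})-d'=-(\sigma_i\cz(\ga_i^{k_i})-d)$. Repeating the deficit count at $d'$ therefore produces equally many orbits with $\sigma_i\cz(\ga_i^{k_i})<d-p$. This set is disjoint from the first, and that is how the bound $2(\dots)$ arises.

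\textbf{You also misidentify the main obstacle.} No exact cancellation of hyperbolic contributions against $q\chi_+(M)$ needs to be arranged by enlarging $q$. Property \ref{cond:i} of Theorem~\ref{thm:IRT} already forces $\cz(\ga_i^{k_i})=\sigma_i d$ for every hyperbolic $\ga_i$. So $d=k_i|\cz(\ga_i)|$ is automatically a multiple of $2|\cz(\ga_i)|$, since $k_i$ is even, and together with Lemma~\ref{lemma:resonance} this also handles the factors $1/2$ from bad iterates.

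Hence every orbit in the two disjoint sets above is non-hyperbolic. So is every orbit with $\cz(\ga_i^{k_i})=\pm d+j$, $0<|j|\le p$, from Step~3 of the proof of Theorem~\ref{thm:main1}. The only orbits lost relative to Theorem~\ref{thm:main1} are the $c_d+c_{-d}-2c_0$ ones with $\cz(\ga_i^{k_i})=\pm d$.

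For $p=1$ this is what replaces $b_q$ by $2b_0$. The $2b_0$ is the term $2\sum_{j=-1}^{1}(-1)^jc_j=2c_0\ge 2b_0$ in the doubled lower bound for $r^{e,1}_\pm$. It comes from the pairs $\ga_i^{k_i\pm\ell}$ with $\cz(\ga_i^\ell)=0$, not from a separate count of index-zero orbits as in your Step~4.

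Finally, Steps~2--3 count iterates without signs ($\approx q/|\mi|$ per orbit), but the resonance relation only controls the signed sum $\sum_i\hat\chi(\ga_i)/\mi(\ga_i)$. Since $\alpha$ is not assumed lacunary, you must work with $\sum_m(-1)^mc_m$ over $[-d-p,d+p]$, whose endpoints have the parity of $p$ so that the Morse inequalities point the right way. For even $p\ge 2$ you also need \ref{cond:PD} to rewrite $2\sum_{j=1}^p(-1)^jb_{d+j}$.
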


As in Theorem \ref{thm:main}, Theorem \ref{thm:non-hyp} follows from a more general result, Theorem \ref{thm:non-hyp-general}, which does not assume that $\HC_*(M)$ is lacunary, presented in Section \ref{sec:proofmain}.

Combining Theorems \ref{thm:main} and \ref{thm:non-hyp} we obtain the following corollary.

\begin{corollary}
\label{cor:non-hyp-lacunary}
Let $(M^{2n+1},\xi)$ be a closed contact manifold satisfying either \ref{cond:F} or \ref{cond:NF}. Assume that $\HC_*(M)$ is periodic, $b_j \neq 0$ for some $j$, $b_j = 0$ for all $j$ sufficiently negative, and $b_j < \infty$ for all $j \in \Z$.

Let $\alpha$ be a non-degenerate lacunary contact form on $M$ with parity $\del$. Under \ref{cond:NF}, suppose that $\alpha$ is index-admissible. Then the following assertions hold:
\begin{itemize}
\item If $\del$ is odd then every closed orbit of $\alpha$ is non-hyperbolic.
\item If $\del$ is even then $\alpha$ has at least $2(q\chi(M) - \sum_{j=-\infty}^{q} b_j) + 2b_0$ non-hyperbolic closed orbits.
\end{itemize}
\end{corollary}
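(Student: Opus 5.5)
The plan is to obtain the corollary directly from Theorems \ref{thm:main} and \ref{thm:non-hyp}, after checking that a non-degenerate lacunary $\alpha$ meets their hypotheses.

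\emph{Step 1: hypotheses on $M$.} Since $\alpha$ is non-degenerate and lacunary with parity $\del$, Section \ref{sec:ESH} shows that $\HC_*(M)$ is lacunary with the same parity $\del$. This parity is well defined because $b_j\neq 0$ for some $j$. The existence of a non-degenerate lacunary form also gives condition \ref{cond:PD}, by its third clause.

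\emph{Step 2: hypotheses on $\alpha$.} I would take $p=1$ if $\del$ is even and $p=0$ if $\del$ is odd, so that $p\not\equiv\del \pmod 2$. Because $\alpha$ is lacunary, every closed orbit has index of parity $\del$, so no orbit (good or not) has index $\pm p$. Remark \ref{rmk:unnecessary orbits 2} shows that lacunarity together with $b_j<\infty$ forces every closed orbit to have non-zero mean index. Index-admissibility under \ref{cond:NF} is assumed in the corollary.

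\emph{Step 3: finiteness of $\#\P_\alpha$.} This is needed for Theorem \ref{thm:non-hyp}. By Theorem \ref{thm:main}, lacunarity gives $\#\P_\alpha=r_M<\infty$.

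\emph{Step 4: comparing the two bounds.} Theorem \ref{thm:non-hyp} now gives $\#\P^{\text{non-hyp}}_\alpha\ge r_M^{\text{non-hyp}}$.
\begin{itemize}
\item If $\del$ is odd, then $r_M^{\text{non-hyp}}$ coincides with the formula for $r_M$. Hence $\#\P^{\text{non-hyp}}_\alpha\ge r_M=\#\P_\alpha$, and every simple orbit is non-hyperbolic. An iterate of a non-hyperbolic orbit is again non-hyperbolic, so every closed orbit is non-hyperbolic.
\item If $\del$ is even, the conclusion is exactly the bound of Theorem \ref{thm:non-hyp}. Here $\chi(M)$ is read as $\chi_+(M)$.
\end{itemize}

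\emph{Main obstacle.} The real content lies in Theorems \ref{thm:main} and \ref{thm:non-hyp}, so the corollary itself is bookkeeping. The only delicate points are:
\begin{itemize}
\item justifying that the choice $p=1$ is allowed by Theorem \ref{thm:non-hyp} when $\del$ is even;
\item confirming that \ref{cond:PD} is not needed when $p\le 1$;
\item the mean-index argument, which needs the orbits with zero mean index to contribute to $\CC_*(\alpha)$ with no cancellation. This holds because all differentials vanish for lacunary forms, since consecutive degrees never both occur.
\end{itemize}
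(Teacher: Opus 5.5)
Your proposal is correct and follows the paper's route: the paper obtains the corollary by combining Theorem \ref{thm:main}, which for a lacunary $\alpha$ gives $\#\P_\alpha=r_M<\infty$, with Theorem \ref{thm:non-hyp} (with $p=1$ when $\del$ is even, and an even $p$ when $\del$ is odd, in which case $r_M^{\text{non-hyp}}=r_M$), exactly as you do. The paper also remarks that the odd case needs none of this machinery, since a hyperbolic orbit $\ga$ has $\cz(\ga^2)=2\cz(\ga)$, which is even and hence impossible for an odd lacunary form.
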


Note that the first assertion is trivial since the index of a hyperbolic orbit is multiplicative under iterations which implies, in particular, that it is even for even iterates. So the real contribution of the previous corollary is in the case where $\del$ is even. To the best of our knowledge, in all currently known examples of contact forms with finitely many closed orbits every periodic orbit is elliptic.

\begin{remark}
\label{rmk:2b_0 leq b_q}
It follows from Theorem \ref{thm:main} and Corollary \ref{cor:non-hyp-lacunary} that if $M$ admits a non-degenerate lacunary contact form as in Corollary \ref{cor:non-hyp-lacunary} then $2b_0 \leq b_q$. It is obvious if $\del$ is odd but it is not if $\del$ is even. This inequality can be strict, with $\del$ even, as the example of the standard contact sphere shows.
\end{remark}

\subsection{Applications in examples}
\label{sec:applications}

In this section we present applications of Theorems \ref{thm:main} and \ref{thm:r_M} to several examples. Although these applications are immediate from these theorems, we will state them for the reader's convenience and to illustrate the wide range of contact manifolds that we can deal with. Moreover, we will focus mainly on examples that admit non-degenerate lacunary contact forms and therefore the lower bound provided by Theorem \ref{thm:main} is sharp. In these examples, we illustrate the fact, established in Theorem \ref{thm:r_M}, that the  homological bound $r_M$ coincides with the topological expected count, namely the sum of the Betti numbers of the quotient orbifold. We will consider only the multiplicity of closed orbits since the corresponding results about non-hyperbolic orbits are clear from Theorem \ref{thm:non-hyp} and left to the reader.

First, we consider (good) toric contact manifolds admitting fillings as in \ref{cond:F}. Toric contact manifolds are the odd-dimensional analogues of toric symplectic manifolds. They can be defined as contact manifolds of dimension $2n+1$ equipped with an effective Hamiltonian action of a torus of dimension $n+1$. In dimension three, the \emph{good} toric contact manifolds are $(S^3, \xi_{\rm st})$ and its finite quotients. In higher dimensions, good toric contact manifolds are precisely the compact toric contact manifolds for which the torus action is not free. These constitute the most important class of compact toric contact manifolds and admit a classification in terms of their associated moment cones, in direct analogy with Delzant's theorem, which classifies compact toric symplectic manifolds via their moment polytopes. We refer to \cite{Le,AM0} for further details. In what follows, we will omit sometimes the adjective ``good'', as it will be understood implicitly.

Toric contact manifolds $M^{2n+1}$ admit several examples with a symplectic filling $W$ as in \ref{cond:F}. These manifolds are prequantizations of (toric) symplectic orbifolds (i.e. they have toric Reeb flows that generate a locally free circle action) and admit non-degenerate lacunary contact forms with parity $n\,(\bmod\,2)$. We have that $\HC_*(M)$ is periodic, with period $\D=2$, lacunary, and satisfies all the properties of Theorem \ref{thm:main} \cite{AM0,AM3,AMM1,AMM2}. Concrete examples of such manifolds are the standard contact sphere, toric contact manifolds in dimension 3 and 5 \cite{AM3}, and the prequantization of any toric symplectic manifold $(B,\om)$ with $[\om]=c_1(TB)$, although there are plenty of other examples. Thus, we obtain the following result.

\begin{corollary}
Let $(M^{2n+1},\xi)$ be a closed good toric contact manifold admitting a filling $W$ as in \ref{cond:F}. For instance, $M$ can be the standard contact sphere, any good toric contact manifold with $c_1(\xi)=0$ in dimensions 3 and 5, and the prequantization of any toric symplectic manifold $(B,\om)$ with $[\om]=c_1(TB)$. Let $\alpha$ be a non-degenerate contact form on $M$. Assume that there exists an integer $p \neq n\,(\bmod\,2)$ such that $\alpha$ satisfies the following conditions:
\begin{itemize}
\item $\alpha$ has no good periodic orbit $\ga$ such that $\cz(\ga)=\pm p$;
\item every closed orbit of $\alpha$ has non-vanishing mean index.
\end{itemize}
Then
\[
\#\P_\alpha \geq \dim \H_*(M/S^1;\Q)
\]
where the $S^1$-action is the one induced by any periodic toric Reeb flow on $M$. Moreover, the equality holds if and only if $\alpha$ is lacunary.
\end{corollary}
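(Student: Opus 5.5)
This corollary is a direct specialization of Theorem \ref{thm:main} combined with case \ref{T} of Theorem \ref{thm:r_M}, so the proof amounts to verifying their hypotheses. I would proceed in three steps.

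\emph{First,} I check the standing assumptions on $M$. Condition \ref{cond:F} holds by hypothesis. By \cite{AM0,AM3,AMM1,AMM2}, a good toric contact manifold admits a toric contact form whose Reeb flow generates a locally free circle action. This flow has a non-degenerate toric perturbation which is lacunary with parity $n\,(\bmod\,2)$ and has precisely $\dim \H_*(M/S^1;\Q)$ simple closed orbits; all of them are elliptic and have positive mean index. The following properties then come from the computation of $\HC_*(M)$ in those references, or from the chain complex of this lacunary form, whose differential vanishes:
\begin{itemize}
\item $\HC_*(M)$ is periodic with $\D=2$;
\item $\HC_*(M)$ is lacunary with parity $\del=n\,(\bmod\,2)$;
\item $b_j<\infty$ for all $j$, and $b_j=0$ for $j$ sufficiently negative;
\item $b_j\neq 0$ for some $j$.
\end{itemize}
Finiteness of the $b_j$ and their vanishing in very negative degrees follow from finiteness of the orbit set together with non-vanishing, positive mean indices. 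Non-vanishing of some $b_j$ holds because a lacunary form has at least one orbit and no cancellation is possible.

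\emph{Second,} I check condition \ref{cond:PD}, which is needed only when $p>1$. It holds for two independent reasons: $\D=2$, and $M$ admits a non-degenerate lacunary contact form. The hypotheses on $\alpha$ in the corollary are exactly those of Theorem \ref{thm:main} with $\del=n\,(\bmod\,2)$, since index-admissibility is not required under \ref{cond:F}.

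\emph{Third,} Theorem \ref{thm:main} gives $\#\P_\alpha\geq r_M$, with equality if and only if $\alpha$ is lacunary. By Theorem \ref{thm:r_M}, case \ref{T} (equivalently, Corollary \ref{cor:r_M lacunary} applied to the toric lacunary form above), $r_M=\dim\H_*(M/S^1;\Q)$. This quantity is independent of the chosen periodic toric Reeb flow, since $r_M$ is defined intrinsically from $\HC_*(M)$.

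For the listed examples, I would note that the standard sphere, and the prequantizations of toric $B$ with $[\om]=c_1(TB)$, have Liouville fillings with $c_1=0$: the ball, respectively the anticanonical disk bundle. For toric contact manifolds in dimensions 3 and 5 with $c_1(\xi)=0$, the filling is provided by the toric fillings of \cite{AM3}.

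The only potentially delicate point is confirming the lacunarity and periodicity properties of $\HC_*(M)$ with respect to the filling $W$. I would resolve this through the remark following Corollary \ref{cor:r_M lacunary}: when a non-degenerate lacunary form exists, $\HC_*(M)$ is independent of the filling and is computed from that form.
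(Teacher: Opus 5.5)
Your proposal is correct and follows the paper's own route. You use the non-degenerate lacunary toric form, together with the cited computations of $\HC_*(M)$, to check every hypothesis of Theorem \ref{thm:main}, including \ref{cond:PD}, with $\del=n\,(\bmod\,2)$. You then identify $r_M=\dim\H_*(M/S^1;\Q)$ through case \ref{T} of Theorem \ref{thm:r_M}, that is, via Corollary \ref{cor:r_M lacunary}.

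One small slip in your side remark on the examples. For a prequantization of $(B,\om)$ with $[\om]=c_1(TB)$, the filling is the disk bundle of the canonical bundle $K_B$, not the anticanonical bundle. Its total space has $c_1=0$ because $c_1(K_B)=-c_1(TB)$. It is a strong but not Liouville filling, since the zero section is a symplectic submanifold with $[\om]\neq 0$; a strong filling is all that \ref{cond:F} requires anyway.
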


\begin{remark}[Combinatorial description of $r_M$]
As noticed in Remark \ref{rmk:toric edges}, the number $r_M=\dim \H_*(M/S^1;\Q)$ in the previous corollary has a clean toric description: it is equal to the number of edges of the moment cone associated to $M$ \cite{AM0}, or, equivalently, the number of facets of the corresponding toric diagram \cite{AM3}.
\end{remark}

\begin{remark}[The hypotheses on the first homology groups]
Good toric contact manifolds $M$ satisfy $\H_1(M;\Q)=0$. Consequently, the hypotheses in \ref{cond:F} and \ref{cond:NF} involving $\H_1(W;\Q)$ and $\H_1(M;\Q)$ always hold in this case.
\end{remark}

\begin{remark}[Orbits with negative mean index]
It was recently proved in \cite{CGG2} that a non-degenerate contact form on the standard contact sphere $S^{2n+1}$ with finitely many closed orbits cannot possess periodic orbits with negative mean index. In particular, such a contact form cannot have periodic orbits with index less than or equal to $-n$. All the examples of contact forms with finitely many closed orbits that we are aware of satisfy the property that every periodic orbit has positive mean index.
\end{remark}

\begin{remark}[The period $\Delta$ in the toric case]
\label{rmk:period two}
The fact that $\D=2$ for toric contact manifolds $M$ is proved in \cite{AMM2} using a combinatorial definition of $\HC_*(M)$ that does not require conditions \ref{cond:F} or \ref{cond:NF} and that coincides with positive equivariant symplectic homology whenever one of these conditions is satisfied. However, in several examples toric contact manifolds admit (toric) symplectic fillings $W$ as in \ref{cond:F} such that $\SH_*(W)=0$; cf. \cite[Lemma 6.10]{AMM2}. In this case, the fact that $\D=2$ can be easily seen in the following way. As proved in \cite{BO17}, there is a Gysin exact triangle
\[
\xymatrix{
\SH^+_*(W)\ar[rr]& & \HC_*(M)\ar[ld]\\
& \HC_{*-2}(M)\ar[ul]^{[+1]}&
},
\]
where $\SH^+_*(W)$ stands for the positive non-equivariant symplectic homology of $W$ which is related to $\SH_*(W)$ via the tautological exact triangle
\[
\xymatrix{
\SH^-_*(W)\ar[rr]& & \SH_*(W)\ar[ld]\\
& \SH^+_{*}(M)\ar[ul]^{[-1]}&
},
\]
with $\SH^-_*(W)$ being the negative non-equivariant symplectic homology of $W$ which is isomorphic to $\H_{*+n+1}(W,M;\Q)$. Therefore, $\SH^-_k(W)=0$ for sufficiently large degrees $k$ and consequently, from the last triangle, we conclude that $\SH^+_{*}(M)=0$ for such degrees whenever $\SH_*(W)=0$. Hence, it follows from the former triangle that $\D=2$.
\end{remark}

We now turn to examples admitting fillings $W$ as in \ref{cond:F} for which symplectic homology $\SH_*(W)$ does not vanish. These examples are not toric in general. Before stating the result, we recall some definitions.

A connected Riemannian manifold $N$ is called a symmetric space if, for every $p \in N$, there exists an isometry $f_p: N \to N$ such that $f_p(p)=p$ and $f_p \circ \exp_p(v)=\exp_p(-v)$ for every $v \in T_pN$. The rank of a symmetric space $N$ is the maximal dimension of a flat, totally geodesic submanifold of $N$. By the classification of symmetric spaces, a compact rank one symmetric space (CROSS) is one of the following: $S^m$, $\RP^m$, $\CP^m$, $\HP^m$, or $\CaP^2$; see \cite{Bes} for details.

The unit cosphere bundle $S^*N$ of a CROSS $N$ admits a natural filling given by the unit codisk bundle $D^*N \subset T^*N$, which satisfies condition \ref{cond:F}. Moreover, it is well known that $\SH_*(D^*N)\neq 0$.

Every CROSS $N^{n+1}$ carries a Riemannian metric for which all geodesics are periodic with the same minimal period. Equivalently, the geodesic flow induces a free circle action on $S^*N$. To the best of our knowledge, CROSSes are the only known examples of closed manifolds admitting such a metric \cite{Bes}. It follows that the unit cosphere bundle $S^*N$ is a prequantization of a closed symplectic manifold $(B,\omega)$. Furthermore, a homological computation shows that $\H_k(B;\Q)=0$ for every odd $k$; see \cite[page 141]{Zil}. In this case, the total rank $r_M=\dim \H_*(S^*N/S^1;\Q) = \dim \H_*(B;\Q)$ is given by Table \ref{tab:cross}.

\begin{table}[tb]
\centering
\begin{tabular}{ | c | c | c | }
\hline
CROSS & $r_M = \dim \H_*(S^*N/S^1;\Q)$ \\
\hline
\rule{0pt}{0.4cm}
$S^{m}$ or $\RP^m$ \text{with} $m>2$ \text{even} & $m$ \\
$S^{m}$ or $\RP^m$ \text{with} $m$ \text{odd} & $m+1$ \\
$\CP^{m}$ & $m(m+1)$ \\
$\HP^m$ & $2m(m+1)$ \\
$\CaP^2$ & $24$ \\
\hline
\end{tabular}
\vskip .2cm
\caption{The total rank of $\dim \H_*(S^*N/S^1;\Q)$ for a CROSS $N$.}
\label{tab:cross}
\end{table}

From \eqref{eq:ESHpreq} we can easily see that $\HC_*(S^*N)$ satisfies all the assumptions of Theorem \ref{thm:main} and the parity of $\HC_*(S^*N)$ is $n\,(\bmod\,2)$. (Note that we take $N$ with dimension $n+1$ in order to have $S^*N$ with dimension $2n+1$.) Moreover, it is well known that $S^*N$ admits lacunary non-degenerate contact forms \cite{AM5,Zil}. Therefore, we obtain the following result which improves previous results from \cite{AM5,GGM2,DLLW2}.

\begin{corollary}
Let $N^{n+1}$ be a CROSS. Let $\alpha$ be a non-degenerate contact form on $S^*N$. Assume that there exists an integer $p \neq n\,(\bmod\,2)$ such that $\alpha$ satisfies the following conditions:
\begin{itemize}
\item $\alpha$ has no good periodic orbit $\ga$ such that $\cz(\ga)=\pm p$;
\item every closed orbit of $\alpha$ has non-vanishing mean index.
\end{itemize}
Then 
\[
\#\P_\alpha \geq \dim \H_*(S^*N/S^1;\Q)
\]
where $\dim \H_*(S^*N/S^1;\Q)$ is given by Table \ref{tab:cross}.  Moreover, the equality holds if and only if $\alpha$ is lacunary.
\end{corollary}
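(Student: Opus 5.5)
The statement is the CROSS corollary, and the plan is to deduce it from Theorem \ref{thm:main}, Corollary \ref{cor:r_M lacunary} and Theorem \ref{thm:r_M}, case \ref{PM}. Set $M=S^*N$ with the filling $W=D^*N$.

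The first task is to check the hypotheses on $M$.

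\emph{Condition \ref{cond:F}.} We need $c_1(TD^*N)=0$, which holds because $T(T^*N)$ is the complexification of $TN$. We also need the map $\H_1(S^*N;\Q)\to\H_1(D^*N;\Q)$ to be trivial. Every CROSS of dimension $n+1\ge 2$ has $\H_1(N;\Q)=0$, and $D^*N\simeq N$, so this map is zero. (If one wants to include $S^1=S^m$ for $m=1$, one instead restricts to contractible orbits as in Remark \ref{rmk:homotopy}.)

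\emph{Properties of $\HC_*(M)$.} Periodicity, lacunarity with parity $\del=n\,(\bmod\,2)$, $b_j=0$ for $j$ very negative, finiteness of each $b_j$, and $b_j\ne0$ for some $j$ are all read off from the computation \eqref{eq:ESHpreq}. That formula expresses $\HC_*$ of a prequantization as shifted copies of $\H_*(B;\Q)$ tensored with $\Q[u]$. This gives lacunarity because $\H_{\rm odd}(B;\Q)=0$, and the shifts are governed by $2c_1$ together with the Maslov data. I expect this bookkeeping, namely matching the grading shift with the parity of $n$, to be the one step needing care. A cleaner route avoids it: use the existence of a non-degenerate lacunary contact form on $S^*N$ (\cite{AM5,Zil}), which already forces $\HC_*(M)$ to be lacunary with the same parity, as recalled in Section \ref{sec:ESH}. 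That form is a perturbation of the periodic geodesic flow whose orbits are all elliptic, so its parity is $n$.

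\emph{Condition \ref{cond:PD}.} This holds by the third bullet of its definition, since $M$ admits a non-degenerate lacunary form. Hence the restriction $p>1$ in Theorem \ref{thm:main} imposes nothing extra.

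Next, apply Theorem \ref{thm:main} to $\alpha$ with the given $p\ne n\equiv\del\,(\bmod\,2)$. This gives $\#\P_\alpha\ge r_M$, with equality if and only if $\alpha$ is lacunary. Since \ref{cond:F} holds, the index-admissibility condition, which is only needed under \ref{cond:NF}, is void.

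It remains to identify $r_M$. The geodesic flow of the Besse metric makes $S^*N$ a prequantization of $B=S^*N/S^1$, and $\H_{\rm odd}(B;\Q)=0$ by \cite{Zil}.

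\emph{First route.} Invoke Corollary \ref{cor:r_M lacunary}: $r_M$ equals the number of simple orbits of the known lacunary non-degenerate form. That number is $\dim\H_*(B;\Q)$, realized as perturbation orbits corresponding to a perfect Morse function on $B$, or to the fixed points of a Hamiltonian circle action on $B$.

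\emph{Second route.} Invoke case \ref{PM} of Theorem \ref{thm:r_M}. For this, check that $B$ is positive monotone, which follows from homogeneity (these $B$ are homogeneous Kähler manifolds, e.g. the Grassmannian of oriented 2-planes for spheres).

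Either way $r_M=\dim\H_*(S^*N/S^1;\Q)$. The values in Table \ref{tab:cross} come from standard Betti number computations for these homogeneous spaces; for example, the quadric $Q_{m-1}$ has total Betti number $m$ or $m+1$ according to the parity of $m$. The main obstacle is ensuring that the parity and filling-independence issues are rigorously settled, which I would handle through the lacunary form as above.
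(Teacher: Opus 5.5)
Your proposal follows essentially the paper's own route: take $W=D^*N$ in \ref{cond:F}, get periodicity and lacunarity of parity $n\,(\bmod\,2)$ for $\HC_*(S^*N)$ from \eqref{eq:ESHpreq} and the known non-degenerate lacunary (Katok--Ziller) forms, apply Theorem \ref{thm:main}, and identify $r_M=\dim\H_*(B;\Q)$ via case \ref{PM} of Theorem \ref{thm:r_M} or Corollary \ref{cor:r_M lacunary}. Three of your justifications need repair, though none changes the route: (i) positive monotonicity of $B$ is required for \eqref{eq:ESHpreq}, hence in both of your routes, and it does not follow from homogeneity when $b_2(B)\geq 2$ (e.g.\ $B=S^2\times S^2$ for $N=S^3$, or $B=F(1,2;m+1)$ for $N=\CP^m$), but it does follow from the Gysin sequence, since $\pi^*c_1(TB)=c_1(\xi)$ vanishes rationally and hence $c_1(TB)\in\R[\om]$, with positive coefficient because the fibres have positive Robbin--Salamon index; (ii) a globally non-degenerate lacunary form with $\dim\H_*(B;\Q)$ simple orbits must come from a Hamiltonian circle action on $B$ as in case \ref{PO}, not from a perfect Morse function, whose Morse--Bott perturbation is non-degenerate only below an action threshold; and (iii) complexification gives only $2c_1(TD^*N)=0$, and for $N=\RP^{2k}$ the class is a nonzero $2$-torsion class, a case the paper's own appeal to \ref{cond:F} also passes over.
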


Another example arising from unit cosphere bundles is given when the Riemannian manifold $N$ is a lens space. More precisely, let $q \geq 1$ be an integer and consider the $\Z_q$-action on the odd-dimensional sphere $S^{2m+1} \subset \C^{m+1}$ generated by the map
\[
\psi(z_0, \dots, z_m) = \left(e^{\frac{2\pi i \ell_0}{q}}z_0, e^{\frac{2\pi i \ell_1}{q}}z_1, \dots, e^{\frac{2\pi i \ell_m}{q}}z_m \right),
\]
where $\ell_0, \ldots, \ell_m$ are integers, called the weights of the action. This action is free whenever the weights are coprime with $q$ (which we assume from now on), and in this case the quotient of $S^{2m+1}$ by $\Z_q$ is a lens space, denoted by $\L$.

If $S^{2m+1}$ is endowed with the round metric, it induces a Riemannian metric on $N=\L$. The corresponding geodesic flow is again periodic; however, in contrast to the case of a CROSS, it typically induces only a \emph{locally} free circle action on $S^*N$. The manifold $S^*N$ admits a filling satisfying \ref{cond:F}, namely the unit codisk bundle. In \cite{AM5}, we compute $\HC_*(S^*N)$ (more precisely, the positive equivariant symplectic homology for contractible orbits $\HC^0_*(S^*N)$; the computation readily extends to the general case). Alternatively, one may view $S^*N$ as a prequantization of an orbifold $B$, given by a finite quotient of the Grassmannian of oriented two-planes $G^+_2(\R^{2m+2})$, and apply the computation \eqref{eq:HCorb} from the proof of case \ref{PO} in Theorem \ref{thm:r_M}. In either approach, one finds that $S^*N$ satisfies the assumptions of Theorem \ref{thm:main}, with $\HC_*(S^*N)$ lacunary of even parity and
\[
r_M = \dim \H_*(S^*N/S^1;\Q) = 2m+2.
\]
Moreover, $M$ admits a non-degenerate lacunary contact form, as shown in \cite{AM5} or in Section \ref{sec:proof r_M}. (The equality $r_M=2m+2$ also follows from the fact that this contact form has precisely $2m+2$ closed orbits.) We thus obtain the following corollary, which improves earlier results from \cite{AM5}.

\begin{corollary}
Let $N^{2m+1}$ be a lens space $\L$, and let $\alpha$ be a non-degenerate contact form on $S^*N$. Assume that there exists an odd integer $p$ such that $\alpha$ satisfies the following conditions:
\begin{itemize}
\item $\alpha$ has no good periodic orbit $\ga$ with $\cz(\ga)=\pm p$;
\item every closed orbit of $\alpha$ has non-vanishing mean index.
\end{itemize}
Then
\[
\#\P_\alpha \geq \dim \H_*(S^*N/S^1;\Q) = 2m+2.
\]
Moreover, the equality holds if and only if $\alpha$ is lacunary.
\end{corollary}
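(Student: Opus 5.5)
The statement immediately above is the last corollary, for the unit cosphere bundle of a lens space $N=\L$. My plan is to deduce it directly from Theorem \ref{thm:main}, using the explicit computation of $\HC_*(S^*N)$. I will verify the hypotheses of Theorem \ref{thm:main} in turn, with $M=S^*N$ of dimension $2n+1=4m+1$, so that $n=2m$.

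\textbf{Step 1 (hypotheses on $M$).} First I check condition \ref{cond:F}. The filling is the unit codisk bundle $W=D^*N$. It has $c_1(TW)=0$, since $T(T^*N)$ is the complexification of $TN$. Moreover $\H_1(N;\Q)=0$ because $\pi_1(N)=\Z_q$ is finite, so $\H_1(M;\Q)\to \H_1(W;\Q)=0$ is trivially the zero map.

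Next I identify $M$ as a prequantization of the orbifold $B=G^+_2(\R^{2m+2})/\Z_q$. The torus action on $\C^{m+1}$ commutes with $\psi$, so it descends to $B$ and yields a Hamiltonian circle action with isolated fixed points. This puts $M$ in case \ref{PO} of Theorem \ref{thm:r_M}. The formula \eqref{eq:HCorb} then shows that $\HC_*(M)$ is periodic, has finite $b_j$, vanishes in very negative degrees, and is nonzero. The argument of case \ref{PO} also produces a non-degenerate lacunary contact form, so $\HC_*(M)$ is lacunary. The same form gives \ref{cond:PD}, so no separate check is needed when $p>1$.

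For the parity, the perturbed orbits are elliptic, so their indices are $\equiv n=2m \equiv 0 \pmod 2$. Hence $\del$ is even. The index-positivity of the geodesic contact form follows from positive curvature, since Morse indices grow linearly with length.

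\textbf{Step 2 (hypotheses on $\alpha$).} The corollary assumes an odd $p$, so $p\neq\del \pmod 2$, together with the two orbit conditions. These are exactly the hypotheses that Theorem \ref{thm:main} places on $\alpha$. Theorem \ref{thm:main} therefore gives $\#\P_\alpha\ge r_M$, with equality if and only if $\alpha$ is lacunary.

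\textbf{Step 3 (computing $r_M$).} By Corollary \ref{cor:r_M lacunary}, $r_M$ equals the number of simple orbits of the lacunary form constructed above, which is the number of fixed points of the circle action on $B$. By case \ref{PO} this number is $\dim \H_*(B;\Q)$. Since $\H_*(B;\Q)$ is the $\Z_q$-invariant part of $\H_*(G^+_2(\R^{2m+2});\Q)$, and $\Z_q$ acts through a connected group and hence trivially, we get
\[
r_M=\dim \H_*(G^+_2(\R^{2m+2});\Q)=\chi(\widetilde{G}_2(\R^{2m+2}))=2m+2.
\]

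\textbf{Main obstacle.} The delicate point is Step 1: confirming that the locally free (rather than free) action still satisfies the index-positivity hypothesis of \ref{PO}. I would handle this through the Morse index growth of iterated great circles, where the index of the $k$-th iterate is approximately $2km$.
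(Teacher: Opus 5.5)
Your proposal is correct and follows the paper's own route: you verify the hypotheses of Theorem \ref{thm:main} for $S^*N$ using the filling $D^*N$ and the orbifold computation \eqref{eq:HCorb}, which show that $\HC_*$ is periodic and lacunary of even parity, with the non-degenerate lacunary form also supplying \ref{cond:PD}; you then get $r_M=2m+2$ from Corollary \ref{cor:r_M lacunary} as the orbit count of that form. The paper leaves implicit why $B=G^+_2(\R^{2m+2})/\Z_q$ carries a Hamiltonian circle action with isolated fixed points, and descending the commuting torus action, as you do, is a reasonable way to fill that in: a generic circle fixes the $2m+2$ oriented coordinate lines $\C_j$.
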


Another example is provided by the Ustilovsky spheres $M$ discussed in Theorem \ref{thm:r_M}, which satisfy \ref{cond:F}; see \cite{KvK,Ust}. They admit non-degenerate lacunary contact forms with even parity. Their homology $\HC_*(M)$ is periodic with period $\D \neq 2$ in general \cite{Ust}. Therefore, in this case, $M$ does not admit a filling $W$ such that $\SH_*(W)=0$; see Remark \ref{rmk:period two}. (Note that $\HC_*(M)$ is a contact invariant since $M$ admits non-degenerate lacunary contact forms.) Hence, by Theorems \ref{thm:main} and \ref{thm:r_M}, we obtain the following corollary.

\begin{corollary}
Let $M^{2n+1}$ be an Ustilovsky sphere, that is, a Brieskorn manifold $\Sigma(a_0,a_1,\dots,\break a_{n+1})$ with $n$ even, $a_0 = \pm 1\,(\bmod\,8)$ and $a_1=a_2=\cdots=a_{n+1}=2$. Let $\alpha$ be a non-degenerate contact form on $M$. Assume that there exists an odd integer $p$ such that $\alpha$ satisfies the following conditions:
\begin{itemize}
\item $\alpha$ has no good periodic orbit $\ga$ such that $\cz(\ga)=\pm p$;
\item every closed orbit of $\alpha$ has non-vanishing mean index.
\end{itemize}
Then 
\[
\#\P_\alpha \geq n+1.
\]
Moreover, the equality holds if and only if $\alpha$ is lacunary.
\end{corollary}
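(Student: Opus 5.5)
The plan is to derive this corollary directly from Theorem \ref{thm:main} together with Theorem \ref{thm:r_M}, after checking that the Ustilovsky spheres fit the hypotheses of both.

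\textbf{Step 1: the homology.} By \cite{KvK,Ust}, an Ustilovsky sphere $M=\Sigma(a_0,2,\dots,2)$ with $n$ even and $a_0=\pm1\,(\bmod\,8)$ satisfies \ref{cond:F}, with its natural Milnor-fiber filling. Since $n$ is even, the condition $\H_1(M;\Q)=0$ is automatic because $M$ is a homotopy sphere, and $c_1$ vanishes on the filling. Ustilovsky's computation gives the following:
\begin{itemize}
\item $\HC_*(M)$ is periodic with some even period $\D$.
\item It is lacunary with even parity $\del$.
\item $b_j<\infty$ for all $j$.
\item $b_j=0$ for $j$ sufficiently negative.
\item $b_j\neq0$ for some $j$.
\end{itemize}
Moreover, $M$ admits a non-degenerate lacunary contact form (the perturbed Brieskorn form), so \ref{cond:PD} holds by its third clause. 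The $p>1$ case of Theorem \ref{thm:main} is therefore covered.

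\textbf{Step 2: the form.} Take $\alpha$ as in the statement. Since $\del$ is even and $p$ is odd, we have $p\neq\del\,(\bmod\,2)$. The two remaining hypotheses on $\alpha$ are exactly those of Theorem \ref{thm:main}, and the index-admissibility clause is irrelevant under \ref{cond:F}. Theorem \ref{thm:main} then gives $\#\P_\alpha\ge r_M$, with equality if and only if $\alpha$ is lacunary.

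\textbf{Step 3: identify $r_M$.} Case \ref{U} of Theorem \ref{thm:r_M} gives $r_M=n+1$. Its justification is Corollary \ref{cor:r_M lacunary} applied to Ustilovsky's non-degenerate lacunary form, which has exactly $n+1$ simple closed orbits. This form is independent of $\alpha$.

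The only point needing care is Step 1, namely confirming that the cited computations give every hypothesis on $\HC_*(M)$ with the grading normalization used here. In particular, the lacunary parity must come out even, consistent with $n$ even and the orbits being elliptic. Everything else is bookkeeping.
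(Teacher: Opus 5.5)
Your proposal is correct and matches the paper's argument: check that the Ustilovsky sphere satisfies \ref{cond:F} and the homological hypotheses of Theorem \ref{thm:main}, with \ref{cond:PD} coming from the existence of a non-degenerate lacunary form, then apply that theorem and identify $r_M=n+1$ via case \ref{U} of Theorem \ref{thm:r_M}, i.e.\ Corollary \ref{cor:r_M lacunary} applied to Ustilovsky's form with exactly $n+1$ simple orbits. The parity-normalization point you flag is settled as in the paper: since $M$ admits a non-degenerate lacunary form of even parity, $\HC_*(M)$ is automatically lacunary with even parity (Section \ref{sec:ESH}).
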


Finally, we present a last example which, although not explicit, encompasses all the previous ones and, to the best of our knowledge, all currently known examples of contact manifolds admitting contact forms with finitely many closed orbits for which equivariant symplectic homology is a well-defined invariant with integer grading.

More precisely, we consider a prequantization $S^1$-bundle $M^{2n+1}$ over a closed symplectic \emph{orbifold} admitting a Hamiltonian circle action with isolated fixed points. As proved in Section \ref{sec:proof r_M}, $\HC_*(M)$ is periodic and lacunary, with parity $n \,(\bmod\,2)$. Moreover, $M$ admits a non-degenerate lacunary contact form whose number of closed orbits coincides with the number of fixed points of the Hamiltonian circle action, which, in turn, is equal to $\dim \H_*(M/S^1;\Q)$. Therefore, we obtain the following immediate consequence of Theorem \ref{thm:main} together with case \ref{PO} in Theorem \ref{thm:r_M}.

\begin{corollary}
Let $M^{2n+1}$ be a prequantization $S^1$-bundle of a closed symplectic orbifold $B$ admitting a Hamiltonian circle action with isolated fixed points. Assume that $M$ satisfies \ref{cond:F} or \ref{cond:NF}, that the contact form $\beta$ that generates the $S^1$-action has index-positivity and, under \ref{cond:NF}, that every contractible orbit of $\beta$ has Robbin-Salamon index $\geq 4$.

Let $\alpha$ be a non-degenerate contact form on $M$. Assume that there exists an integer $p \neq n\,(\bmod\,2)$ such that $\alpha$ satisfies the following conditions:
\begin{itemize}
\item $\alpha$ has no good periodic orbit $\ga$ such that $\cz(\ga)=\pm p$;
\item every closed orbit of $\alpha$ has non-vanishing mean index;
\item under \ref{cond:NF}, $\alpha$ is index-admissible. 
\end{itemize}
Then
\[
\#\P_\alpha \geq \dim \H_*(B;\Q),
\]
where $\dim \H_*(B;\Q)$ coincides with the number of fixed points of the Hamiltonian circle action on $B$. Moreover, the equality holds if and only if $\alpha$ is lacunary.
\end{corollary}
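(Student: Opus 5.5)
The corollary follows by combining Theorem \ref{thm:main} with case \ref{PO} of Theorem \ref{thm:r_M}. The first step is to check that $M$ satisfies every hypothesis imposed on the manifold in Theorem \ref{thm:main}.

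\begin{itemize}
\item \emph{Periodicity, lacunarity, finiteness.} From the computation \eqref{eq:HCorb} of $\HC_*(M)$ carried out in the proof of case \ref{PO}, $\HC_*(M)$ is periodic. It vanishes in sufficiently negative degrees, and each $b_j$ is finite. Here I would use index-positivity of $\beta$: the Robbin--Salamon indices of the iterates grow at least linearly in the period. So only finitely many orbit families of $\beta$ contribute in each degree, and none contribute in very negative degrees.
\item \emph{Lacunarity with parity $n\,(\bmod\,2)$.} The Hamiltonian circle action on $B$ with isolated fixed points lets one perturb $\beta$ to a non-degenerate contact form $\alpha_0$. Its closed orbits are the iterates of the fibers over the fixed points, and all of them are elliptic. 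Ellipticity gives $\cz\equiv n\,(\bmod\,2)$, so $\alpha_0$ is lacunary. As recalled in Section \ref{sec:ESH}, $\HC_*(M)$ is then lacunary with the same parity $\del=n\,(\bmod\,2)$.
\item \emph{Nonvanishing and \ref{cond:PD}.} $\HC_*(M)\neq 0$ because $B$ has at least one fixed point. Condition \ref{cond:PD} holds through its third alternative, since $\alpha_0$ is a non-degenerate lacunary contact form. Hence the restriction ``if $p>1$'' in Theorem \ref{thm:main} costs nothing.
\item \emph{Index-admissibility of $\alpha_0$ under \ref{cond:NF}.} This needs $\alpha_0$ to be index-admissible. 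Here I use the assumption that every contractible orbit of $\beta$ has Robbin--Salamon index $\geq 4$. A small perturbation changes indices of the orbits in a degenerate family by at most half the family's nullity, roughly $n-1$. Thus contractible orbits of $\alpha_0$ have index $>3-n$.
\end{itemize}

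With these facts, Theorem \ref{thm:main} applies to the given $\alpha$ with the prescribed $p\not\equiv\del$. It gives $\#\P_\alpha\geq r_M$, with equality if and only if $\alpha$ is lacunary. Case \ref{PO} of Theorem \ref{thm:r_M} identifies $r_M=\dim\H_*(B;\Q)$, which equals the number of fixed points of the circle action.

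One can also see this last equality directly. By Corollary \ref{cor:r_M lacunary}, $r_M=\#\P_{\alpha_0}$, which is the number of fixed points. Since the action on the orbifold $B$ is Hamiltonian, rational equivariant formality gives $\dim\H_*(B;\Q)=\dim\H_*(B^{S^1};\Q)$, and this is again the number of fixed points.

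The main obstacle is the construction of $\alpha_0$ and the control of its indices. One has to verify that the perturbation built from the Hamiltonian $H$ generating the action on $B$ has as its only orbits the iterates over the fixed points. This requires excluding the new orbits that could appear in the orbifold strata. One also has to check that these orbits are elliptic with nonzero mean index. I would take $\alpha_0=(1+\epsilon\pi^*H)\beta$ with $\epsilon$ small and irrational relative to the isotropy weights. For each prescribed action bound, the only closed orbits with action below that bound are fibers over critical points of $H$. Their linearized return maps are rotations with the weights of the action, which gives ellipticity. Index-positivity of $\beta$ forces their mean indices to be positive.
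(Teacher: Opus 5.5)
Your top-level route is the paper's. The corollary is Theorem \ref{thm:main} combined with case \ref{PO} of Theorem \ref{thm:r_M}, and the hypotheses are checked using a non-degenerate lacunary form built from $\bH$. The gap is in the step you flag as the main obstacle: $\alpha_0=(1+\ep\pi^*\bH)\beta$ does \emph{not} have the fibres over the fixed points as its only closed orbits.

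Write $H=\pi^*\bH$, and let $X_H=HR_\beta+X^h_{\bH}$ be the contact lift of the action, normalized as in the proof of case \ref{PO} to be a period-one circle action commuting with $R_\beta$. The formula $R_{\beta/G}=GR_\beta+X^h_{\overline G}$ with $G=1/(1+\ep H)$ gives
\[
R_{(1+\ep H)\beta}=\frac{1+2\ep H}{(1+\ep H)^2}\,R_\beta-\frac{\ep}{(1+\ep H)^2}\,X_H .
\]
The coefficient ratio $-(1+2\ep H)/\ep$ is constant on level sets of $H$ but varies with the level. On every level where it is rational, all points off the fixed fibres lie on closed orbits filling $T^2$-orbits. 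These are degenerate Morse--Bott families with period of order $1/\ep$. So for every $\ep$ the form $\alpha_0$ is degenerate and has infinitely many simple closed orbits; already on $S^3$ it is the boundary of a non-ellipsoidal toric domain.

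Your action-filtered statement is true, but it is not what you use. You invoke $\alpha_0$ as a genuinely non-degenerate lacunary form in three places:
\begin{itemize}
\item for the third alternative of \ref{cond:PD}, which is needed when $p>1$;
\item for index-admissibility under \ref{cond:NF};
\item for Corollary \ref{cor:r_M lacunary}, which via Theorem \ref{thm:main2} needs global non-degeneracy.
\end{itemize}
None of these follows from the filtered statement.

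The repair is the paper's choice: divide instead of multiply. Take $\beta/(1+c\pi^*\bH)$, i.e.\ $(1+f_c)\beta$ with $cH=-f_c/(1+f_c)$. Its Reeb field is exactly $R_\beta+cX_H$, a constant-coefficient combination of two commuting circle generators.
\begin{itemize}
\item For irrational $c$, the only closed orbits are the fibres over the fixed points.
\item These orbits are elliptic, hence have index $\equiv n\,(\bmod\,2)$.
\item They are non-degenerate for suitable $c$.
\end{itemize}
Bourgeois' Morse--Bott formula then gives $\cz(\ga^k_{c,p})=\rs(\psi^k_p)+\tfrac12\dim S_{kT_p}-\mathrm{index}_p(\bH|_{S_{kT_p}})$. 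The shift here can be as large as $n$, not $n-1$, since $\dim S_{kT_p}=2n$ for multiples of $m_p$. Still, $\rs\geq 4$ gives $\cz\geq 4-n>3-n$ under \ref{cond:NF}.

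With this substitution the rest of your verification goes through and matches the paper. Your identification of $\dim\H_*(B;\Q)$ with the number of fixed points via equivariant formality and localization is a valid alternative to the paper's perfect Morse function argument on the orbifold (Lerman--Tolman).
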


When $(B,\om)$ is a smooth manifold such that $[\om]=c_1(TB)$ and $\H_{\mathrm{odd}}(B;\Q)=0$, a prequantization $M$ of $(B,\om)$ satisfies \ref{cond:F} and all the conditions of Theorem \ref{thm:main} with $\del = n \,(\bmod\,2)$. (The filling is given by the disk bundle of the associated complex line bundle over $B$. As explained in Remark \ref{rmk:H^1 preqorb} below, we have that $\H_1(M;\Q)=0$.) Therefore, we have the following version of the previous corollary in the smooth case.

\begin{corollary}
Let $M^{2n+1}$ be a prequantization $S^1$-bundle of a closed symplectic manifold $(B,\om)$ such that $[\om]=c_1(TB)$ and $\H_{\mathrm{odd}}(B;\Q)=0$.

Let $\alpha$ be a non-degenerate contact form on $M$. Assume that there exists an integer $p \neq n\,(\bmod\,2)$ such that $\alpha$ satisfies the following conditions:
\begin{itemize}
\item $\alpha$ has no good periodic orbit $\ga$ such that $\cz(\ga)=\pm p$;
\item every closed orbit of $\alpha$ has non-vanishing mean index.
\end{itemize}
Then
\[
\#\P_\alpha \geq \dim \H_*(B;\Q).
\]
Moreover, the equality holds if and only if $\alpha$ is lacunary.
\end{corollary}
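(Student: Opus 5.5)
The corollary follows directly from Theorem \ref{thm:main} combined with case \ref{PM}/\ref{PO} of Theorem \ref{thm:r_M}; most of the work is checking hypotheses.

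\emph{Step 1: condition \ref{cond:F}.} Let $W$ be the disk bundle of the complex line bundle $L\to B$ with $c_1(L)=-[\om]$ (the convention making $M=\partial W$ concave at infinity in the right sense). Then $W$ is a strong filling. Since $TW\cong \pi^*(TB\oplus L)$, we get $c_1(TW)=\pi^*(c_1(TB)-[\om])=0$. By Remark \ref{rmk:H^1 preqorb}, $\H_1(M;\Q)=0$; alternatively, the Gysin sequence gives this, because $\H_1(B;\Q)=0$ and cup product with $[\om]$ is injective on $\H^0$. So the hypothesis on $\H_1(M;\Q)\to\H_1(W;\Q)$ holds trivially.

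\emph{Step 2: hypotheses on $\HC_*(M)$.} Since $[\om]=c_1(TB)$, the base $B$ is positive monotone, and $\H_{\mathrm{odd}}(B;\Q)=0$. We are therefore in case \ref{PM} of Theorem \ref{thm:r_M}; under \ref{cond:F} the minimal Chern number condition is not needed. The standard computation (\eqref{eq:ESHpreq}, Morse--Bott spectral sequence for the periodic form $\beta$) gives
\[
\HC_*(M)\cong\bigoplus_{k\ge1}\H_{*-\mathrm{shift}(k)}(B;\Q),
\]
with shifts growing linearly in $k$. Since $B$ has only even homology and the shifts are $\equiv n \pmod 2$, this shows:
\begin{itemize}
\item[(i)] $\HC_*(M)$ is lacunary with parity $\del=n\,(\bmod\,2)$;
\item[(ii)] $\HC_*(M)$ is periodic with period $\D=2N$, where $N$ is the minimal Chern number;
\item[(iii)] $\HC_*(M)$ is bounded below, each $b_j$ is finite, and $\HC_*(M)\neq0$.
\end{itemize}
Condition \ref{cond:PD} holds by its second bullet, because $B$ is positive monotone with $\H_{\mathrm{odd}}=0$. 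So the case $p>1$ is also covered.

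\emph{Step 3: apply the theorems.} The assumptions on $\alpha$ are exactly those of Theorem \ref{thm:main}, with $p\neq\del$ since $\del\equiv n$. Theorem \ref{thm:main} then gives $\#\P_\alpha\ge r_M$, with equality if and only if $\alpha$ is lacunary. Case \ref{PM} of Theorem \ref{thm:r_M} gives $r_M=\dim\H_*(B;\Q)$, which completes the proof.

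The only delicate point is Step 1: fixing the sign convention for $L$ so that $c_1(TW)=0$ and the disk bundle is genuinely a strong filling. Everything else is a direct citation.
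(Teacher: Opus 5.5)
Your proposal is correct and is essentially the paper's own argument: the paper notes that the disk bundle of the associated line bundle is a filling as in \ref{cond:F}, that $\H_1(M;\Q)=0$ by the Gysin argument of Remark \ref{rmk:H^1 preqorb}, and that all hypotheses of Theorem \ref{thm:main} hold with $\del=n\,(\bmod\,2)$; the result then follows from Theorem \ref{thm:main} together with case \ref{PM} of Theorem \ref{thm:r_M}. The only minor imprecision is in your item (ii): by \eqref{eq:ESHpreq} the full $\HC_*(M)$ has period $\bD$, the Robbin--Salamon index of the fiber (which equals $2$ when $[\om]=c_1(TB)$), whereas $2c_B$ is the period of the part generated by orbits contractible in $M$; since $2c_B$ is a multiple of $\bD$ here, your statement is still true and nothing else in the argument changes.
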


\begin{remark}[The hypotheses on the first homology groups in \ref{cond:F} and \ref{cond:NF}]
\label{rmk:H^1 preqorb}
To the best of our knowledge, all currently known examples of closed contact manifolds $M$ admitting contact forms with finitely many closed orbits satisfy $\H_1(M;\Q)=0$. In fact, as mentioned above, all examples we are aware of arise as prequantizations of orbifolds $B$ admitting a Hamiltonian circle action with isolated fixed points.  Such an action implies that $\H_{\mathrm{odd}}(B;\Q)=0$ (see Section \ref{sec:proof r_M}), and in particular $\H^1(B;\Q)=0$.

Applying the Gysin sequence to the natural $S^1$-bundle $M \times ES^1 \to M_{S^1} := (M \times ES^1)/S^1$, and using that $\H^*_{S^1}(M;\Q) = \H^*(M_{S^1};\Q) \cong \H^*(B;\Q)$ and $\H^*(M \times ES^1;\Q) \cong \H^*(M;\Q)$ we obtain
\[
\cdots \to 0=\H^1(B;\Q) \to \H^1(M;\Q) \to \H^0(B;\Q) \xrightarrow{\wedge\om} \H^2(B;\Q) \to \cdots\, .
\]
Since the map $\H^0(B;\Q) \xrightarrow{\wedge\om} \H^2(B;\Q)$ is injective, it follows that $\H^1(M;\Q)=0$.

Therefore, in particular, the hypotheses in \ref{cond:F} and \ref{cond:NF} involving $\H_1(W;\Q)$ and $\H_1(M;\Q)$ always hold in this case.
\end{remark}

\subsection{Organization of the paper}

The remainder of this paper is organized as follows. Section \ref{sec:prelim} provides the necessary background. More specifically, Section \ref{sec:ESH} contains a brief review of positive equivariant symplectic homology, while the corresponding Morse inequalities are discussed in Section \ref{sec:morseineq}. The resonance relations for equivariant symplectic homology are presented in Section \ref{sec:resonance}. In Section \ref{sec:IRT}, we state the index recurrence theorem, which serves as a key tool in the proofs of Theorems \ref{thm:main} and \ref{thm:non-hyp}.

In Section \ref{sec:proofmain}, we formulate three general results -- Theorems \ref{thm:main1}, \ref{thm:main2}, and \ref{thm:non-hyp-general} -- which, in particular, imply Theorems \ref{thm:main} and \ref{thm:non-hyp}. Theorems \ref{thm:main1} and \ref{thm:non-hyp-general} are proved in Section \ref{sec:proofmain1}, with Theorem \ref{thm:non-hyp-general} arising as a byproduct of the proof of Theorem \ref{thm:main1}, while Theorem \ref{thm:main2} is established in Section \ref{sec:proofmain2}. Section \ref{sec:proof r_M} is devoted to the proof of Theorem \ref{thm:r_M}. Finally, Section \ref{sec:questions} presents several questions concerning contact manifolds and contact forms with finitely many closed orbits.

\subsection{Acknowledgments}

The second author is grateful to Fr\'ed\'eric Bourgeois, Viktor Ginzburg, Alex Oancea and Baptiste Serraille for useful discussions and comments on a preliminary version of this work.

\section{Preliminaries}
\label{sec:prelim}

\subsection{Equivariant symplectic homology}
\label{sec:ESH}
In this section we briefly recall several facts about positive equivariant symplectic homology, presenting the subject from a slightly unconventional perspective, following \cite{AM5,GGM2}.

First, let $(M,\xi)$ be a closed contact manifold and $(W,\Omega)$ be a strong symplectic filling of $M$ with $c_1(TW)=0$. Usually, we also ask that $\Om$ is atoroidal, that is, $\int f^*\Omega = 0$ for any smooth map $f: T^2 \to W$, but this condition can be dropped using the universal Novikov field 
\[
\Nov = \bigg\{\sum_{i=1}^\infty n_iT^{a_i}\ ;\, a_i \in \R,\, a_i\to\infty,\, n_i\in \Q\bigg\},
\]
together with an action filtration introduced by McLean and Ritter \cite{MR}; cf. \cite[Section 2]{AGKM}.

Since $c_1(TW)=0$, the determinant line bundle $\Lambda_\C^{n+1}TW$ is trivial, and we can therefore choose a non-vanishing section $\sec$, which determines a trivialization of $\Lambda_\C^{n+1}TW$. Given a periodic orbit $\ga$ of a Hamiltonian $H \colon W \to \R$, this section can be used to symplectically trivialize $\ga^*TW$ and hence define its Conley-Zehnder index $\cz(\ga)$; see \cite[Section 3]{AM3}, \cite{AM5}, \cite{Es}, \cite{McL2}, and \cite[Appendix C]{MR}. The index is well-defined up to the homotopy class of $\sec$. If $\H^1(W;\Q)=0$, then it is in fact independent of the choice of $\sec$, since any two such sections are homotopic; see \cite[Lemma 4.3]{McL2}. Moreover, for contractible orbits, this trivialization agrees (up to homotopy) with the one induced by a capping disk. In particular, in this case the index does not depend on the choice of $\sec$, and the assumption $\H^1(W;\Q)=0$ can be dropped. More generally, the index does not depend on the choice of $\sec$ for orbits that are homologically trivial (taking rational coefficients) in $W$; see \cite[Lemma 4.3]{McL2} and \cite[Remark C.2]{MR}.

A crucial point throughout this work is that this trivialization is \emph{closed under iterations}: the trivialization induced on $\ga^k$ coincides, up to homotopy, with the $k$-th iterate of the trivialization along $\ga$. In general, this property fails if one instead fixes a trivialization over a reference loop in each free homotopy class.

Let $\alpha$ be a non-degenerate contact form on $M$ supporting the contact structure $\xi$. Since $c_1(\xi)=0$, as ensured by either condition \ref{cond:F} or \ref{cond:NF}, one can define the index of every closed orbit of $\alpha$ by choosing a non-vanishing section $\sec$ of $\Lambda_\C^{n}\xi$, as above. This index is well-defined up to the homotopy class of $\sec$, and it is independent of this choice whenever $\H^1(M;\Q)=0$, since any two such sections are then homotopic. As before, this assumption can be dropped when restricting to orbits that are homologically trivial (over $\Q$) in $M$.

Recall that a periodic orbit $\ga$ of $\alpha$ is \emph{good} if its index has the same parity of the index of the underlying simple closed orbit. Then the positive equivariant symplectic homology $\SH_*^{S^1,+}(W)$ with coefficients in $\Nov$ is the homology of a complex $\CC_*(\alpha)$ generated by the good closed Reeb orbits of $\alpha$; see \cite[Proposition 3.3]{GG}. (More precisely, \cite[Proposition 3.3]{GG} is proved for contractible orbits assuming that $\Om|_{\pi_2(W)}=0$ and using $\Q$-coefficients but its proof can be readily adapted to our context since it is purely algebraic; cf. \cite[Section 2]{AGKM}.) This complex is graded by the Conley-Zehnder index and filtered by the action introduced in \cite{MR}. It turns out that, although the filling in \ref{cond:F} is not exact in general, we can assume, without loss of generality, that the filtration in $\CC_*(\alpha)$ is given by the period of the Reeb orbits. Furthermore, once we fix a free homotopy class of loops in $W$, the part of $\CC_*(\alpha)$ generated by closed Reeb orbits in that class is a subcomplex. As a consequence, the entire complex $\CC_*(\alpha)$ decomposes into a direct sum of such subcomplexes indexed by free homotopy classes of loops in $W$.

The differential of the complex $\CC_*(\alpha)$, but not its homology, depends on several auxiliary choices, and the nature of the differential is not essential for our purposes. The complex $\CC_*(\alpha)$ is functorial in $\alpha$ in the sense that a symplectic cobordism equipped with a suitable extra structure gives rise to a map of complexes. For the sake of brevity and to emphasize the obvious analogy with contact homology, we denote the homology of $\CC_*(\alpha)$ by $\HC_*(M)$ rather than $\SH_*^{S^1,+}(W)$. Given a subset of free homotopy classes $\Gamma$ in $W$ and $T \in (0,\infty]$ we denote by $\HC_*^{\Gamma,T}(\alpha)$ the homology of the corresponding subcomplex $\CC_*^{\Gamma,T}(\alpha)$ with free homotopy classes in $\Gamma$ and action less than $T$. If $\Gamma$ is the set of all free homotopy classes we omit it in the notation. However, it is worth keeping in mind that $\CC_*^{\Gamma,T}(\alpha)$ and possibly even the homology may depend on the choice of the filling $W$. Note that if $T<\infty$ then $\HC_*^{\Gamma,T}(\alpha)$ depends on $\alpha$ but $\HC_*^{\Gamma,\infty}(\alpha)=\HC_*^{\Gamma}(M)$ does not. Moreover, if $M$ admits a non-degenerate lacunary contact form then $\HC_*(M)$ does not depend on the choice of filling (i.e. it is a contact invariant) since the differential vanishes due to index reasons.

This description of the positive equivariant symplectic homology as the homology of $\CC_*(\alpha)$ is not quite standard, but it is most suitable for our purposes. (We refer the reader to \cite{GG} for more details and further references and to \cite{BO09,BO10, BO13a, BO13b, BO17,Vit} for the original construction of the equivariant symplectic homology.) To see why $\HC_*(M):=\SH_*^{S^1,+}(W)$ can be obtained as the homology of a single complex generated by good closed Reeb orbits, let us first consider an admissible Hamiltonian $H$ on the symplectic completion of $W$ and focus on the orbits of $H$ with positive action. Such orbits are in a one-to-one correspondence with closed Reeb orbits $\gamma$ with action below a certain threshold $T$ depending on the slope of $H$. The $S^1$-equivariant Floer homology of $H$ is the homology of a Floer-type complex obtained from a non-degenerate parametrized perturbation of $H$; \cite{BO13b,Vit}. This complex is filtered by the action. (Here we are using the action filtration introduced by McLean-Ritter \cite{MR}.) The $E^1$-term of the resulting spectral sequence (over $\Nov$) is generated by the good Reeb orbits of $\alpha$ with action below $T$. Now we can (canonically, once the generators are fixed) reassemble the differentials $\p_r$ into a single differential $\p$ on $\CC_*(H):=E^1_{*,*}$ in such a way that the homology of the resulting complex is $E^\infty=\HF_*^{S^1,+}(H)$. Roughly speaking, $\p=\p_1+\p_2+\ldots$, where $\p_r$ is suitably ``extended'' from $E^r$ to $E^1$. Moreover, this procedure respects the action filtration and is functorial with respect to continuation maps. Passing to the limit in $H$, we obtain the complex $\CC_*(\alpha)$ as the limit of the complexes $\CC_*(H)$; see \cite[Sections 2.5 and 3]{GG} for further details.

A remarkable observation by Bourgeois and Oancea in \cite[Section 4.1.2]{BO17} is that under suitable additional assumptions on the indices of closed Reeb orbits the positive equivariant symplectic homology is defined even when $M$ does not have a symplectic filling. To be more precise, we assume that $c_1(\xi)=0$ and consider non-degenerate contact forms $\alpha$ on $M$ such that all of its closed contractible Reeb orbits have Conley-Zehnder index strictly greater than $3-n$. Furthermore, under this assumption the proof of \cite[Proposition 3.3]{GG} carries over essentially word-for-word, and hence again the positive equivariant symplectic homology of $M$ can be described as the homology of a complex $\CC_*(\alpha)$ generated by good closed Reeb orbits of $\alpha$, graded by the Conley-Zehnder index and filtered by the action. The complex breaks down into the direct sum of subcomplexes indexed by free homotopy classes of loops \emph{in} $M$. As in the fillable case, we will use the notation $\HC_*(M)$ and $\HC_*^{\Gamma,T}(\alpha)$. Note that in general, in spite of the notation, this homology has slightly different properties (and hypothetically could be different) from the homology defined via a filling. For instance, it has a decomposition by the free homotopy classes of loops in $M$, but not in $W$ as when $M$ is fillable. (Intuitively, one can think of the resulting homology as defined by using a non-compact filling of $M$ by the bounded part $M\times (0,\,1]$ of the symplectization of $(M,\xi)$.)

\subsection{Morse inequalities}
\label{sec:morseineq}

Given a non-degenerate contact form $\alpha$ and $m \in \Z$, define the Morse-type number $c_m$ as $\dim \CC_m(\alpha)$. Thus, $c_m$ equals the number of good closed orbits of $\alpha$ with index $m$, and, in general, one may have $c_m = \infty$. However, $c_m$ is finite whenever $\alpha$ has finitely many simple closed orbits, all of which have non-zero mean index. The following strong Morse inequalities will play an important role later on.

\begin{proposition}
\label{prop:morse_ineq}
Assume that $\alpha$ has finitely many simple closed orbits all of them with non-zero mean index. Let $k_1$ and $k_2$ be integers with the same parity such that $k_1 \leq k_2$. If $k_1$ and $k_2$ are even then
\[
 \sum_{m=k_1}^{k_2} (-1)^m b_m \leq  \sum_{m=k_1}^{k_2} (-1)^m c_m
 \]
 and if $k_1$ and $k_2$ are odd we have the reversed inequality
\[
 \sum_{m=k_1}^{k_2} (-1)^m b_m \geq  \sum_{m=k_1}^{k_2} (-1)^m c_m.
 \]
 \end{proposition}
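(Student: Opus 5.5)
We deduce these inequalities from the usual strong Morse inequalities for a filtered, graded chain complex with finite-dimensional pieces. The complex is $\CC_*(\alpha)$, whose homology is $\HC_*(M)$.

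\textbf{Step 1: finiteness and boundedness.} Since $\alpha$ has finitely many simple orbits, each with $\mi(\ga)\neq 0$, the iteration formula $|\cz(\ga^k)-k\mi(\ga)|\le n$ shows two things. First, only finitely many iterates have index in any bounded window, so $c_m<\infty$ for every $m$. Second, the indices of orbits with positive (resp. negative) mean index tend to $+\infty$ (resp. $-\infty$). The windows needed below involve only finitely many generators. In particular $b_m\le c_m<\infty$, since $\HC_m(M)$ is a subquotient of $\CC_m(\alpha)$.

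\textbf{Step 2: truncation.} Fix $k_1\le k_2$. Consider the truncated complex
\[
0\to \CC_{k_2+1}/\ker\p \to \CC_{k_2}\to\cdots\to \CC_{k_1}\to \operatorname{im}(\p_{k_1})\to 0 .
\]
More cleanly, write $Z_m=\ker(\p\colon\CC_m\to\CC_{m-1})$, $B_m=\operatorname{im}(\p\colon \CC_{m+1}\to\CC_m)$, and $z_m,\beta_m$ for their dimensions. These are finite dimensional by Step 1. We have $c_m=z_m+\beta_{m-1}$ and $b_m=z_m-\beta_m$. Hence
\[
\sum_{m=k_1}^{k_2}(-1)^m(c_m-b_m)=\sum_{m=k_1}^{k_2}(-1)^m(\beta_{m-1}+\beta_m).
\]
This sum telescopes to $(-1)^{k_1}\beta_{k_1-1}+(-1)^{k_2}\beta_{k_2}$. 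If $k_1,k_2$ are even, the result is $\beta_{k_1-1}+\beta_{k_2}\ge 0$, which gives the first inequality. If they are odd, it is $-(\beta_{k_1-1}+\beta_{k_2})\le 0$, which gives the reversed inequality.

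\textbf{Main obstacle.} The one delicate point is that the identification $\HC_*(M)=H_*(\CC_*(\alpha))$ must be degreewise, with $\CC_m$ finite dimensional over $\Nov$. This is what licenses the rank–nullity bookkeeping above, and it is exactly what Step 1 together with the description in Section~\ref{sec:ESH} provides. The statement is Proposition~3.3 of \cite{GG}, adapted as explained there. No infinite-dimensional issue arises, since only the finitely many degrees $k_1-1,\dots,k_2+1$ enter.
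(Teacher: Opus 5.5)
Your argument is correct and is essentially the paper's proof. The paper writes $M(t)-P(t)=(1+t)Q(t)$, where $q_m$ is the rank of $\partial_{m+1}$; this is exactly your identity $c_m-b_m=\beta_m+\beta_{m-1}$, and the paper then telescopes the alternating sum to $(-1)^{k_2}(q_{k_2}+q_{k_1-1})$ just as you do. Your Step 1 only makes explicit the finiteness of the $c_m$, which the paper states without proof just before the proposition.
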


\begin{proof}
Let $P(t) = \sum_{m=-\infty}^\infty b_m t^m$ and $M(t) = \sum_{m=-\infty}^\infty c_m t^m$ be the Poincar\'e and Morse series respectively. Consider the differential $\partial$ in $\CC_*(M)$ and let $q_m=\dim \Im\ \partial_{m+1}$. Define the series $Q(t) = \sum_{m=-\infty}^\infty q_m t^m$ and note that $q_m \geq 0$ for every $m$. A standard algebraic argument shows that
\[
M(t) - P(t) = (1+t)Q(t).
\]
Therefore, taking $t=-1$,
\begin{align*}
\sum_{m=k_1}^{k_2} (-1)^m c_m - \sum_{m=k_1}^{k_2} (-1)^m b_m & = \sum_{m=k_1}^{k_2} (-1)^m (q_m+q_{m-1}) \\
& = \sum_{m=k_1}^{k_2} (-1)^m q_m -  \sum_{m=k_1-1}^{k_2-1} (-1)^m q_m \\
& = (-1)^{k_2}q_{k_2} - (-1)^{k_1-1}q_{k_1-1} \\
& = (-1)^{k_2}(q_{k_2} + q_{k_1-1}),
\end{align*}
where in the last equality we use the fact that $k_1$ and $k_2$ have the same parity. Now, since $q_m \geq 0$ for every $m$, the last term is $\geq 0$ if $k_2$ is even and $\leq 0$ if $k_2$ is odd.
\end{proof}

\subsection{Resonance relations}
\label{sec:resonance}

Let $\ga$ be an isolated (possibly degenerate) closed Reeb orbit and denote by $\HC_*(\ga)$ its local equivariant symplectic homology; see \cite{GG,HM}.  For a non-degenerate orbit $\ga$, we have that
\begin{equation*}
\HC_*(\gamma) =
\begin{cases}
\Nov\ \ \text{if }*=\cz(\ga)\text{ and }\ga\text{ is good} \\
0\ \ \text{otherwise}.
\end{cases}
\end{equation*}
The Euler characteristic of $\ga$ is defined as
\[
\chi(\gamma) = \sum_{m \in \Z} (-1)^m\dim \HC_m(\gamma).
\]
This sum is finite. When $\gamma$ is non-degenerate,
\begin{equation*}
\chi(\gamma) =
\begin{cases}
(-1)^{\mu(\gamma)}\ \ \text{if }\ga\text{ is good} \\
0\ \ \text{otherwise}.
\end{cases}
\end{equation*}
The local {\it mean} Euler characteristic of $\gamma$ is
\[
\hat\chi(\gamma) = \lim_{j\to\infty} \frac{1}{j} \sum_{k=1}^j \chi(\gamma^k).
\]
The limit above exists and is rational; see \cite{GGo}. When $\ga$ is strongly non-degenerate, i.e. all iterates of $\ga$ are non-degenerate, we have
\begin{equation*}
\hat\chi(\gamma) =
\begin{cases}
(-1)^{\cz(\ga)}\ \text{ if }\ga^2\text{ is good} \\
(-1)^{\cz(\ga)}/2\ \text{ if }\ga^2\text{ is bad}.
\end{cases}
\end{equation*}

Recall from Section \ref{sec:results} that the mean index of a closed orbit $\ga$ is
\[
\mi(\ga) = \lim_{k\to\infty} \frac 1k \cz(\ga^k).
\]
Assume that $\alpha$ is has finitely many distinct simple closed orbits $\ga_1,\dots,\ga_r$ with non-vanishing mean index. This assumption ensures that the positive/negative mean Euler characteristic
\begin{equation*}
\label{eq:def_MEC}
\chi_\pm(M) = \lim_{j\to\infty} \frac{1}{j} \sum_{m=n+1}^j (-1)^m b_{\pm m}
\end{equation*}
is well-defined; see \cite{GGo}. 

The mean Euler characteristic is related to local equivariant symplectic homology via the resonance relation
\begin{equation}
\label{eq:resonance}
\sum_{\{1\leq i\leq r;\ \pm\mi(\ga_i)>0\}} \pm\frac{\hat\chi(\gamma_i)}{\mi (\gamma_i)} = \chi_\pm(M),
\end{equation}
proved in \cite{GK} in the non-degenerate case and in \cite{HM} in general.

\begin{remark}
Although it is not really important in this work, there is a missing minus sign in the resonance relation in \cite{GK,HM} for $\chi_-(M)$.
\end{remark}

\subsection{A useful property of periodic positive equivariant symplectic homology}

Assume that $\HC_*(M)$ is periodic as in Definition \ref{def:periodic}, $\dim \HC_j(M) < \infty$ for every $j$ and $\HC_j(M)=0$ for all $j$ sufficiently negative. Along this work the following elementary proposition will be very useful.

\begin{proposition}
\label{prop:periodicity}
Given an integer $p\geq 0$, let $f: \Z \to \Z$ be the function
\[
f(j) = j\chi_+(M) - \sum_{m=-\infty}^{j+p} (-1)^mb_m.
\]
Then $f(j) = f(j+\D)$ for every $j\geq P$.
\end{proposition}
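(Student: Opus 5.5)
We compute $f(j+\D)-f(j)$ directly from the definition. Since $\chi_+(M)=\r/\D$, the linear term contributes
\[
(j+\D)\chi_+(M) - j\chi_+(M) = \D\chi_+(M) = \r .
\]
The sums are finite because $b_m=0$ for $m$ sufficiently negative. Their difference is
\[
\sum_{m=-\infty}^{j+\D+p} (-1)^m b_m - \sum_{m=-\infty}^{j+p} (-1)^m b_m = \sum_{m=j+p+1}^{j+p+\D} (-1)^m b_m ,
\]
which is an alternating sum over a block of $\D$ consecutive degrees.

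The key step is to show that this block sum equals $\r$. Recall that $\r$ was defined as $\sum_{m=P}^{P+\D-1}(-1)^m b_m$ and that, as noted before the definition of $\chi_+(M)$, the same value is obtained for every window $[k,k+\D-1]$ with $k\geq P$. We verify this quickly. Periodicity gives $b_{m+\D}=b_m$ for $m\geq P$, and $\D$ is even, so $(-1)^{m+\D}=(-1)^m$. Passing from the window starting at $k$ to the one starting at $k+1$ removes the term $(-1)^k b_k$ and adds $(-1)^{k+\D}b_{k+\D}$, and these are equal. Induction on $k$ then shows the window sum is constant for $k\geq P$.

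Here the window starts at $k=j+p+1$. Since $j\geq P$ and $p\geq 0$, we have $k\geq P$, so the block sum equals $\r$.

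Therefore
\[
f(j+\D)-f(j) = \r - \r = 0 .
\]

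The only point needing care is that the window start $j+p+1$ stays within the periodic range, and this is exactly what the hypotheses $j\geq P$ and $p\geq 0$ guarantee. The evenness of $\D$, which is assumed throughout the paper, is what makes the signs match under the shift by $\D$.
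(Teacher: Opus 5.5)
Your proposal is correct and follows essentially the same route as the paper. The paper also expands $f(j+\D)$, cancels the linear increment $\D\chi_+(M)=\r$ against the new block $\sum_{m=j+p+1}^{j+p+\D}(-1)^mb_m$, and uses that every length-$\D$ window starting at or beyond $P$ sums to $\r$. It simply takes that invariance for granted, whereas you verify it explicitly using periodicity and the evenness of $\D$.
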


\begin{proof}
It follows from the periodicity of the positive equivariant symplectic homology that
\[
\chi_+(M)=\r/\D,
\]
where
\[
\r = \sum_{m=k}^{k+\D-1} (-1)^mb_m
\]
for any $k\geq P$. Hence,
\begin{align*}
f(j+\D) & = j\chi_+(M) + \r - \sum_{m=-\infty}^{j+p} (-1)^mb_m - \sum_{m=j+p+1}^{j+p+\D} (-1)^mb_m \\
& = j\chi_+(M) - \sum_{m=-\infty}^{j+p} (-1)^mb_m \\
& = f(j),
\end{align*}
for any $j\geq P$.
\end{proof}

\section{Index recurrence}
\label{sec:IRT}

A crucial ingredient in the proof of Theorem \ref{thm:main} is the following combinatorial result addressing the index behavior under iterations which is an easy generalization of \cite[Theorem 4.1]{GGM2}. This result can also be deduced from the so-called generalized common index jump theorem due to Duan, Liu, Long and Wang \cite{DLLW2}; see also \cite{Lon02,LZ}.

\begin{theorem}
\label{thm:IRT}
Let $\Phi_1,\ldots,\Phi_r$ be a finite collection of strongly non-degenerate elements of $\TSp(2n)$ with $\mi(\Phi_i)\neq 0$ for all $1 \leq i \leq r$. Denote by $\s_i=\mi(\Phi_i)/|\mi(\Phi_i)|$ the sign of $\mi(\Phi_i)$. Then for any $\eta>0$ and any $\ell_0\in\N$, there exist two integer sequences $d_j^\pm\to\infty$ and two sequences of integer vectors $\vk^\pm_j=\big(k_{1j}^\pm,\ldots,k_{rj}^\pm\big)$ with all components going to infinity as $j\to\infty$, such that for all $i$ and $j$, and all $\ell\in\Z$ in the range $1\leq |\ell|\leq \ell_0$, we have
\begin{enumerate}
\item[\reflb{cond:i}{\rm{(i)}}]
  $\big|\mi\big(\Phi^{k_{ij}^\pm}_i\big)-\s_id_j^\pm\big|<\eta$ with the
  equality
  $\mi\big(\Phi^{k_{ij}^\pm}_i\big) =
  \cz\big(\Phi^{k_{ij}^\pm}_i\big)=\s_id_j^\pm$ whenever $\Phi_i(1)$ is
  hyperbolic,
\item[\reflb{cond:ii}{\rm{(ii)}}]
  $\cz\big(\Phi^{k_{ij}^\pm+\ell}_i\big)= \s_id_j^\pm + \cz(\Phi^\ell_i)$,
  and
\item[\reflb{cond:iii}{\rm{(iii)}}]
  $\cz\big(\Phi^{k_{ij}^-}_i\big)-\s_id_j^-=
  -\big(\cz\big(\Phi^{k_{ij}^+}_i\big)-\s_id_j^+\big)$.
\end{enumerate}
Furthermore, for any $N\in \N$ we can make all $d_j^\pm$ and $k_{ij}^\pm$ divisible by~$N$.
\end{theorem}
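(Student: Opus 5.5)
Theorem \ref{thm:IRT} is the index recurrence theorem, and the plan is to reduce it to the by now standard strategy of \cite[Theorem 4.1]{GGM2}: split each strongly non-degenerate $\Phi_i$ into elementary pieces and make all their rotation angles simultaneously almost integral along a common sequence of iterations.

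\textbf{Step 1 (normal form).} By non-degeneracy, each $\Phi_i(1)$ has no eigenvalue $1$. Up to homotopy with fixed endpoints (which does not change $\cz$ of iterates), write $\Phi_i$ as a direct sum of
\begin{itemize}
\item a hyperbolic part, whose index is exactly linear under iterations, so that $\cz(\Phi_i^k)=k\mi(\Phi_i^{\rm hyp})$, and
\item elliptic rotation blocks with angles $\theta_{i,s}\in(0,1)$, each shifted by an integer loop.
\end{itemize}
For each block the index is then given by the explicit formula
\[
\cz(\Phi^k)=k\,a+\sum_s\Big(2\lfloor k\theta_s\rfloor+1\Big)
\]
or an analogous formula. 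The mean index is $\mi=a+2\sum_s\theta_s$.

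\textbf{Step 2 (simultaneous Diophantine approximation).} Consider the vector $\big(1/|\mi(\Phi_i)|,\ \theta_{i,s}/|\mi(\Phi_i)|\big)$ over all $i$ and $s$. By Kronecker/Dirichlet simultaneous approximation (the closure of the orbit of a linear flow on a torus is a subtorus containing $0$ as a recurrent point, from both sides), choose real $t_j\to\infty$, divisible-adjusted by $N$, with the following two properties:
\begin{itemize}
\item $k_{ij}:=t_j/|\mi(\Phi_i)|$ is close to an integer;
\item $k_{ij}\theta_{i,s}$ is within $\epsilon$ of an integer, with the fractional errors all of a prescribed sign.
\end{itemize}
The two sequences $\pm$ correspond to approaching the integer lattice from above versus from below. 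Round appropriately, and set $d_j^\pm$ to be the common value $\s_i\,\mi(\Phi_i^{k_{ij}})$ rounded to an even integer. The hyperbolic part is exact, and the rational structure can be absorbed by taking everything divisible by a common denominator times $N$.

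\textbf{Step 3 (verifying (i)–(iii)).}
\begin{itemize}
\item \textbf{(i)} follows since the mean index is linear in $k$, so it is within $\eta$ once $\epsilon$ is small.
\item \textbf{(ii)}: for $|\ell|\le\ell_0$, $\lfloor (k+\ell)\theta\rfloor=k\theta+\lfloor \ell\theta\pm\epsilon'\rfloor$. This equals $k\theta+\lfloor\ell\theta\rfloor$ provided $\epsilon<\min\operatorname{dist}(\ell\theta_{i,s},\Z)$ over the finitely many pairs, which is positive by strong non-degeneracy. This gives $\cz(\Phi^{k+\ell})=\s_i d+\cz(\Phi^\ell)$.
\item \textbf{(iii)}: at $\ell=0$, the defect $\cz(\Phi^{k})-\s d$ equals $\sum_s(\pm1)$ according to the side of approach. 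Approaching from opposite sides in the $\pm$ sequences flips every sign, giving the antisymmetry.
\end{itemize}

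\textbf{Main obstacle.} The delicate point is Step 2: we must obtain a single $d_j$ shared by all $i$, despite different mean indices and signs $\s_i$, while controlling the side of approach for every angle simultaneously. I would handle this by approximating the point $-\epsilon(1,\dots,1)$ (respectively $+\epsilon(1,\dots,1)$) modulo the closure subtorus. This relies on the subtorus being a group, hence symmetric, as in \cite{GGM2,DLLW2}.
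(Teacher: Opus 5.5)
There is a genuine gap, in Step~2, and it breaks your argument for (iii). You ask for $t_j$ such that all the errors $\ep_{i,s}:=k_{ij}\theta_{i,s}-m_{i,s}$ have one prescribed sign, by approximating $\pm\ep(1,\dots,1)$. This is impossible in general. The orbit closure $\Gamma$ is cut down by every rational relation among $1$, the $\theta_{i,s}$ and the $1/|\mi(\Phi_i)|$. The diagonal point $\pm\ep(1,\dots,1)$ typically does not lie in $\Gamma$, so no orbit points approach it, and working ``modulo the closure subtorus'' does not change this.

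Here is a concrete counterexample. Take $\Phi$ with $\Phi(1)$ the direct sum of a rotation by $2\pi\theta$, a rotation by $-2\pi\theta$ (with $\theta\notin\Q$), and a hyperbolic block of nonzero index. Then $\Phi$ is strongly non-degenerate with $\mi(\Phi)\neq 0$. In your normalization the angles are $\theta$ and $1-\theta$. Since $k\theta+k(1-\theta)=k\in\Z$, we get $\ep_1=-\ep_2$ as soon as both errors are small. So the two errors have opposite signs for every $k$, and no choice of $t_j$ makes them both positive.

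What (iii) actually needs is weaker: each individual sign must flip between the $+$ and $-$ sequences. You do invoke the symmetry of the subtorus, but you apply it to the wrong target point. The right construction is as follows.
\begin{itemize}
\item Choose $\zeta\in\Gamma$ arbitrarily close to $0$ at which every effective error is nonzero. In your real-$t$ setup, let $\zeta_i,\zeta_{i,s}$ be the coordinates corresponding to $1/|\mi(\Phi_i)|$ and $\theta_{i,s}/|\mi(\Phi_i)|$. Then the limiting error is $k_i\theta_{i,s}-m_{i,s}\to\zeta_{i,s}-\theta_{i,s}\zeta_i$.
\item Let the $+$ sequence approximate $\zeta$ and the $-$ sequence approximate $-\zeta$, which lies in $\Gamma$ because $\Gamma$ is a group.
\item The sign pattern at $\zeta$ is mixed in general, but it is exactly reversed at $-\zeta$. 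So each defect $\sum_s\mathrm{sign}(\ep_{i,s})$ changes sign, which is (iii).
\end{itemize}

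The existence of such a $\zeta$ is the missing step, and it is where strong non-degeneracy enters. The Lie algebra $W$ of $\Gamma$ is a rational subspace containing the flow direction $v$. Its projection to the $(\zeta_i,\zeta_{i,s})$-plane is therefore a rational subspace containing the irrational line $\R(1,\theta_{i,s})$, so it is the whole plane. Hence $W$ is not contained in any of the finitely many hyperplanes $\{\zeta_{i,s}=\theta_{i,s}\zeta_i\}$, and a generic small $\zeta\in W$ works. Note that $v$ itself lies in all these hyperplanes, so moving along the flow line never gives sign control.

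Two smaller points.
\begin{itemize}
\item $d_j^\pm$ must be exactly the common integer $\s_i\big(k_ia_i+2\sum_s m_{i,s}\big)$. ``Rounding to an even integer'' breaks the exact identity (ii) whenever this integer is odd; evenness should come from taking $N$ even.
\item The normal form in Step~1 requires deforming the endpoint $\Phi_i(1)$ through matrices with fixed spectrum (for instance, Jordan blocks to semisimple ones). Homotopies with fixed endpoints are not enough.
\end{itemize}

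For comparison, the paper avoids the Diophantine analysis entirely. It quotes \cite[Theorem 4.1]{GGM2} for the case $\mi(\Phi_i)>0$ for all $i$. It then replaces each $\Phi_i$ with $\mi(\Phi_i)<0$ by $\Psi_i(t)=\Phi_i(t)^{-1}$, using $\cz(\Psi_i^m)=-\cz(\Phi_i^m)$, $\mi(\Psi_i^m)=-\mi(\Phi_i^m)$, and the fact that $\Psi_i(1)$ is hyperbolic if and only if $\Phi_i(1)$ is.
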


\begin{proof}
This theorem was proved in \cite[Theorem 4.1]{GGM2} under the assumption that $\mi(\Phi_i)>0$ for every $i$. We claim that the desired result can be reduced to this case. As a matter of fact, assume that at least one $\Phi_i$ has negative mean index and order $\Phi_1,\ldots,\Phi_r$ such that $\Phi_1,\ldots,\Phi_{r'}$ have negative mean index for some $r'>0$ and $\Phi_{r'+1},\ldots,\Phi_{r}$ have positive mean index (the set $\{\Phi_{r'+1},\ldots,\Phi_{r}\}$ can be empty). Take the paths $\Psi_1,\ldots,\Psi_r$ given by
\[
\Psi_i(t)=\Phi_i(t)^{-1} \text{ for every }1\leq i \leq r'
\]
and
\[
\Psi_i(t)=\Phi_i(t) \text{ for every } r'+1\leq i \leq r
\]
so that $\mi(\Psi_i)>0$ for every $i$.

Applying \cite[Theorem 4.1]{GGM2} to $\Psi_1,\ldots,\Psi_r$, given $\eta>0$ and $\ell_0\in\N$, we get two integer sequences $d_j^\pm\to\infty$ and two sequences of integer vectors $\vk^\pm_j=\big(k_{1j}^\pm,\ldots,k_{rj}^\pm\big)$ with all components going to infinity as $j\to\infty$, such that for all $i$ and $j$, and all $\ell\in\Z$ in the range $1\leq |\ell|\leq \ell_0$, we have
\begin{enumerate}
\item[\reflb{cond:i'}{\rm{(i')}}]
  $\big|\mi\big(\Psi^{k_{ij}^\pm}_i\big)-d_j^\pm\big|<\eta$ with the
  equality
  $\mi\big(\Psi^{k_{ij}^\pm}_i\big) =
  \cz\big(\Psi^{k_{ij}^\pm}_i\big)=d_j^\pm$ whenever $\Psi_i(1)$ is
  hyperbolic,
\item[\reflb{cond:ii'}{\rm{(ii')}}]
  $\cz\big(\Psi^{k_{ij}^\pm+\ell}_i\big)= d_j^\pm + \cz(\Psi^\ell_i)$,
  and
\item[\reflb{cond:iii'}{\rm{(iii')}}]
  $\cz\big(\Psi^{k_{ij}^-}_i\big)-d_j^-=
  -\big(\cz\big(\Psi^{k_{ij}^+}_i\big)-d_j^+\big)$.
\end{enumerate}
Now, \ref{cond:i}, \ref{cond:ii} and \ref{cond:iii} follow from \ref{cond:i'}, \ref{cond:ii'} and \ref{cond:iii'} using the facts that, for all $1\leq i\leq r'$ and $m \in \N$, $\mi(\Psi_i^m)=-\mi(\Phi_i^m)$, $\cz(\Psi_i^m)=-\cz(\Phi_i^m)$ (note that $\Phi_i^m$ is non-degenerate) and $\Psi_i(1)$ is hyperbolic if and only if so is $\Phi_i(1)$.
\end{proof}

\section{Proof of Theorems \ref{thm:main} and \ref{thm:non-hyp}}
\label{sec:proofmain}

Theorem \ref{thm:main} is an immediate consequence of the following two more general results, proved in the next two sections.

\begin{theorem}
\label{thm:main1}
Let $(M^{2n+1},\xi)$ be a closed contact manifold satisfying either \ref{cond:F} or \ref{cond:NF}. Assume that $\HC_*(M)$ is periodic, $b_j = 0$ for all $j$ sufficiently negative, and $b_j < \infty$ for all $j \in \Z$.

Let $\alpha$ be a non-degenerate contact form on $M$. Assume that there exists an integer $p\geq 0$ such that $\alpha$ satisfies the following conditions:
\begin{itemize}
\item $\alpha$ has no good periodic orbit $\ga$ such that $\cz(\ga)=\pm p$;
\item every closed orbit of $\alpha$ has non-vanishing mean index;
\item under \ref{cond:NF}, $\alpha$ is index-admissible. 
\end{itemize}
If $p>1$ assume that $M$ satisfies \ref{cond:PD}. Then, if $p$ is even (resp. $p$ is odd)
\[
\#\P_\alpha \geq r^o_M\ (\text{resp. }\#\P_\alpha \geq r^e_M),
\]
where
\begin{align*}
& r_M^o = 
\begin{cases}
2(-q\chi_+(M) + \sum_{j=-\infty}^q (-1)^jb_j)\ \text{if}\ p=0 \\
2(-q\chi_+(M) + \sum_{j=-\infty}^q (-1)^jb_j) + 2b_{q+p} - b_q\ \text{if}\ p\ \text{is even and}\ \geq 2, \\
\end{cases} \\
& r_M^e = 2(q\chi_+(M) - \sum_{j=-\infty}^{q} (-1)^jb_j) + 2b_{q+p} + b_q,
\end{align*}
with $q=\min\{s\D;\,s\D \geq P,\,s \in \N\}$. Moreover, if the equality holds then $\alpha$ is lacunary with odd (resp. even) parity.
\end{theorem}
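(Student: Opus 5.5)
The plan is to reduce the statement to a counting argument that combines the Morse inequalities (Proposition \ref{prop:morse_ineq}), the resonance relation \eqref{eq:resonance} and the index recurrence theorem (Theorem \ref{thm:IRT}).

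\textbf{Setup.} If $\#\P_\alpha=\infty$ there is nothing to prove, so assume $\alpha$ has finitely many simple orbits $\ga_1,\dots,\ga_r$, all with non-zero mean index. Then every $c_m$ is finite. Let $\Phi_i$ be the linearized flow along $\ga_i$ in the trivialization from Section \ref{sec:ESH}; this trivialization is closed under iterations, so it is compatible with Theorem \ref{thm:IRT}.

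Apply Theorem \ref{thm:IRT} to $\Phi_1,\dots,\Phi_r$ with $\eta$ small, $\ell_0$ large, and $N$ a multiple of $2\D$ and of all the orders needed to control good/bad iterates. This gives $d^\pm_j\to\infty$ that are multiples of $\D$ and eventually $\geq P$. By Proposition \ref{prop:periodicity} we may then replace $q$ by $d=d_j^\pm$ in the formula for $r_M$. The goal is to estimate the alternating sum
\[
S^\pm=\sum_{m\le d^\pm+p}(-1)^m c_m \quad\text{(or the analogous sum up to } d^\pm \text{, depending on the parity of } p\text{)}
\]
in two ways.

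\textbf{Step 1 (orbit side).} For each $i$, split the iterates of $\ga_i$ into three groups:
\begin{itemize}
\item iterates $k<k_i^\pm$ far from $k_i^\pm$;
\item the iterate $k_i^\pm$ itself;
\item iterates $k_i^\pm+\ell$ with $1\le|\ell|\le\ell_0$.
\end{itemize}
By (ii), $\cz(\Phi_i^{k_i\pm\ell})=\s_id\pm\ldots$ is $d$ plus $\cz(\Phi_i^{\pm\ell})$. By the hypothesis that no good orbit has index $\pm p$, no iterate $k_i+\ell$ lands exactly on the cutoff degree $d\pm p$. So the iterates with index $\le d+p$ are exactly those with $k<k_i$, together possibly with $k_i$ itself. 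Orbits with negative mean index give iterates in degrees $\le -d$ by the same theorem, so they contribute only through the symmetric cutoff at $-p$; this is handled identically.

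Summing $\chi(\ga_i^k)$ over $k<k_i$ gives $k_i\hat\chi(\ga_i)$ plus a bounded error, because $\chi(\ga_i^k)$ is periodic in $k$ (period $1$ or $2$) and $N$ is even. Since $k_i\approx d/|\mi(\ga_i)|$ by (i), the resonance relation \eqref{eq:resonance} turns $\sum_i k_i\hat\chi(\ga_i)$ into $d\chi_+(M)$ up to the error $\sum_i \s_i(\mi(\Phi_i^{k_i})-\s_id)\hat\chi/\mi$.

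Now add the estimates for the $+$ and $-$ sequences. Property (iii) makes the non-integral defects $\cz(\Phi_i^{k_i})-\s_id$ cancel. What survives is, for each $i$, the contribution of the single iterate $k_i^\pm$, which is at most $1/2$ in absolute value per sequence. The net result is
\[
S^++S^- = (d^++d^-)\chi_+(M) + E,\qquad |E|\le r,
\]
where $E$ is a sum of local terms $\pm\chi(\ga_i^{k_i^\pm})$, one per simple orbit.

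\textbf{Step 2 (homology side).} Apply Proposition \ref{prop:morse_ineq} on the range from $-\infty$ (where $b_m=c_m=0$) to $d\pm p$ with the appropriate parity. This bounds $S^\pm$ by $\sum_{m\le d\pm p}(-1)^mb_m$, which by periodicity equals $\sum_{m\le q}(-1)^mb_m$ plus $(d-q)\chi_+$ plus boundary terms $b_{q+p}$ and $b_q$. Comparing the two sides gives the stated $r^o_M$ (for $p$ even) or $r^e_M$ (for $p$ odd).

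\textbf{Expected obstacle: the boundary terms for $p>1$.} Here the window $[d,d+p]$ contains several homology degrees. I would use \ref{cond:PD} (Remark \ref{rmk:PD}) to identify $b_{d+p}$ with $b_{d-p}$ and the other $b_{d\pm\cdot}$ terms pairwise. Combined with (iii), which pairs index $d+x$ at $k^+$ with $d-x$ at $k^-$, this should show that the contributions of the iterates near $k_i^\pm$ cancel against these reflected Betti numbers and leave exactly $2b_{q+p}\mp b_q$. Getting this bookkeeping right, with the correct signs and with good/bad iterates and hyperbolic orbits handled via (i), is the step I expect to be hardest.

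\textbf{Equality case.} If $\#\P_\alpha=r_M$, then every inequality above is sharp:
\begin{itemize}
\item each local term $\pm\chi(\ga_i^{k_i})$ takes the extreme sign;
\item the correction $q_m$ in the Morse inequalities vanishes at the endpoints.
\end{itemize}
Since $k_i^\pm$ can be taken to be any large multiple of $N$, sharpness for all $j$ and all residues should force $(-1)^{\cz(\ga_i^k)}$ to be constant, with odd sign for $p$ even and even sign for $p$ odd, for all $i$ and $k$. That is exactly lacunarity with the stated parity.
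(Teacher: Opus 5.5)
Your outline does not prove the bound for any $p\ge 1$, and the reason is structural rather than bookkeeping.

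\textbf{The orbit-side identity is incomplete.} It is not true that the iterates with index $\le d+p$ are exactly those with $k\le k_i$. By (ii), for $\s_i=1$ the iterate $\ga_i^{k_i+\ell}$ with $\cz(\ga_i^\ell)<-p$ lies below the cutoff, while $\ga_i^{k_i-\ell}$ lies above it. These cancel in pairs, since $k_i$ is even. However, the iterates $\ga_i^{k_i\pm\ell}$ with $\cz(\ga_i^\ell)\in(-p,p)$ both lie inside $[d-p,d+p]$. The correct identity, over the window $[-d-p,d+p]$, is
\[
\sum_{m=-d-p}^{d+p}(-1)^mc_m=d\chi_+(M)-\sum_{|\cz(\ga_i^{k_i})|>d+p}\chi(\ga_i^{k_i})+\sum_{j=-p}^{p}(-1)^jc_j .
\]

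\textbf{The estimate $|E|\le r$ cannot produce $b_q$.} This is the more serious problem. That estimate only sees orbits whose $k_i$-th iterate lies outside $[\s_id-p,\s_id+p]$. The endpoint Morse inequalities then give, for $p=1$, at best $r\ge 2(q\chi_+(M)-\sum_{m\le q}(-1)^mb_m)+2b_{q+1}+2c_0$, with no $b_q$ term. Take an irrational ellipsoid in $S^{2n+1}$ with $n$ even, where $c_0=0$ and $b_j=1$ for even $j\ge n+2$. Your bound gives $n$, whereas $r^e_M=n+1$. The missing orbit is the one with $\cz(\ga_i^{k_i})=d$. So even $p=1$, which you treat as settled by Step 2, is not proved, and $p\ge2$ is left open. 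The equality statement inherits the same gap.

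\textbf{The missing idea is the paper's Step 3.} By (ii), for $|j|\le p$ the iterates $\ga_i^{k_i\pm\ell}$ contribute exactly $c_j+c_{-j}$ to $c_{\pm d+j}$. Hence there are $\sum_{j=-p}^{p}(c_{d+j}+c_{-d+j}-c_j-c_{-j})$ further simple orbits with $\ga_i^{k_i}$ good and of index in $[\pm d-p,\pm d+p]$. These are disjoint from the two families bounded in Steps 1--2, which are themselves disjoint by (iii). The terms $2b_{q+p}\mp b_q$ then come from combining:
\begin{itemize}
\item this count together with the correction $\sum_j(-1)^jc_j$ above;
\item the hypothesis $c_{\pm p}=0$;
\item the Morse inequalities on the inner windows $[d-p+1,d+p-1]$ and $[-d-p+1,-d+p-1]$ (on the latter all $b_m$ vanish; for $p=1$ this is just $c_d\ge b_d$);
\item for $p>1$, the \ref{cond:PD} identity $2\sum_{j=1}^p(-1)^jb_{d+j}=\sum_{j=-p}^p(-1)^jb_{d+j}-b_d$.
\end{itemize}

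\textbf{The case $p=0$ and the equality case.} Only for $p=0$ does your scheme match the paper's argument, and it needs two repairs there.
\begin{itemize}
\item $\sum_{m\le d+p}(-1)^m c_m$ is an infinite sum if some orbit has negative mean index. Use the window $[-d-p,d+p]$ together with $-d+p<\bmin$ and $\chi_-(M)=0$.
\item The resonance error involves $\mi(\ga_i^{k_i})-\s_id$, not $\cz$, so (iii) does not cancel it. Instead take $\eta\sum_i|\hat\chi(\ga_i)/\mi(\ga_i)|<1$ and use integrality ($N$ a multiple of $\D$, $k_i$ even) to get $\sum_ik_i\hat\chi(\ga_i)=d\chi_+(M)$ exactly.
\end{itemize}
In the equality case, all $k_i^\pm$ are even, so no ``residues'' can be varied. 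What forces lacunarity is that the refined bound counts only orbits with $\ga_i^{k_i}$ good. Equality therefore makes every $\ga_i^{k_i}$ good and of the right parity, and goodness of this even iterate transfers its parity to the odd iterates.
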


In Section \ref{sec:hypotheses} we present examples showing that the hypothesis that $\alpha$ has no good periodic orbit $\ga$ such that $\cz(\ga)=\pm p$ is necessary to get the lower bounds stated above, although the corresponding assumption in Theorem \ref{thm:main} is probably just technical. The point of these examples is that they are lacunary contact forms with the ``wrong'' parity, that is, their parities are even for the bound $r^o_M$ and odd for the bound $r^e_M$; see the discussion below.

Recall that, given a lacunary non-degenerate contact form $\alpha$, its parity is defined as the parity of the indices of its closed orbits. (Here we tacitly assume that $\alpha$ has at least one closed orbit; this condition is automatically satisfied whenever $\HC_j(M)\neq 0$ for some $j \in \Z$.) The second main result shows that the inequality in Theorem \ref{thm:main1} becomes an equality for lacunary contact forms with the ``right'' parity, thereby explaining the notation in the superscript of $r^{o,e}_M$.

\begin{theorem}
\label{thm:main2}
Let $(M^{2n+1},\xi)$ be a closed contact manifold satisfying either \ref{cond:F} or \ref{cond:NF}. Assume that $\HC_*(M)$ is periodic, $b_j \neq 0$ for some $j$, $b_j = 0$ for all $j$ sufficiently negative, and $b_j < \infty$ for all $j \in \Z$.

Let $\alpha$ be a non-degenerate lacunary contact form on $M$ with parity $\del$. Under \ref{cond:NF}, suppose furthermore that $\alpha$ is index-admissible. If $\del$ is odd (resp. even) then 
\[
\#\P_\alpha = r^o_M\ (\text{resp. }\#\P_\alpha = r^e_M),
\]
with $r^{o}_M$ and $r^{e}_M$ as in Theorem \ref{thm:main1}, for any $p\neq \del\,(\bmod\,2)$, which, in this case, are equal to
\[
r_M^o = 2(-q\chi_+(M) - \sum_{j=-\infty}^q b_j)
\]
and
\[
r_M^e = 2(q\chi_+(M) - \sum_{j=-\infty}^{q} b_j) + b_q.
\]
In particular, $\#\P_\alpha$ does not depend on the choice of $\alpha$.
\end{theorem}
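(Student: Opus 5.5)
The plan is to prove Theorem \ref{thm:main2} by combining Theorem \ref{thm:main1}, which gives the lower bound, with a direct count of generators, which gives the matching upper bound. The count uses the fact that for a lacunary form the chain complex computes homology with no cancellation.

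\emph{Step 1: reduction to the lacunary complex.} Since $\alpha$ is lacunary, every generator of $\CC_*(\alpha)$ has degree $\equiv\del\pmod 2$. Hence the differential vanishes and $c_m=b_m$ for all $m$. In particular $\HC_*(M)$ is lacunary with parity $\del$, and $\del$ is determined by $\HC_*(M)$ because some $b_j\neq 0$. Lacunarity also makes every iterate good. As in Remark \ref{rmk:unnecessary orbits 2}, finiteness of the $b_j$ forces every closed orbit to have non-zero mean index. Finally, for any $p\not\equiv\del\pmod 2$ there are no orbits of index $\pm p$. So all hypotheses of Theorem \ref{thm:main1} hold, and Theorem \ref{thm:main1} gives $\#\P_\alpha\geq r^o_M$ (resp. $r^e_M$). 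The simplified formulas for $r^o_M,r^e_M$ then follow at once from lacunarity of $b_*$: for $\del$ odd, $(-1)^jb_j=-b_j$, $b_{q+p}=0$ since $q+p$ is even, and $b_q=0$; the case $\del$ even is symmetric.

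\emph{Step 2: finiteness of $\#\P_\alpha$.} This is the step I expect to be the main obstacle. Fix a window of degrees around a large level. For each simple orbit $\ga$ the indices satisfy $|\cz(\ga^k)-k\mi(\ga)|\le n$, so a simple orbit with $|\mi(\ga)|\le N-n$ already contributes a generator of degree in $[-N,N]$. Since $\sum_{|m|\le N}c_m=\sum_{|m|\le N}b_m<\infty$, only finitely many simple orbits have $|\mi|$ below any given bound. To exclude infinitely many orbits of large mean index, I would use periodicity: the number of generators in $[k,k+\D)$ equals $\sum_{j=k}^{k+\D-1}b_j$, which is independent of $k\ge P$. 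Each simple orbit with $0<\mi(\ga)\le \D-2n$ contributes at least one iterate to every such window. I would try to push this to all orbits by iterating: if $\mi(\ga)$ is large, the orbit $\ga$ still contributes infinitely many iterates, with asymptotic density $1/\mi(\ga)$ per degree. The bounded average density $|\chi_+(M)|$ should then bound the density sum $\sum_\ga 1/|\mi(\ga)|$ appearing in the resonance relation \eqref{eq:resonance}. If that does not close the argument, I would appeal to the finiteness built into the proof of Theorem \ref{thm:main1}, where the count is over a window of bounded length.

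\emph{Step 3: the upper bound.} With $\alpha$ having finitely many simple orbits $\ga_1,\dots,\ga_r$, I would apply Theorem \ref{thm:IRT} to their linearized flows, taking $N$ such that all $d_j^\pm$ are multiples of $\D$ with $d_j^\pm\ge q$. I would then count the generators of degree in $(-\infty,d_j^+]$ in two ways. Homologically, this count is $\sum_{m\le d_j^+}b_m=-d_j^+\chi_+(M)+\big(\text{the $f$-term of Proposition \ref{prop:periodicity}}\big)$ for $\del$ odd. Dynamically, each $\ga_i$ contributes its iterates up to $k_{ij}^+$ plus a bounded correction governed by (ii) and (iii) of Theorem \ref{thm:IRT}; the main term is $d_j^+\hat\chi(\ga_i)/\mi(\ga_i)$, and summing gives $d_j^+\chi_+(M)$ by \eqref{eq:resonance}. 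The corrections pair the iterates $\ga_i^{k^+_{ij}\pm\ell}$ with $\ga_i^{\ell}$ and $\ga_i^{-\ell}$. Lacunarity eliminates exactly the boundary ambiguities, at index $d_j^+$ when $\del$ is even and none when $\del$ is odd, that make Theorem \ref{thm:main1} only an inequality. The result is that each simple orbit contributes exactly $1/2$ to $\sum_{m\le q}b_m+q\chi_+(M)$ (with the extra $b_q$ term for $\del$ even). Solving this identity yields $\#\P_\alpha=r^o_M$ (resp. $r^e_M$). The final statement, independence from the choice of $\alpha$, is then immediate, because the right-hand side depends only on $\HC_*(M)$.
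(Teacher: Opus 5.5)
Your overall architecture matches the paper's. For a lacunary form the differential of $\CC_*(\alpha)$ vanishes, so $c_m=b_m$ and every iterate is good. The mean indices are non-zero, in fact positive since $b_j=0$ for $j\ll 0$. One then proves $\#\P_\alpha<\infty$ and counts exactly with Theorem \ref{thm:IRT} and Lemma \ref{lemma:resonance}.

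The genuine gap is Step 2, which you correctly flag as the obstacle. Your density argument cannot close it. Comparing $\sum_{m\le K}c_m=\sum_{m\le K}b_m\approx K|\chi_+(M)|$ with the roughly $K/\mi(\ga)$ iterates of each simple orbit below degree $K$ only gives $\sum_\ga 1/\mi(\ga)\le|\chi_+(M)|$. That inequality alone does not exclude infinitely many simple orbits with rapidly growing mean indices, say $\mi(\ga_i)\sim i^2$. Counting over long ranges of degrees never forces the iterates of different orbits into the same degrees, which is what a contradiction with bounded Betti numbers requires. Your fallback does not work either: the proof of Theorem \ref{thm:main1} starts by assuming $\alpha$ has finitely many simple closed orbits, so no finiteness can be extracted from it.

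The missing ingredient is simultaneous recurrence. For every finite subfamily $\ga_1,\dots,\ga_m$ of simple orbits, you need iterates $\ga_1^{k_1},\dots,\ga_m^{k_m}$ whose indices all lie in one interval of length about $2n$. The paper gets this from G\"urel's Lemma \ref{lemma:combinatorial}. You could equally apply Theorem \ref{thm:IRT} to the finite subfamily alone with $\eta<1$, which gives $|\cz(\ga_i^{k_i})-d|\le n$ for all $i$, as in \eqref{eq:IRT3}. These are $m$ distinct generators of a complex with zero differential, so $m\le\sum_{j=d-n}^{d+n}b_j\le\sup_k\sum_{i=0}^{2n}b_{k+i}$. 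This bound is finite by periodicity and finiteness of the $b_j$, and independent of the subfamily, so $\#\P_\alpha$ is finite.

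With that in place, your Step 3 is the paper's Claim 2. It should be made precise along the paper's lines rather than via ``each orbit contributes $1/2$''. The exact identity $\sum_i k_i\hat\chi(\ga_i)=d\chi_+(M)$ comes from the integrality argument of Lemma \ref{lemma:resonance}, not from the resonance relation alone. Then, using the two vectors $(d,k_i)$ and $(d',k'_i)$ linked by property (iii) of Theorem \ref{thm:IRT}, one shows:
\begin{itemize}
\item the number of $\ga_i$ with $\cz(\ga_i^{k_i})>d$ and the number with $\cz(\ga_i^{k_i})<d$ both equal $(-1)^\del q\chi_+(M)-\sum_{m\le q}b_m+b_0$;
\item exactly $b_q-2b_0$ of them have $\cz(\ga_i^{k_i})=d$, where $2b_0$ accounts for the pairs $\ga_i^{k_i\pm\ell}$ with $\cz(\ga_i^\ell)=0$.
\end{itemize}
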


\begin{remark}
\label{rmk:unnecessary orbits 4}
Note that the hypothesis that there is no good closed orbit with index $\pm p$ for some $p$ even (resp. $p$ odd) in Theorem \ref{thm:main1} is automatically satisfied by a lacunary contact form with odd (resp. even) parity. Moreover, if $M$ admits a non-degenerate lacunary contact form $\alpha$ then $\HC_*(M)$ is lacunary with the same parity as $\alpha$; cf. Section \ref{sec:ESH}. Therefore, if $M$ admits a non-degenerate lacunary contact form with parity $\del$ then the hypothesis that, for a given non-degenerate contact form, there is no closed orbit with index $\pm p$ for some $p \neq \del\,(\bmod\,2)$ excludes periodic orbits which are \emph{unnecessary} from the homological point of view; cf. Remark \ref{rmk:unnecessary orbits 1}. Moreover, it follows from Theorem \ref{thm:main2} that the lower bound given by Theorem \ref{thm:main1} is sharp under  the hypothesis that there is no closed orbit with index $\pm p$ whenever $M$ admits a non-degenerate lacunary contact form with parity $\del \neq p\,(\bmod\,2)$. If we take the wrong parity the lower bound in Theorem \ref{thm:main1} can be negative: for instance, when $M=S^{2n+1}$ is the standard contact sphere with $n$ odd (resp. $n$ even) then it admits a non-degenerate lacunary contact form with odd (resp. even) parity and $r_M^e<0$ (resp. $r_M^o<0$) when $p$ is even (resp. $p$ is odd).
\end{remark}

\begin{remark}
\label{rmk:unnecessary orbits 5}
As already mentioned in Remark \ref{rmk:unnecessary orbits 2}, the assumption that $b_j<\infty$ for every $j \in \Z$ implies that closed orbits with zero mean index are \emph{unnecessary} from the homological point of view. In particular, if $\alpha$ is non-degenerate and lacunary then this assumption forces that every closed orbit of $\alpha$ has non-zero mean index.
\end{remark}

Theorem \ref{thm:main2} follows from the proof of Theorem \ref{thm:main1} and Lemma \ref{lemma:combinatorial} from \cite{Gu}, as explained in Section \ref{sec:proofmain2}; see Remarks \ref{rmk:lacunary1}, \ref{rmk:lacunary2} and \ref{rmk:lacunary3}. For the reader's convenience, we will provide an alternative easier proof in Section \ref{sec:proofmain2} following the lines of \cite{AM5}.

The proof of Theorem \ref{thm:main1} also yields the following multiplicity result concerning non-hyperbolic closed orbits when the contact form has finitely many geometrically distinct closed orbits. It clearly implies Theorem \ref{thm:non-hyp}. In what follows, we denote by $\#\P^{\text{non-hyp}}_\alpha$ the number of simple non-hyperbolic closed orbits of a contact form $\alpha$.

\begin{theorem}
\label{thm:non-hyp-general}
Let $(M^{2n+1},\xi)$ be a closed contact manifold satisfying either \ref{cond:F} or \ref{cond:NF}. Assume that $\HC_*(M)$ is periodic, $b_j = 0$ for all $j$ sufficiently negative, and $b_j < \infty$ for all $j \in \Z$.

Let $\alpha$ be a non-degenerate contact form on $M$. Assume that there exists an integer $p$, which is even or equal to $1$, such that $\alpha$ satisfies the following conditions:
\begin{itemize}
\item $\alpha$ has finitely many simple closed orbits;
\item $\alpha$ has no good periodic orbit $\ga$ such that $\cz(\ga)=\pm p$;
\item every closed orbit of $\alpha$ has non-vanishing mean index;
\item under \ref{cond:NF}, $\alpha$ is index-admissible. 
\end{itemize}
If $p>1$ assume that $M$ satisfies \ref{cond:PD}. Then, if $p$ is even (resp. $p=1$)
\[
\#\P^{\text{non-hyp}}_\alpha \geq r^{o,\text{non-hyp}}_M\ (\text{resp. }\#\P^{\text{non-hyp}}_\alpha \geq r^{e,\text{non-hyp}}_M),
\]
where
\begin{align*}
& r_M^{o,\text{non-hyp}} = 
\begin{cases}
2(-q\chi_+(M) + \sum_{j=-\infty}^q (-1)^jb_j) \ \text{if}\ p=0 \\
2(-q\chi_+(M) + \sum_{j=-\infty}^q (-1)^jb_j) + 2b_{q+p} - b_q \ \text{if}\ p\ \text{is even and}\ \geq 2, \\
\end{cases} \\
& r_M^{e,\text{non-hyp}} = 2(q\chi_+(M) - \sum_{j=-\infty}^{q} (-1)^jb_j) + b_{q-1} + 3b_{q+1} + 2b_0,
\end{align*}
and $q=\min\{s\D;\,s\D > P,\,s \in \N\}$.
\end{theorem}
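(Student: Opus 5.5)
The plan is to rerun the argument proving Theorem \ref{thm:main1}, keeping track of which simple orbits contribute to each count. Let $\ga_1,\dots,\ga_r$ be the simple closed orbits of $\alpha$; there are finitely many, all strongly non-degenerate with non-zero mean index. We apply the index recurrence Theorem \ref{thm:IRT} to their linearized flows, with $\ell_0$ large and all $d_j^\pm$, $k_{ij}^\pm$ divisible by $\D$ and by $2$. Then $d:=d_j^\pm$ is a multiple of $\D$ larger than $P$. By Proposition \ref{prop:periodicity}, the expression $d\chi_+(M)-\sum_{m\le d}(-1)^mb_m$ equals the same expression with $d$ replaced by $q$, where $q=\min\{s\D;\,s\D>P\}$. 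We also need $b_{d\pm1}=b_{q\pm1}$ and $b_{d+p}=b_{q+p}$.

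The core step is to compare the Morse inequalities (Proposition \ref{prop:morse_ineq}) over a window $[k_1,d+p]$ or $[k_1,d\pm1]$ with $k_1$ very negative against the resonance relation \eqref{eq:resonance}.
\begin{itemize}
\item The iterates $\ga_i^k$ with $k<k_{ij}$ have indices below $d$, up to a bounded error controlled by (i)–(ii).
\item Iterates with $k>k_{ij}$ have indices above $d$.
\item Summing $(-1)^{\cz}$ over $k<k_{ij}$ gives $k_{ij}\hat\chi(\ga_i)$ plus a bounded correction. Since $k_{ij}\approx d/|\mi(\ga_i)|$, the resonance relation turns the main terms into $d\chi_+(M)$.
\end{itemize}
The remaining correction terms come only from the ``jump'' iterates $\ga_i^{k_{ij}^\pm}$ and their neighbours $k_{ij}^\pm\pm\ell$. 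Using (ii) and the hypothesis that no good orbit has index $\pm p$ (hence none at $d\pm p$), each correction is bounded by a quantity attached to a single simple orbit.

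The key point for the non-hyperbolic version is this. If $\ga_i$ is hyperbolic, (i) gives $\cz(\ga_i^{k_{ij}^\pm})=\s_id$ exactly, and by (iii) it sits in the same position for both signs $\pm$. Its corrections under $+$ and under $-$ therefore cancel when the two inequalities (from $d^+$ and $d^-$) are added. For an elliptic-type orbit the contributions on the two sides sum to at most $2$, or $1$ in the bad-iterate case. In the proof of Theorem \ref{thm:main1} the bound $\#\P_\alpha\ge r_M$ came from bounding the total correction by $2r$. Here only the $r^{\text{non-hyp}}$ non-hyperbolic orbits contribute nonzero bounded terms. The hyperbolic ones contribute exactly zero to the symmetric combination, apart from a fixed sign. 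That sign produces the terms $b_{q-1}+3b_{q+1}$ and the $2b_0$ term in the case $p=1$, in place of $2b_{q+p}+b_q$.

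Concretely, for $p$ even the steps are as follows.
\begin{itemize}
\item Take the windows ending at $d+p$ and at $d-p$, reflecting via (iii) and, when $p>1$, via \ref{cond:PD}.
\item Add the two Morse inequalities.
\item Substitute the resonance relation.
\item Observe that the hyperbolic jump iterates have index exactly $d$ and appear with the same sign on both sides, so they drop out.
\end{itemize}
This yields $\#\P^{\text{non-hyp}}_\alpha\ge r^{o}_M$.

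For $p=1$ the hyperbolic orbits with $\s_i d$ in the range are even-indexed. Hyperbolic orbits with negative mean index instead contribute at $-d$, which is where the contribution $b_0$ (twice) enters, via the windows near index $0$ from the negative-mean-index orbits. The periodicity shift moves the window ends to $q-1$ and $q+1$.

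I expect the main obstacle to be the bookkeeping in the $p=1$ case. There one has to verify that the even-indexed hyperbolic jump iterates are exactly absorbed by the $b_{q\pm1}$ and $b_0$ terms, without double counting. Handling orbits of negative mean index requires a symmetric window near $0$, using that $b_j=0$ for $j\ll0$.
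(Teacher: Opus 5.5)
You do have the observation that drives the result: by item (i) of Theorem \ref{thm:IRT}, a hyperbolic $\gamma_i$ has $\mu(\gamma_i^{k_i})=\sigma_i d$ exactly. But your account of the count in Theorem \ref{thm:main1} is wrong in ways that matter, and the case $p=1$, the only case where the formula differs from Theorem \ref{thm:main1}, is neither derived nor correctly explained.

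The bound does not come from ``bounding a total correction by $2r$'' with per-orbit contributions ``at most $2$''. Read literally, that loses a factor $2$ in the main term. It comes from three pairwise disjoint sets of simple orbits:
\begin{itemize}
\item[(a)] those with $\sigma_i\mu(\gamma_i^{k_i})-d>p$. These are bounded below using the window $[-d-p,d+p]$, the Morse inequalities, and the identity $\sum_i k_i\hat\chi(\gamma_i)=d\chi_+(M)$ of Lemma \ref{lemma:resonance}. The window must be tied to $d$, so that orbits of negative mean index are also truncated at their jump iterates near $-d$. The identity is exact by integrality, not merely up to a bounded correction.
\item[(b)] those with $\sigma_i\mu(\gamma_i^{k_i})-d<-p$. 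By item (iii) these are the orbits with $\sigma_i\mu(\gamma_i^{k'_i})-d'>p$, so they receive the same lower bound.
\item[(c)] those with $\mu(\gamma_i^{k_i})=\pm d+j$, $|j|\le p$, counted by $\sum_{j=-p}^{p}(c_{d+j}+c_{-d+j}-c_j-c_{-j})$.
\end{itemize}
Two further corrections. The hypothesis on $\pm p$ does not give ``no orbit at $d\pm p$''. It only keeps the neighbouring iterates $\gamma_i^{k_i\pm\ell}$ away from $\sigma_i d\pm p$, so that $c_{\pm d\pm p}$ counts jump iterates, and the final estimate uses exactly that. Also, \ref{cond:PD} enters only in that final estimate, through $2\sum_{j=1}^p(-1)^jb_{d+j}=\sum_{j=-p}^p(-1)^jb_{d+j}-b_d$, not in any reflection of windows.

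With this in place the non-hyperbolic statement is short. A hyperbolic orbit lies in neither (a) nor (b), nor in the $j\neq0$ part of (c). So one discards only the $j=0$ term $c_d+c_{-d}-2c_0$ and redoes the final estimate. For even $p$ that term was not used in the lower bound anyway, which is why $r^{o,\text{non-hyp}}_M=r^o_M$. For $p=1$, discarding it removes the $-2c_0$ that cancelled the $+2c_0$ coming from $\sum_{j=-1}^{1}(-1)^jc_j=c_0$ (since $c_{\pm1}=0$). This leaves
\[
2\Big(q\chi_+(M)-\sum_{m\le q+1}(-1)^mb_m+c_0\Big)+c_{d-1}+c_{d+1}+c_{-d-1}+c_{-d+1}.
\]
Now use $-2\sum_{m\le q+1}(-1)^mb_m=-2\sum_{m\le q}(-1)^mb_m+2b_{q+1}$. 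Drop $c_{-d\pm1}\ge0$, and apply the weak Morse inequalities $c_0\ge b_0$ and $c_{d\pm1}\ge b_{d\pm1}=b_{q\pm1}$; the last equality is where $q>P$ is needed. This gives exactly $b_{q-1}+3b_{q+1}+2b_0$.

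So $2b_0$ is a surviving $2c_0$. It is not a contribution of hyperbolic orbits of negative mean index at $-d$, nor of ``windows near index $0$''. Your claim that hyperbolic orbits drop out ``apart from a fixed sign'' that produces these terms does not correspond to any actual computation. As written, the proposal does not establish the $p=1$ inequality.
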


This result follows from the proof of Theorem \ref{thm:main1}; see Remarks \ref{rmk:non-hyperbolic p even} and \ref{rmk:non-hyperbolic p odd}.

\begin{remark}
\label{rmk:strict ineq}
When $p$ is even, we can take $q=\min\{s\D;\,s\D \geq P,\,s \in \N\}$. The reason why we take the strict inequality when $p$ is odd is to ensure that $2(q\chi_+(M) - \sum_{j=-\infty}^{q} (-1)^jb_j) + b_{q-1} + 3b_{q+1} + 2b_0$ does not depend on the choice of $q$ (because $b_{q-1}=b_{q-1+\Delta}$). Note that, if we assume without loss of generality that $\D$ is even, as we do in this work, $q$ is even as well, and therefore the strict inequality $s\D > P$ is not necessary if $b_{\text{odd}}=0$. In particular, we can take $q=\min\{s\D;\,s\D \geq P,\,s \in \N\}$ in Theorem \ref{thm:non-hyp}.
\end{remark}

\begin{remark}
Note that, in principle, we may have $r_M^{e,\text{non-hyp}} > r_M^{e}$ although we are not aware of any example of such an $M$ carrying a contact form with finitely many closed orbits. Indeed, $r_M^{e,\text{non-hyp}} \leq r_M^{e}$ whenever $M$ admits an even lacunary contact form; see Remark \ref{rmk:2b_0 leq b_q}.
\end{remark}

\subsection{Necessity of the main hypothesis in Theorem \ref{thm:main1}}
\label{sec:hypotheses}

We show here that the hypothesis that there is no good periodic orbit with index $\pm p$ for some $p\geq 0$ is necessary for the results established in Theorem \ref{thm:main1}.

Let us consider first the case where $p$ is even. We constructed in \cite{AM0} a family of toric contact structures $\xi_k$, $k\in \N_0$, on $M=S^2 \times S^3$ such that, for every $k$,
\begin{itemize}
\item $(M,\xi_k)$ satisfies condition \ref{cond:F};
\item $\HC_*(M,\xi_k)$ satisfies all the requested properties of Theorem \ref{thm:main1};
\item $\xi_k$ admits a (toric) non-degenerate even lacunary contact form $\alpha_k$ with precisely 4 simple periodic orbits, all of which with positive mean index;
\item we have
\begin{equation}
\label{eq:CH-ex1}
\dim \HC_*(M,\xi_k) =
\begin{cases}
k & \text{if $* = 0$;} \\
2k+1  & \text{if $* = 2$;} \\
2k+2 & \text{if $* > 2$ and even;} \\
0 & \text{otherwise.}
\end{cases} 
\end{equation}
\end{itemize}
Let $r^{o}_{\xi_k}$ be $r^{o}_M$ corresponding to $(M,\xi_k)$ and a fixed even $p\geq 2$. We have that $P=4$ and $\D=2$ so that we can take $q=4$. Moreover, it is clear from \eqref{eq:CH-ex1} that $\chi_+(M,\xi_k)=k+1$. Thus,
\begin{align*}
4 & = 2(q\chi_+(M) - \sum_{j=-\infty}^{q} (-1)^jb_j) + b_q \\
& = 2(q\chi_+(M) - \sum_{j=-\infty}^{q} (-1)^jb_j) + 2k+2
\end{align*}
which implies that
\begin{align*}
& 2(q\chi_+(M) - \sum_{j=-\infty}^{q} (-1)^jb_j) = 0\quad\text{if}\ k=1, \\
& 2(q\chi_+(M) - \sum_{j=-\infty}^{q} (-1)^jb_j) < 0\quad\text{if}\ k>1.
\end{align*}

Assume now that the hypothesis in Theorem \ref{thm:main1} that there is no good periodic orbit $\ga$ with $\cz(\ga)=\pm p$ for some even $p \geq 2$ can be dropped, so that we could apply Theorem \ref{thm:main1} to $\alpha_k$. When $k=1$ we have that
\[
r^o_{\xi_k} = 2k + 2
\]
and therefore the equality
\[
\#\P_{\alpha_k} = r^o_{\xi_k} = 4.
\]
But $\alpha_k$ is not odd lacunary (it is even lacunary), which contradicts the assertion in Theorem \ref{thm:main1} that if $\#\P_{\alpha_k} = r^o_{\xi_k}$ then $\alpha_k$ must be odd lacunary.

Furthermore, when $k>1$ the lower bound $r^o_{\xi_k}$ is given by
\[
r^o_{\xi_k}= -2(q\chi_+(M) - \sum_{j=-\infty}^{q} (-1)^jb_j) + 2k+2 > 4
\]
contradicting the fact that $\alpha_k$ has precisely 4 simple periodic orbits.

Notice that, in both situations, the point here is that if $k\geq 1$ then $\HC_*(M,\xi_k) \neq 0$ for every even $*\geq 0$ and therefore we must have periodic orbits with index $p$ for every even $p\geq 0$.

\begin{remark}
If we take the bound $r^{o}_{\xi_k}$ with $p=0$, arguing as above we also obtain a contradiction for $k\geq 3$.
\end{remark}

Concerning the hypothesis that there is no good periodic orbit with index $\pm p$ for some $p$ odd, we proceed in a similar fashion. One can show that there is a family of contact structures $\xi_k$, $k\in \N_0$, on $M_k:=L^3_{k+1}(1,k)$ such that, for every $k$,
\begin{itemize}
\item $(M_k,\xi_k)$ satisfies condition \ref{cond:F};
\item $\HC_*(M_k,\xi_k)$ satisfies all the required properties of Theorem \ref{thm:main1};
\item $\xi_k$ admits a (toric) non-degenerate odd lacunary contact form $\alpha_k$ with precisely 2 simple periodic orbits, all of which with positive mean index;
\item we have
\begin{equation}
\label{eq:CH-ex2}
\dim \HC_*(M_k,\xi_k) =
\begin{cases}
k & \text{if $* = 1$;} \\
1 + k  & \text{if $* \geq 3$ and odd;} \\
0 & \text{otherwise.}
\end{cases} 
\end{equation}
\end{itemize}
As above, let $r^{e}_{\xi_k}$ be $r^{e}_{M_k}$ corresponding to $(M_k,\xi_k)$ for any odd $p$. We have that $P=3$ and $\D=2$ so that we can take $q=4$. Moreover, it is clear from \eqref{eq:CH-ex2} that $\chi_+(M,\xi_k)=-(k+1)/2$. Thus,
\[
2 = 2(-q\chi_+(M) + \sum_{j=-\infty}^{q} (-1)^jb_j).
\]

Assume now that the hypothesis in Theorem \ref{thm:main1} that there is no good periodic $\ga$ with $\cz(\ga)=\pm p$ for some odd $p>0$ can be dropped, so that we could apply Theorem \ref{thm:main1} to $\alpha_k$. The lower bound $r^e_{\xi_k}$ is given by
\begin{align*}
r^e_{\xi_k} & = -2(-q\chi_+(M) + \sum_{j=-\infty}^{q} (-1)^jb_j) + b_{q} + 2b_{q+p} \\
& = -2(-q\chi_+(M) + \sum_{j=-\infty}^{q} (-1)^jb_j) + 2(k + 1) \\
& = -2 + 2k + 2.
\end{align*}

Thus, when $k=1$ we have the equality
\[
\#\P_{\alpha_k} = r^e_{\xi_k} = 2
\]
but $\alpha_k$ is not even lacunary (it is odd lacunary), contradicting the assertion in Theorem \ref{thm:main1} that if $\#\P_{\alpha_k} = r^e_{\xi_k}$ then $\alpha_k$ must be even lacunary.

Moreover, when $k>1$ the lower bound $r^e_{\xi_k}$ satisfies
\[
r^e_{\xi_k} = 2k > 2
\]
contradicting the fact that $\alpha_k$ has precisely 2 simple periodic orbits.

Notice that the point here, as in the previous case where $p$ is even, is that if $k\geq 1$ then $\HC_*(M,\xi_k) \neq 0$ for every odd $*\geq 1$ and therefore we must have periodic orbits with index $p$ for every odd $p\geq 1$.

\section{Proof of Theorem \ref{thm:main1}}
\label{sec:proofmain1}

The main tool used in the proof is the positive equivariant symplectic homology. Recall from Section \ref{sec:ESH} that both conditions \ref{cond:F} and \ref{cond:NF} ensure that this homology (for $M$ or the filling) with integer grading is well-defined. Then the proof is the same in both cases of the theorem, \ref{cond:F} and \ref{cond:NF}, and relies only on the condition shared by these cases that $\alpha$ has no good periodic orbits $\ga$ such that $\cz(\ga)=\pm p$ for some integer $p$ and that the mean index of every closed orbit is not zero. Recall that under \ref{cond:NF} we assume that $\alpha$ is index-admissible.

\subsection{Outline of the proof}
\label{sec:proofmain1-outline}
First of all, let us give a brief idea of the proof of Theorem \ref{thm:main1}. Assume that $\alpha$ has finitely many distinct simple closed orbits $\ga_1,\dots,\ga_r$. Using the resonance relation \eqref{eq:resonance} we obtain in Lemma \ref{lemma:resonance} an expression for the truncated mean Euler characteristic of $M$. From this expression we obtain, after a careful analysis of the contributions of each orbit to the alternating sum $\sum_{m=-d-p}^{d+p} (-1)^m c_m$, the relation
\begin{equation}
\label{eq:truncate1}
\sum_{m=-d-p}^{d+p} (-1)^m c_m 
= d\chi_+(M) + \sum_{j=-p}^p (-1)^jc_{j}
- \sum_{\substack{i=1\\ |\cz(\ga_i^{k_i})|>d+p}}^r \chi(\ga_i^{k_i}),
\end{equation}
where the even numbers $d,k_1,\dots,k_r$ are given by Theorem \ref{thm:IRT} (with $\eta$, $\ell_0$ and $N$ suitably chosen) and $c_m$ is the $m$-th Morse type number.

In essence, Lemma \ref{lemma:resonance} asserts that the mean Euler characteristic can be exactly evaluated as a finite sum once the summation for every orbit $\ga_i$ is truncated at a suitably chosen order of iteration $k_i$.  Truncating the sum between degrees $-d-p$ and $d+p$ for all orbits, as in the above formula, introduces an error given by the second and third terms on the right. Note that the third error term involves only the orbits $\ga_i$ with $|\cz(\ga_i^{k_i})|>d+p$. We denote the number of such orbits by $r_+$. Using \eqref{eq:truncate1} and the periodicity of $\HC_*(M)$ we can show that
\begin{align*}
r_+ \geq & (-1)^{p+1}q\chi_+(M) + (-1)^p\sum_{m=-\infty}^{q+p} (-1)^m b_m + (-1)^{p+1} \sum_{j=-p}^p (-1)^jc_{j} \\
& + \#\{1\leq i\leq r;\,\cz(\ga_i^{k_i})=p\,(\bmod\,2),\ |\cz(\ga_i^{k_i})|>d+p\ \text{and}\ \ga_i^{k_i}\ \text{is good}\}.
\end{align*}

Repeating the above argument and applying again Theorem \ref{thm:IRT}, we can find even numbers $d',k'_1,\dots,k'_r$ such that
\begin{equation}
\label{eq:truncate2}
\sum_{m=-d'-p}^{d'+p} (-1)^m c_m 
= d'\chi_+(M) + \sum_{j=-p}^p (-1)^jc_{j}
- \sum_{\substack{i=1\\ |\cz(\ga_i^{k'_i})|>d'+p}}^r \chi(\ga_i^{k'_i}) 
\end{equation}
Here the last error term involves only the orbits with $|\cz(\ga_i^{k'_i})|>d'+p$. Denoting by $r_-$ the number of such orbits, we obtain the same lower bound for $r_-$:
\begin{align*}
r_- \geq & (-1)^{p+1}q\chi_+(M) + (-1)^p\sum_{m=-\infty}^{q+p} (-1)^m b_m + (-1)^{p+1} \sum_{j=-p}^p (-1)^jc_{j} \\
& + \#\{1\leq i\leq r;\,\cz(\ga_i^{k'_i})=p\,(\bmod\,2),\ |\cz(\ga_i^{k'_i})|>d+p\ \text{and}\ \ga_i^{k'_i}\ \text{is good}\}.
\end{align*}
By item \ref{cond:iii} of Theorem \ref{thm:IRT},
\[
|\cz(\ga^{k'_i})|>d'+p\ \text{if and only if}\ |\cz(\ga_i^{k_i})|<d-p.
\]

Thus $r_+$ and $r_-$ count disjoint sets of simple closed orbits and therefore $r \geq r_-+r_+$. Now, one can show that there are more
\[
r' := \sum_{j=-p}^p (c_{d+j}+c_{-d+j}-c_j-c_{-j})
\]
new simple closed orbits. This term counts simple orbits $\ga_i$ such that $|\cz(\ga_i^{k_i})| \in [d-p,d+p]$.

Set
\[
r^e = \#\{1\leq i\leq r;\,\cz(\ga_i^{k_i})\ \text{is even}\ \text{and}\ \ga_i^{k_i}\ \text{is good}\}
\]
and
\[
r^o = \#\{1\leq i\leq r;\,\cz(\ga_i^{k_i})\ \text{is odd}\ \text{and}\ \ga_i^{k_i}\ \text{is good}\}
\]
Using the second error term on the right of \eqref{eq:truncate1} and \eqref{eq:truncate2}, the Morse inequalities, the Poincar\'e duality property \ref{cond:PD}, and our assumption that there is no good periodic orbit with index $\pm p$, a (non-trivial) computation shows that, if $p=0$,
\begin{align*}
r & \geq r_-+r_++r' \\
& \geq 2(-q\chi_+(M) + \sum_{m=-\infty}^{q} (-1)^m b_m) + r^e,
\end{align*}
if $p \geq 2$ is even,
\begin{align*}
r & \geq r_-+r_++r' \\
& \geq 2(-q\chi_+(M) + \sum_{m=-\infty}^{q} (-1)^m b_m) + 2b_{q+p} - b_q + r^e,
\end{align*}
and if $p$ is odd,
\begin{align*}
r & \geq r_-+r_++r' \\
&\geq 2(q\chi_+(M) - \sum_{m=-\infty}^{q} (-1)^m b_m) + 2b_{q+p} + b_q + r^o.
\end{align*}
Using these inequalities, one can show that if the lower bound of the theorem is an equality then $\alpha$ is lacunary.

\subsection{Proof of the theorem}
We begin the proof by assuming that $\alpha$ has finitely many distinct simple closed orbits $\ga_1,\dots,\ga_r$. Our goal is to establish the lower bound on $r$ asserted by the theorem and to show that, if this bound is attained, then $\alpha$ is lacunary. (In fact, we prove the converse as well, namely that this bound is attained if and only if $\alpha$ is lacunary, under the assumption that $\alpha$ has finitely many closed orbits; see Remarks \ref{rmk:lacunary1}, \ref{rmk:lacunary2}, and \ref{rmk:lacunary3}. Under our hypotheses, the fact that a non-degenerate lacunary contact form has finitely many periodic orbits follows from Lemma \ref{lemma:combinatorial} from \cite{Gu}; see Section \ref{sec:proofmain2}, where we also provide a simpler independent proof of this converse (Theorem \ref{thm:main2}).)

By assumption, the mean index of each $\ga_i$ is non-zero. Set
\begin{align*}
\lo = \max\bigg\{p,\,\max_{1 \leq i \leq r} \min\big\{k_0 \in \N \ \big|\ & \cz(\ga_i^{k+\l}) \geq |\cz(\ga_i^k)| + 2n + p + 1\ \forall k \geq 1,\ \forall \l \geq k_0 \ \text{if } \mi(\ga_i) > 0, \\
& \cz(\ga_i^{k+\l}) \leq -|\cz(\ga_i^k)| - 2n - p - 1\ \forall k \geq 1,\ \forall \l \geq k_0 \ \text{if } \mi(\ga_i) < 0 \big\}\bigg\}.
\end{align*}
Note that $\lo$ is well-defined, i.e., the minima above are finite, by the definition of the mean index.

By Theorem \ref{thm:IRT}, given $N \in \N$, $\eta>0$, and $\lo$ as above, there exist two sequences of integer vectors $(d^\pm_j,k^\pm_{1j},\dots,k^\pm_{rj})$ satisfying conditions \ref{cond:i}, \ref{cond:ii}, and \ref{cond:iii}, such that all entries $d^\pm_j,k^\pm_{1j},\dots,k^\pm_{rj}$ are divisible by $N$. We will only need one vector from each sequence, and we set
\begin{equation}
\label{eq:integers}
(d,k_1,\dots,k_r):=(d^+_1,k^+_{11},\dots,k^+_{r1}), \qquad (d',k'_1,\dots,k'_r):=(d^-_1,k^-_{11},\dots,k^-_{r1}).
\end{equation}

The following lemma, which provides an expression for the truncated mean Euler characteristic (see Section \ref{sec:resonance}), is one of the key steps in the proof; cf.\ \cite[Sublemma 5.2]{AM2}, \cite[Lemma 5.1]{GGM2}, and \cite[Lemma 4.5]{AM5}. Recall that, by the periodicity of the equivariant symplectic homology,
\[
\chi_+(M) = \frac{\r}{\D},
\]
where $\D$ is the period of $\HC_*(M)$ and $\r=\sum_{m=P}^{P+\D-1} (-1)^m b_m$.

\begin{lemma}
\label{lemma:resonance}
The numbers $N$ and $\eta$ can be chosen such that $d=s\D$ for some integer $s$ and
\[
\sum_{i=1}^r \sum_{k=1}^{k_{i}} \chi(\ga_i^k) = \sum_{i=1}^r k_{i} \hat{\chi}(\gamma_i) = d \chi_+(M) = s\r.
\]
The same holds for $d',k'_1,\dots,k'_r$.
\end{lemma}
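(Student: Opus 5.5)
We have to prove Lemma \ref{lemma:resonance}: choose $N$ and $\eta$ so that $d=s\D$ and $\sum_i\sum_{k=1}^{k_i}\chi(\ga_i^k)=\sum_i k_i\hat\chi(\ga_i)=d\chi_+(M)$.

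\textbf{Choice of $N$.} The plan is to take $N$ to be a common multiple of $\D$ and of $2$, i.e.\ even. Each $k_i$ and $d$ is then even and divisible by $\D$, so $d=s\D$ with $s\in\N$.

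\textbf{First equality.} Since $\ga_i$ is non-degenerate, the sequence $\chi(\ga_i^k)$ is periodic in $k$ with period dividing $2$:
\begin{itemize}
\item if $\ga_i^2$ is good, every iterate is good with index of the same parity, so $\chi(\ga_i^k)=(-1)^{\cz(\ga_i)}$ for all $k$;
\item if $\ga_i^2$ is bad, odd iterates contribute $(-1)^{\cz(\ga_i)}$ and even iterates contribute $0$.
\end{itemize}
In either case the average over a full period is exactly $\hat\chi(\ga_i)$, by the formula in Section \ref{sec:resonance}. Since $k_i$ is even, $\sum_{k=1}^{k_i}\chi(\ga_i^k)=k_i\hat\chi(\ga_i)$ exactly.

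\textbf{Second equality.} I would use the resonance relation \eqref{eq:resonance}, with both signs of mean index present. Write $\hat\mu_i=\mi(\ga_i)$ and $\s_i$ for its sign. By property (i) of Theorem \ref{thm:IRT}, together with $\mi(\ga_i^{k})=k\hat\mu_i$, we get $|k_i\hat\mu_i-\s_i d|<\eta$, i.e.
\[
k_i=\frac{\s_i d}{\hat\mu_i}+\epsilon_i,\qquad |\epsilon_i|<\frac{\eta}{|\hat\mu_i|}.
\]
Summing gives
\[
\sum_i k_i\hat\chi(\ga_i)=d\sum_i \s_i\frac{\hat\chi(\ga_i)}{\hat\mu_i}+\sum_i\epsilon_i\hat\chi(\ga_i).
\]

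\textbf{Main obstacle: the negative-mean-index orbits.} The coefficient $\sum_i\s_i\hat\chi(\ga_i)/\hat\mu_i$ equals $\chi_+(M)+\chi_-(M)$ by \eqref{eq:resonance}, but we need exactly $\chi_+(M)$. So I must show $\chi_-(M)=0$. This holds because $b_j=0$ for all sufficiently negative $j$, so the defining limit of $\chi_-(M)$ vanishes. Hence
\[
\sum_i \s_i\frac{\hat\chi(\ga_i)}{\hat\mu_i}=\chi_+(M)+\chi_-(M)=\chi_+(M),
\]
the only place where the bounded-below hypothesis enters.

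\textbf{Rounding.} The error term satisfies $|\sum_i\epsilon_i\hat\chi(\ga_i)|\le\sum_i\eta/|\hat\mu_i|$, which is $<1/2$ for $\eta$ small. Both remaining quantities are integers:
\begin{itemize}
\item $\sum_i k_i\hat\chi(\ga_i)$ is an integer, since each $\hat\chi(\ga_i)\in\tfrac12\Z$ and $k_i$ is even;
\item $d\chi_+(M)=s\r$ is an integer.
\end{itemize}
An integer-valued difference smaller than $1/2$ in absolute value must be $0$, so $\sum_i k_i\hat\chi(\ga_i)=d\chi_+(M)=s\r$.

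\textbf{The primed vector.} The identical argument applies to $(d',k'_1,\dots,k'_r)$, since property (i) of Theorem \ref{thm:IRT} holds for both sequences.
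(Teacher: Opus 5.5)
Your proof is correct and follows essentially the same route as the paper: take $N$ a multiple of the (even) period $\D$; get the first equality from the period-two behaviour of $\chi(\ga_i^k)$; get the second from property (i) of Theorem \ref{thm:IRT}, the resonance relation and an integrality/rounding argument; and use $\chi_-(M)=0$, which follows from the vanishing of $b_j$ in very negative degrees. The only cosmetic difference is that you invoke $\chi_-(M)=0$ first and treat both signs of mean index in a single sum before rounding, whereas the paper rounds the positive and negative parts separately and only then notes that $d\chi_-(M)=0$.
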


\begin{proof}
Let $N$ be any (positive) integer multiple of $\D$. Since $\D$ is even so is $N$. The first equality follows from the (strong) non-degeneracy of $\ga_1,\dots,\ga_r$ since the numbers $k_i$ are even. The third equality follows just from the facts that $d=s\D$ and $\chi_+(M)=\r/\D$. To prove the second equality, take $\eta$ sufficiently small such that $\eta \sum_{i=1}^r \big| \frac{\hat\chi (\gamma_i)}{\mi(\ga_i)}\big|<1$. Using the resonance relation \eqref{eq:resonance} we conclude that
\begin{align*}
d \chi_\pm(M)
& = \sum_{\{1\leq i\leq r;\ \pm\mi(\ga_i)>0\}} \pm\frac{d\hat\chi (\gamma_i)}{\mi (\gamma_i)} \\
& = \sum_{\{1\leq i\leq r;\ \pm\mi(\ga_i)>0\}}  k_i\hat\chi (\gamma_i) + \sum_{\{1\leq i\leq r;\ \pm\mi(\ga_i)>0\}}  \frac{(\pm d-k_i\mi(\ga_i))\hat\chi(\gamma_i)}{\mi(\ga_i)} \\
& = \sum_{\{1\leq i\leq r;\ \pm\mi(\ga_i)>0\}}  k_i\hat\chi (\gamma_i) +  \sum_{\{1\leq i\leq r;\ \pm\mi(\ga_i)>0\}}  \frac{(\pm d-\mi(\ga_i^{k_i}))\hat\chi (\gamma_i)}{\mi(\ga_i)}. 
\end{align*}
By property \ref{cond:i} of Theorem \ref{thm:IRT} and the condition on $\eta$,
\[
\bigg| \sum_{\{1\leq i\leq r;\ \pm\mi(\ga_i)>0\}} \frac{(\pm d-\mi(\ga_i^{k_i}))\hat\chi(\gamma_i)}{\mi(\ga_i)}\bigg| = \bigg| \sum_{i=1}^r \frac{(\s_i d-\mi(\ga_i^{k_i}))\hat\chi(\gamma_i)}{\mi(\ga_i)}\bigg| < \eta \sum_{i=1}^r\bigg| \frac{\hat\chi (\gamma_i)}{\mi(\ga_i)}\bigg| <1.
\]
Note that by our choice of $N$ the numbers $d\chi_\pm(M)$ and $k_i\hat\chi (\gamma_i)$ for all $i$ are integers. Therefore,
\[
d \chi_\pm(M) =  \sum_{\{1\leq i\leq r;\ \pm\mi(\ga_i)>0\}}  k_i\hat\chi (\gamma_i).
\]
Now, observe that, by our hypotheses, $\sum_{\{1\leq i\leq r;\,\mi(\ga_i)<0\}} k_i\hat\chi (\gamma_i)=d\chi_-(M)=0$. Consequently, since every periodic orbit has non-zero mean index,
\[
\sum_{i=1}^r  k_i\hat\chi (\gamma_i)  = \sum_{\{1\leq i\leq r;\,\mi(\ga_i)<0\text{ or }\mi(\ga_i)>0\}} k_i\hat\chi (\gamma_i)  =  \sum_{\{1\leq i\leq r;\,\mi(\ga_i)>0\}} k_i\hat\chi (\gamma_i) = d\chi_+(M).
\]
Obviously, the same argument works for $d',k'_1,\dots,k'_r$.
\end{proof}

The next lemma will be useful when $p>1$ in Theorem \ref{thm:main1}. It establishes that suitable prequantizations $M$ of symplectic manifolds $B$ have a symmetry in $\HC_*(M)$ for big enough degrees, which follows easily from the Poincar\'e duality on $B$; cf. Propositions \ref{prop:PD finite} and \ref{prop:PD preq_orb} and Remark \ref{rmk:PD}.

\begin{proposition}
\label{prop:PD preq}
Let $(M^{2n+1},\xi)$ be a closed contact manifold satisfying either \ref{cond:F} or \ref{cond:NF}. Assume that $M$ is a prequantization of a positive monotone symplectic manifold $(B,\om)$ such that $\H_{\text{odd}}(B;\Q)=0$. Under condition \ref{cond:NF} assume furthermore that the minimal Chern number $c_B$ of $B$ satisfies $c_B>1$. Given $p\geq 0$ we can choose $d$ large enough such that
\[
b_{d+j}=b_{d-j}
\]
for every $j \in [-p,p]$.
\end{proposition}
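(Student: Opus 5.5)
The plan is to compute $\HC_*(M)$ explicitly for such a prequantization and read off the symmetry from Poincar\'e duality on $B$. Let $\beta$ be the contact form generating the free $S^1$-action. Its closed orbits come in Morse--Bott families $B_k\cong B$, one for each covering multiplicity $k\in\N$. Positive monotonicity $c_1(TB)=\lambda[\om]$ with $\lambda>0$ makes $\beta$ index-positive: the Robbin--Salamon index of the $k$-th family is $\rs(B_k)=2kc$ for a fixed constant $c>0$. (With the normalization $[\om]$ primitive, $c$ is $c_B$ up to the factor coming from monotonicity.) Under \ref{cond:NF}, the hypothesis $c_B>1$ guarantees index-admissibility of a non-degenerate perturbation.

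Perturb $\beta$ by a perfect Morse function $f$ on $B$, or use the Morse--Bott spectral sequence as in \cite{AM5,GGM2}. The $k$-th family then contributes generators in degrees
\[
2kc-n+\operatorname{ind}_f(x),\qquad x\in\operatorname{Crit}(f),
\]
that is, a shifted copy of $\H_*(B;\Q)$ in degrees $[2kc-n,\,2kc+n]$. Since $\H_{\text{odd}}(B;\Q)=0$, all generators have degree $\equiv n \pmod 2$. Hence the differential vanishes for index reasons and
\[
\HC_*(M)\cong\bigoplus_{k\ge1}\H_{*-2kc+n}(B;\Q)\otimes\Nov,
\]
which is formula \eqref{eq:ESHpreq}. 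Consequently
\[
b_m=\sum_{k\ge1}\beta_{m-2kc+n}(B),
\]
where $\beta_i(B)=\dim\H_i(B;\Q)$ vanishes for $i\notin[0,2n]$.

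Next comes the symmetry. For $m$ large, every $k$ with $2kc-n\le m$ that could contribute is present; the truncation near $k=1$ only affects low degrees. So for $m\ge P:=2c+n$ we have $b_m=\sum_{k\in\Z}\beta_{m-2kc+n}$. Define $g(m)=\sum_{k\in\Z}\beta_{m-2kc+n}$. Poincar\'e duality gives $\beta_i=\beta_{2n-i}$, so
\[
g(2Kc-m)=\sum_k\beta_{2(K-k)c-m+n}=\sum_{k'}\beta_{2n-(2k'c-m+n)}=g(m).
\]
Thus $g$ is symmetric about every multiple $2Kc$ of $2c$, as well as $2c$-periodic. Choose $d=2Kc$ with $d-p\ge P$. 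Then for $j\in[-p,p]$ both $d\pm j\ge P$, so $b_{d+j}=g(d+j)=g(d-j)=b_{d-j}$.

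The main obstacle is the first step: justifying the Morse--Bott computation with the correct grading under both \ref{cond:F} and \ref{cond:NF}. That requires the index normalization of the trivialization via $\sec$, which must agree with the capping, and in the \ref{cond:NF} case it requires index-admissibility, where $c_B>1$ is used. For this I would invoke the computations in \cite{AM5,GGM2} rather than redo them. The remaining point, that the shift is exactly $2kc-n$ so that the centers are $2kc$, follows from $\rs(B_k)=k\,\rs(B_1)$ for the periodic flow.
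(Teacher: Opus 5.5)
Your proposal is correct and follows the paper's route. Both quote $\HC_*(M)\cong\bigoplus_{m\in\N}\H_{*-m\bD+n}(B;\Nov)$ from \cite{AM5,GGM2}, extend the sum to all $m\in\Z$ in large degrees (the added terms vanish), and conclude by Poincar\'e duality on $B$ and a reindexing; the paper takes $d=2s\bD>n+p$, but any sufficiently large multiple of $\bD=2c$ works, as in yours.

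One small wording slip: $g(2Kc-m)=g(m)$ is a reflection about $Kc$, not $2Kc$. Since it holds for every integer $K$, replacing $K$ by $2K$ gives exactly the symmetry $g(d+j)=g(d-j)$ about $d=2Kc$ that you use.
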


\begin{proof}
From the assumptions on $M$ we have that $\HC_*(M)$ is well-defined and given by
\begin{equation}
\label{eq:ESHpreq0}
\HC_\ast(M) \cong \bigoplus_{m\in\N} \H_{\ast -m\bD+n} (B; \Nov). 
\end{equation}
where $\bD$ is the Robbin-Salamon index of the simple orbit of the $S^1$-action on $M$ which is an even positive integer since $B$ is positive monotone; see Section \ref{sec:proof r_M} for details.

Choose $N$ such that
\begin{equation}
\label{eq;condition d preq}
d=2s\bD\quad\text{and}\quad d>n+p.
\end{equation}
Given $j \in \Z$ we have from \eqref{eq:ESHpreq0} that
\[
b_{d+j} = \dim \oplus_{m=1}^\infty \H_{n+2(s-m)\bD+j}(B;\Q).
\]
Since $2s\bD=d>n+p$ we clearly have that
\[
b_{d+j} = \dim \oplus_{m=-\infty}^\infty \H_{n+2(s-m)\bD+j}(B;\Q)
\]
for every $j \in [-p,p]$. Thus, by Poincar\'e duality,
\begin{align*}
b_{d+j} & = \dim \oplus_{m=-\infty}^\infty \H_{n-2(s-m)\bD-j}(B;\Q) \\
& = \dim \oplus_{m=-\infty}^\infty \H_{n+2(s-m)\bD-j}(B;\Q) \\
& = b_{d-j}
\end{align*}
for every $j \in [-p,p]$.
\end{proof}

Now, for the sake of simplicity, we will split the proof of Theorem \ref{thm:main1} in three main steps.

\vskip .3cm
\noindent
{\bf Step 1. $\alpha$ has at least }$(-1)^{p+1}q\chi_+(M) + (-1)^p\sum_{m=-\infty}^{q+p} b_m + (-1)^{p+1} \sum_{j=-p}^p (-1)^jc_{j} + \#\{1\leq i\leq r;\,\cz(\ga_i^{k_i})=p\,(\bmod\,2),\ |\cz(\ga_i^{k_i})|>d+p\ \text{and}\ \ga_i^{k_i}\ \text{is good}\}$ {\bf simple closed orbits}
\vskip .2cm

\vskip .2cm

Fix $N$ and $\eta$ as in Lemma \ref{lemma:resonance}. (In particular, $N$ is even.)  Taking $N$ big enough, we can assume that $d$ and $k_i$ are big enough such that
\begin{equation}
\label{eq:d1}
d>\max\{p+P,p+n,q\},
\end{equation}
\begin{equation}
\label{eq:d2}
-d + p < \bmin := \min\{m \in \Z;\, b_m \neq 0\},
\end{equation}
and
\begin{equation}
\label{eq:k_i}
\lo + 2 \leq \min_{1\leq i \leq r} k_i.
\end{equation}
Condition \eqref{eq:d2} can be achieved because, by assumption, $\bmin > -\infty$. We also take $N$ big enough such that $d$ satisfies Propositions \ref{prop:PD preq} and \ref{prop:PD finite} whenever $M$ meets \ref{cond:PD}.

Clearly, $\eta$ can be chosen so small that the vector $(d,k_{1},\dots,k_{r})=(d^+_1,k^+_{11},\dots,k^+_{r1})$ given by Theorem \ref{thm:IRT} satisfies
\begin{equation}
\label{eq:IRT1}
\cz(\ga_i^{k_i-\l}) = \s_id - \cz(\ga_i^\l),
\end{equation}
\begin{equation}
\label{eq:IRT2}
\cz(\ga_i^{k_i+\l}) = \s_id + \cz(\ga_i^\l),
\end{equation}
and
\begin{equation}
\label{eq:IRT3}
|\cz(\ga_i^{k_i}) - \s_id| \leq n
\end{equation}
for every $1\leq i \leq r$ and $1 \leq \l \leq \lo$.  (Here, since $N$ is even, the integers $d$ and $k_i$ are even.) For each periodic orbit $\ga_i$, we will consider five types of iterates outside $\ga_i^{k_i}$:
\begin{itemize}
\item[(A)] $\ga_i^{k_i-\l}$ with $\l>\lo$;
\item[(B)] $\ga_i^{k_i-\l}$ with $1\leq \l \leq \lo$ and $\cz(\ga_i^\l) \notin [-p,p]$;
\item[(C)] $\ga_i^{k_i+\l}$ with $1\leq \l \leq \lo$ and $\cz(\ga_i^\l) \notin [-p,p]$;
\item[(D)] $\ga_i^{k_i+\l}$ with $\l>\lo$;
\item[(E)] $\ga_i^{k_i\pm\l}$ with $1\leq \l \leq \lo$ and $\cz(\ga_i^\l) \in [-p,p]$.
\end{itemize}
Let us analyze the contributions of these iterates to the Morse type numbers appearing in the alternating sum
\begin{equation}
\label{eq:c_m}
\sum_{m=-d-p}^{d+p} (-1)^m c_m,
\end{equation}
where, as in Section \ref{sec:morseineq}, $c_m$ is the number of good closed orbits of index $m$.  First, class (A) iterates have index with absolute value $<d+p$ and hence all good orbits here contribute to \eqref{eq:c_m}. Indeed, by definition of $\lo$, we have $|\cz(\ga_i^{k_i})| \geq |\cz(\ga_i^{k_i-\l})| + 2n+p+1$ for every $\l \in (\lo,k_i)$ which, combined with \eqref{eq:IRT3}, implies that $|\cz(\ga_i^{k_i-\l})| \leq d-n-p-1$ for all $\l \in (\lo,k_i)$.  Class (D) orbits do not contribute to \eqref{eq:c_m} since $|\cz(\ga_i^{k_i+\l})| \geq |\cz(\ga_i^{k_i})| + 2n+p+1 \geq d + n+p+1$ for every $\l>\lo$, where the last inequality again follows from \eqref{eq:IRT3}. In turn, class (E) does contribute to \eqref{eq:c_m} by equations \eqref{eq:IRT1} and \eqref{eq:IRT2}.

In order to understand the contributions from classes (B) and (C), let us further divide each of them into four subclasses:

\begin{itemize}
\item[(B1+)] $\ga_i^{k_i-\l}$ with $1\leq \l \leq \lo$ if $\cz(\ga_i^\l) \geq -p$, $\mi(\ga_i)>0$ and $\cz(\ga_i^\l) \notin [-p,p]$,
\item[(B2+)] $\ga_i^{k_i-\l}$ with $1\leq \l \leq \lo$ if $\cz(\ga_i^\l) < -p$ and $\mi(\ga_i)>0$,
\item[(B1--)] $\ga_i^{k_i-\l}$ with $1\leq \l \leq \lo$ if $\cz(\ga_i^\l) > -p$, $\mi(\ga_i)<0$ and $\cz(\ga_i^\l) \notin [-p,p]$,
\item[(B2--)] $\ga_i^{k_i-\l}$ with $1\leq \l \leq \lo$ if $\cz(\ga_i^\l) \leq -p$, $\mi(\ga_i)<0$ and $\cz(\ga_i^\l) \notin [-p,p]$,
\end{itemize}
and
\begin{itemize}
\item[(C1+)] $\ga_i^{k_i+\l}$ with $1\leq \l \leq \lo$ if $\cz(\ga_i^\l) \geq -p$, $\mi(\ga_i)>0$ and $\cz(\ga_i^\l) \notin [-p,p]$,
\item[(C2+)] $\ga_i^{k_i+\l}$ with $1\leq \l \leq \lo$ if $\cz(\ga_i^\l) < -p$ and $\mi(\ga_i)>0$.
\item[(C1--)] $\ga_i^{k_i+\l}$ with $1\leq \l \leq \lo$ if $\cz(\ga_i^\l) > -p$, $\mi(\ga_i)<0$ and $\cz(\ga_i^\l) \notin [-p,p]$,
\item[(C2--)] $\ga_i^{k_i+\l}$ with $1\leq \l \leq \lo$ if $\cz(\ga_i^\l) \leq -p$, $\mi(\ga_i)<0$ and $\cz(\ga_i^\l) \notin [-p,p]$.
\end{itemize}

Now all of the good orbits in class (B1+) contribute to \eqref{eq:c_m}, while class (B2+) makes no contribution to \eqref{eq:c_m}. Indeed, by \eqref{eq:IRT1}, $\cz(\ga_i^{k_i-\l}) = d - \cz(\ga_i^\l)$ which is $\leq d+p$ whenever $\cz(\ga_i^\l) \geq -p$, $\cz(\ga_i^{k_i-\l}) \geq -d-p$ by \eqref{eq:d1} and the fact that $\mi(\ga_i)>0 \implies \cz(\ga_i^j)>-n$ for all $j$, and $\cz(\ga_i^{k_i-\l})>d+p$ whenever $\cz(\ga_i^\l) < -p$. In a similar vein, (B2--) contributes to \eqref{eq:c_m} and (B1--) does not: by \eqref{eq:IRT1}, $\cz(\ga_i^{k_i-\l}) = -d - \cz(\ga_i^\l)$ which is $\geq -d-p$ whenever $\cz(\ga_i^\l) \leq -p$, $\cz(\ga_i^{k_i-\l}) \leq d$ by \eqref{eq:d1} and the fact that $\mi(\ga_i)<0 \implies \cz(\ga_i^j)<n$ for all $j$, and $\cz(\ga_i^{k_i-\l})<-d-p$ whenever $\cz(\ga_i^\l) > -p \implies \cz(\ga_i^\l) > p$, since we are excluding good orbits $\ga_i^{k_i\pm\l}$ such that $\cz(\ga_i^\l) \in [-p,p]$.

Now, since we are excluding good orbits $\ga_i^{k_i\pm\l}$ such that $\cz(\ga_i^\l) \notin [-p,p]$, we have that for all good iterates $\ga_i^{k_i+\l}$ in (C1+) $\cz(\ga_i^\l) > p$, and for all good iterates $\ga_i^{k_i+\l}$ in (C2--) $\cz(\ga_i^\l) < -p$. Hence, if $\ga_i^{k_i+\l}$ belongs to (C1+), by \eqref{eq:IRT2}, $\cz(\ga_i^{k_i+\l}) = d + \cz(\ga_i^\l) > d+p$ whenever $\ga_i^{k_i+\l}$ is good. Thus, the orbits from (C1+) do not contribute to \eqref{eq:c_m}. The orbits of (C2--) do not contribute as well: by \eqref{eq:IRT2}, $\cz(\ga_i^{k_i+\l}) = -d + \cz(\ga_i^\l) < -d-p$ whenever $\ga_i^\l$ is good. On the other hand, the good orbits of (C2+) and (C1--) contribute: in the first case, $\cz(\ga_i^{k_i+\l}) = d + \cz(\ga_i^\l) < d+p$ because $\cz(\ga_i^\l) < -p$ and $\cz(\ga_i^{k_i+\l}) \geq -d-p$ by \eqref{eq:d1} and the fact that $\mi(\ga_i)>0 \implies \cz(\ga_i^j)>-n$ for all $j$; in the second case, $\cz(\ga_i^{k_i+\l}) = -d + \cz(\ga_i^\l) > -d-p$ because $\cz(\ga_i^\l) > -p$ and $\cz(\ga_i^{k_i+\l}) \leq d+p$ again by \eqref{eq:d1} and the fact that $\mi(\ga_i)<0 \implies \cz(\ga_i^j)<n$ for all $j$. (Above $\cz(\ga_i^{k_i+\l})$ and $\cz(\ga_i^\l)$ have the same parity since the integers $k_i$ are even.)

To summarize, all good orbits from classes (A), (E), (B1+), (B2--), (C2+) and (C1--) contribute to \eqref{eq:c_m}, and orbits from classes (D), (B1--), (B2+), (C1+) and (C2--)  make no contribution to \eqref{eq:c_m}.  Define
\begin{equation}
\label{eq:c^e}
c^e_\pm = \sum_{i=1}^r \#\{1 \leq \l \leq \lo \mid \s_i\cz(\ga_i^\l) < 0,\ \ga_i^{k_i\pm \l}\text{ is good, }\cz(\ga_i^\l) \notin [-p,p]\text{ and }\cz(\ga_i^\l)\text{ is even}\},
\end{equation}
\begin{equation}
\label{eq:c^o}
c^o_\pm = \sum_{i=1}^r \#\{1 \leq \l \leq \lo \mid \s_i\cz(\ga_i^\l) < 0,\ \ga_i^{k_i\pm \l}\text{ is good, }\cz(\ga_i^\l) \notin [-p,p]\text{ and }\cz(\ga_i^\l)\text{ is odd}\},
\end{equation}
and
\[
c_{j,i} = \#\{k \in \N;\,\ga_i^k\text{ is good and }\cz(\ga_i^k)=j\}.
\]
Notice that
\begin{equation}
\label{eq:c_ji}
c_{j,i} = \#\{1\leq\l\leq\lo;\,\ga_i^\l\text{ is good and }\cz(\ga_i^\l)=j\},
\end{equation}
whenever $j \in [-p,p]$. Indeed, from the definition of $\lo$, we have, for every $k\in \N$, that $\cz(\ga_i^{\lo+k}) \geq 2n+p+1 > p$ if $\s_i>0$ and $\cz(\ga_i^{\lo+k}) \leq -2n-p-1 < -p$ if $\s_i<0$.

Consider now $ \sum_{m=-\infty}^\infty (-1)^m c_m $ to which all good orbits from classes (A)--(E) and the collection $\{\gamma_i^{k_i}\}$ contribute. In particular, the contributions of classes (B1--) and (B2+) and classes (C1--) and (C2+) iterates are respectively $c_-^e-c_-^o$ and $c_+^e-c_+^o$. Viewing \eqref{eq:c_m} as
$$ 
\sum_{m=-d-p}^{d+p} (-1)^m c_m=\sum_{m=-\infty}^\infty (-1)^m c_m -
\sum_{|m|>d+p} (-1)^m c_m,
$$ 
with the above discussion in mind, we have
\begin{align}
\label{eq:contributions1}
\sum_{m=-d-p}^{d+p} (-1)^m c_m 
= & \sum_{i=1}^r\bigg(\underbrace{\sum_{\l=1}^{k_i}
  \chi(\ga_i^\l)  - \sum_{j=-p}^p (-1)^jc_{j,i}}_{(A)+(B)+\chi(\ga_i^{k_i})}\bigg)
+ \underbrace{c_+^e-c_+^o}_{(C1-)+(C2+)}
- \underbrace{(c_-^e-c_-^o)}_{(B1-)+(B2+)}  \nonumber \\
& - \sum_{\substack{i=1\\ |\cz(\ga_i^{k_i})|>d+p}}^r \chi(\ga_i^{k_i})
+ \underbrace{2\sum_{j=-p}^p (-1)^jc_{j}}_{(E)}.
\end{align}
Here, as indicated by the underbraces, the first term on the right-hand side comes from the iterates in classes (A) and (B) and the iterate $\ga_i^{k_i}$ (note that we are subtracting the term $\sum_{j=-p}^p (-1)^jc_{j,i}$ because we exclude in (B) the orbits $\ga_i^{k_i-\l}$ with $1\leq \l \leq \lo$ and $\cz(\ga_i^\l) \in [-p,p]$), the second term comes from classes (C1--) and (C2+) iterates, and the third term cancels out the contribution of classes (B1--) and (B2+) orbits to the first term. The second to last term eliminates the contribution to the first term of the orbits $\ga_i^{k_i}$ with index with modulus greater than $d+p$ and the last term is the contribution from (E).

Note that, since $\cz(\ga^{k_i-\l})$ and $\cz(\ga^{k_i+\l})$ have the same parity, $c_-^e = c_+^e$ and $c_-^o = c_+^o$.  Thus the second and third underbraced terms on the right-hand side of equation \eqref{eq:contributions1} cancel each other out and we arrive at
\begin{equation}
\label{eq:contributions2}
\sum_{m=-d-p}^{d+p} (-1)^m c_m 
= \sum_{i=1}^r\sum_{\l=1}^{k_i} \chi(\ga_i^\l) 
- \sum_{\substack{i=1\\ |\cz(\ga_i^{k_i})|>d+p}}^r \chi(\ga_i^{k_i}) + \sum_{j=-p}^p (-1)^jc_{j}.
\end{equation}
Define
\begin{equation}
\label{eq:r^{e,p}}
r^{e,p}_\pm = \#\{1 \leq i \leq r \mid 
\pm(\sigma_i\cz(\ga_i^{k_i}) - d) > p,\ 
\ga_i^{k_i}\text{ is good and }\cz(\ga_i^{k_i})\text{ is even}\}
\end{equation}
and
\begin{equation}
\label{eq:r^o}
r^{o,p}_\pm = \#\{1 \leq i \leq r \mid 
\pm(\sigma_i\cz(\ga_i^{k_i}) - d) > p,\ 
\ga_i^{k_i}\text{ is good and }\cz(\ga_i^{k_i})\text{ is odd}\}.
\end{equation}
Essentially, the terms $r^{e,p}_\pm$ and $r^{o,p}_\pm$ count the good iterates $\ga_i^{k_i}$ whose indices lie outside the interval $[\sigma_i d-p,\sigma_i d+p]$, distinguished according to parity of $\cz(\ga_i^{k_i})$ and the sign of $\cz(\ga_i^{k_i}) - \sigma_i d$.

Notice that the second term in \eqref{eq:contributions2} is given by $r^{e,p}_+ - r^{o,p}_+$ because, by \eqref{eq:d1} and \eqref{eq:IRT3}, $\cz(\ga_i^{k_i})$ and $\mi(\ga_i^{k_i})$ have the same sign $\s_i$ (it also follows from \eqref{eq:k_i} and the definition of $\lo$). Then, by Lemma \ref{lemma:resonance},
\begin{align}
\label{eq:morse_ineq}
d\chi_+(M) - r^{e,p}_+ + r^{o,p}_+ & = \sum_{m=-d-p}^{d+p} (-1)^m c_m - \sum_{j=-p}^p (-1)^jc_{j}.
\end{align}

Now, let us break down the argument according to the parity of $p$.

\vskip .3cm
\noindent
{\bf Case 1. $p$ is even}
\vskip .2cm

In this case, since both $d$ and $p$ are even, we conclude from \eqref{eq:morse_ineq} and the Morse inequalities that
\begin{align}
\label{eq:lower estimate r^{o,p}_+}
r^{o,p}_+ & \geq -d\chi_+(M) + \sum_{m=-d-p}^{d+p} (-1)^m b_m - \sum_{j=-p}^p (-1)^jc_{j} + r^{e,p}_+ \nonumber \\
& = -d\chi_+(M) + \sum_{m=-\infty}^{d+p} (-1)^m b_m - \sum_{j=-p}^p (-1)^jc_{j} + r^{e,p}_+ \nonumber \\
& = -q\chi_+(M) + \sum_{m=-\infty}^{q+p} (-1)^m b_m - \sum_{j=-p}^p (-1)^jc_{j} + r^{e,p}_+,
\end{align}
where the second equation uses \eqref{eq:d2} and the last equality is a consequence of Proposition \ref{prop:periodicity} and the facts that $q$ and $d$ are both multiples of $\D$ and $d\geq q\geq P$ by \eqref{eq:d1}.

\vskip .3cm
\noindent
{\bf Case 2. $p$ is odd}
\vskip .2cm

Similarly to the previous case, since $d$ is even and $p$ is odd, from \eqref{eq:morse_ineq} and the Morse inequalities we obtain
\begin{align}
\label{eq:lower estimate r^{e,p}_+}
r^{e,p}_+ & \geq d\chi_+(M) - \sum_{m=-d-p}^{d+p} (-1)^m b_m + \sum_{j=-p}^p (-1)^jc_{j} + r^{o,p}_+ \nonumber \\
& = d\chi_+(M) - \sum_{m=-\infty}^{d+p} (-1)^m b_m + \sum_{j=-p}^p (-1)^jc_{j} + r^{o,p}_+ \nonumber \\
& = q\chi_+(M) - \sum_{m=-\infty}^{q+p} (-1)^m b_m + \sum_{j=-p}^p (-1)^jc_{j} + r^{o,p}_+,
\end{align}
where, as before, the second equation follows from \eqref{eq:d2} and the last equality is a consequence of Proposition \ref{prop:periodicity}.

\begin{remark}
\label{rmk:lacunary1}
Assume that $\alpha$ is lacunary as in Theorem \ref{thm:main2}. Then, since the Morse inequalities are equalities, in place of \eqref{eq:lower estimate r^{o,p}_+} and \eqref{eq:lower estimate r^{e,p}_+} we have
\[
r^{e,p}_+ = 0\quad\text{and}\quad r^{o,p}_+ = -q\chi_+(M) + \sum_{m=-\infty}^{q+p} (-1)^m b_m - \sum_{j=-p}^p (-1)^jb_{j}
\]
if $\alpha$ is odd and
\[
r^{o,p}_+ = 0\quad\text{and}\quad r^{e,p}_+ = q\chi_+(M) - \sum_{m=-\infty}^{q+p} (-1)^m b_m + \sum_{j=-p}^p (-1)^jb_{j}
\]
if $\alpha$ is even.
\end{remark}

\vskip .3cm
\noindent
{\bf Step 2. Existence of other} $(-1)^{p+1}q\chi_+(M) + (-1)^p\sum_{m=-\infty}^{q+p} b_m + (-1)^{p+1} \sum_{j=-p}^p (-1)^jc_{j} + \#\{1\leq i\leq r;\,\cz(\ga_i^{k_i})=p\,(\bmod\,2),\ |\cz(\ga_i^{k_i})|<d-p\ \text{and}\ \ga_i^{k_i}\ \text{is good}\}$ {\bf simple closed orbits}
\vskip .2cm

Now, we claim that if $p$ is even
\begin{equation}
\label{eq:lower estimate r^{o,p}_-}
r^{o,p}_- \geq -q\chi_+(M) + \sum_{m=-\infty}^{q+p} (-1)^m b_m - \sum_{j=-p}^p (-1)^jc_{j} + r^{e,p}_-
\end{equation}
and if $p$ is odd,
\begin{equation}
\label{eq:lower estimate r^{e,p}_-}
r^{e,p}_- \geq q\chi_+(M) - \sum_{m=-\infty}^{q+p} (-1)^m b_m + \sum_{j=-p}^p (-1)^jc_{j} + r^{o,p}_-.
\end{equation}
In order to prove this, observe that applying Theorem \ref{thm:IRT} to $N$, $\eta$ and $\lo$ as above, we obtain positive integers $(d',k'_1,\dots,k'_r)=(d^-_1,k^-_{11},\dots,k^-_{r1})$ such that
\begin{equation*}
\label{eq:IRT1'}
\cz(\ga_i^{k'_i-\l}) = \s_id' - \cz(\ga_i^\l),
\end{equation*}
\begin{equation*}
\label{eq:IRT2'}
\cz(\ga_i^{k'_i+\l}) = \s_id' + \cz(\ga_i^\l),
\end{equation*}
and
\begin{equation}
\label{eq:IRT3'}
\s_i\cz(\ga_i^{k'_i}) - d' = -(\s_i\cz(\ga_i^{k_i}) - d),
\end{equation}
for every $1\leq i \leq r$ and $1 \leq \l \leq \lo$. Arguing as before, we arrive at the equation
\begin{equation}
\label{eq:contributions3}
\sum_{m=-d'+p}^{d'-p} (-1)^m c_m 
= \sum_{i=1}^r\sum_{\l=1}^{k'_i} \chi(\ga_i^\l) - r'^{e,p}_+ + r'^{o,p}_+ + \sum_{j=-p}^p (-1)^jc_{j}\,,
\end{equation}
where, similarly to \eqref{eq:r^{e,p}} and \eqref{eq:r^o},
\begin{equation*}
r'^{e,p}_\pm = \#\{1 \leq i \leq r \mid 
\pm(\s_i\cz(\ga_i^{k'_i}) - d') > p,\ 
\ga_i^{k'_i}\text{ is good and }\cz(\ga_i^{k'_i})\text{ is even}\}
\end{equation*}
and
\begin{equation*}
\label{eq:r'^{o,p}}
r'^{o,p}_\pm = \#\{1 \leq i \leq r \mid 
\pm(\s_i\cz(\ga_i^{k'_i}) - d') > p,\ 
\ga_i^{k'_i}\text{ is good and }\cz(\ga_i^{k'_i})\text{ is odd}\}.
\end{equation*}
Notice that, due to \eqref{eq:IRT3'}, $r^{e,p}_\pm = r'^{e,p}_\mp$ and $r^{o,p}_\pm = r'^{o,p}_\mp$.  Therefore, equation \eqref{eq:contributions3}, together with Lemma \ref{lemma:resonance}, give rise to the relation
\begin{align}
\label{eq:morse_ineq2}
-d'\chi_+(M) - r^{e,p}_- + r^{o,p}_- 
& = \sum_{m=-d'}^{d'} (-1)^m c_m - \sum_{j=-p}^p (-1)^jc_{j}.
\end{align}
Therefore, \eqref{eq:lower estimate r^{o,p}_-} and \eqref{eq:lower estimate r^{e,p}_-} hold just as before.

\begin{remark}
\label{rmk:lacunary2}
As in Remark \ref{rmk:lacunary1}, if $\alpha$ is lacunary, in place of \eqref{eq:morse_ineq2} we have the equality
\[
-d'\chi_+(M) - r^{e,p}_- + r^{o,p}_- = \sum_{m=-d'}^{d'} (-1)^m b_m - \sum_{j=-p}^p (-1)^jb_j
\]
and therefore, if  $\alpha$ is odd,
\[
r^{e,p}_- = 0\quad\text{and}\quad r^{o,p}_- = -q\chi_+(M) + \sum_{m=-\infty}^{q} (-1)^m b_m - \sum_{j=-p}^p (-1)^jb_j,
\]
and, if $\alpha$ is even,
\[
r^{o,p}_- = 0\quad\text{and}\quad r^{e,p}_- = q\chi_+(M) - \sum_{m=-\infty}^{q} (-1)^m b_m + \sum_{j=-p}^p (-1)^jb_j.
\]
\end{remark}

\vskip .3cm

Now, we split the outcome of Steps 1 and 2 according to the parity of $p$:

\vskip .3cm
\noindent
{\bf Case 1. $p$ is even}
\vskip .2cm

Since $r^{o,p}_+$ and $r^{o,p}_-$ count two disjoint sets of orbits, in view of \eqref{eq:lower estimate r^{o,p}_+} and \eqref{eq:lower estimate r^{o,p}_-}, we must have at least $\br_p+r^{e,p}$ distinct simple closed orbits, say, $\ga_1,\dots,\ga_{\br_p},\ga_{\br_p+1},\dots,\ga_{\br+r^{e,p}}$, where
\[
\br_p := \max\{0,2(-q\chi_+(M) + \sum_{m=-\infty}^{q+p} (-1)^m b_m - \sum_{j=-p}^p (-1)^jc_{j})\}
\]
and
\[
r^{e,p} := r^{e,p}_- + r^{e,p}_+ = \#\{1\leq i\leq r;\ |\cz(\ga_i^{k_i})| \notin [d-p,d+p],\ \cz(\ga_i^{k_i})\text{ is even and }\ga_i^{k_i}\text{ is good}\}.
\]
Moreover, from the definitions of $r^{o,p}_+$ and $r^{o,p}_-$, we have that $|\cz(\ga_i^{k_i})| \notin [d-p,d+p]$ for every $1\leq i \leq \br_p+r^{e,p}$. (Note that, as already mentioned, $\cz(\ga_i^{k_i})$ and $\mi(\ga_i)$ have the same sign.)

\vskip .3cm
\noindent
{\bf Case 2. $p$ is odd}
\vskip .2cm

Similarly as in the previous case, since $r^{e,p}_+$ and $r^{e,p}_-$ count two disjoint sets of orbits, in view of \eqref{eq:lower estimate r^{e,p}_+} and \eqref{eq:lower estimate r^{e,p}_-}, we must have at least $\br_p+r^{o,p}$ distinct simple closed orbits, say, $\ga_1,\dots,\ga_{\br_p},\ga_{\br_p+1},\dots,\ga_{\br+r^{o,p}}$, where
\[
\br_p := \max\{0,2(q\chi_+(M) - \sum_{m=-\infty}^{q+p} (-1)^m b_m + \sum_{j=-p}^p (-1)^jc_{j})\}
\]
and
\[
r^{o,p} := r^{o,p}_- + r^{o,p}_+ = \#\{1\leq i\leq r;\ |\cz(\ga_i^{k_i})| \notin [d-p,d+p],\ \cz(\ga_i^{k_i})\text{ is odd and }\ga_i^{k_i}\text{ is good}\}.
\]
Moreover, from the definitions of $r^{e,p}_+$ and $r^{e,p}_-$, we have that $|\cz(\ga_i^{k_i})| \notin [d-p,d+p]$ for every $1\leq i \leq \br_p+r^{o,p}$.

Note that, in both cases, the set $\ga_1,\dots,\ga_{\br},\ga_{\br+1},\dots,\ga_{\br+r^{e,o}}$ might be empty. (Indeed, it is if $p>n$ and $\eta$ is chosen sufficiently small in Theorem \ref{thm:IRT}.)

\begin{remark}
\label{rmk:good1}
Observe that both $r^{o,p}_\pm$ and $r^{e,p}_\pm$ count simple periodic orbits $\ga_i$ such that $\ga_i^{k_i}$ is good.
\end{remark}

\vskip .3cm
\noindent
{\bf Step 3. Existence of more $\sum_{j=-p}^p (c_{d+j}+c_{-d+j}-c_j-c_{-j})$ simple closed orbits and proof of the desired lower bounds}
\vskip .2cm

From the definitions of $r^{o,p}_\pm$ and $r^{e,p}_\pm$, we have that the (good) iterates $\ga_i^{k_i}$ of the orbits $\ga_i$ obtained in Steps 1 and 2 have index outside the interval $[\s_id-p,\s_id+p]$. By our conditions on $d$, we have that $d-p>n$. Thus, if $\cz(\ga_i^{k_i}) \in [-d-p,-d+p] \implies \cz(\ga_i^{k_i}) < -n \implies \mi(\ga_i^{k_i}) < 0 \iff \mi(\ga_i) < 0$. Similarly, if $\cz(\ga_i^{k_i}) \in [d-p,d+p] \implies \cz(\ga_i^{k_i}) > n \implies \mi(\ga_i^{k_i}) > 0 \iff \mi(\ga_i) > 0$. Therefore, the orbits $\ga_i$ obtained in Steps 1 and 2 satisfy $\cz(\ga_i^{k_i}) \notin [-d-p,-d+p] \cup [d-p,d+p]$. Hence, from \eqref{eq:IRT1} and \eqref{eq:IRT2} we conclude that we have at least more
\[
\sum_{j=-p}^p (c_{d+j}+c_{-d+j}-c_j-c_{-j})
\]
new periodic orbits. As a matter of fact, $c_{d+j}+c_{-d+j}-c_j-c_{-j}$, with $j \in [-p,p]$, counts the number of periodic orbits $\ga_i$ such that $\ga_i^{k_i}$ is good and $\cz(\ga_i^{k_i}) = \pm d+j \in [-d-p,-d+p] \cup [d-p,d+p]$ which therefore must be different from the orbits $\ga_1,\dots,\ga_{\br_p},\ga_{\br_p+1}.\dots,\ga_{\br_p+r^{e,p}}$ when $p$ is even and  different from the orbits $\ga_1,\dots,\ga_{\br_p},\ga_{\br_p+1}.\dots,\ga_{\br_p+r^{o,p}}$ when $p$ is odd.

\begin{remark}
\label{rmk:good2}
As in Remark \ref{rmk:good1}, we have that $\sum_{j=-p}^p (c_{d+j}+c_{-d+j}-c_j-c_{-j})$ counts simple periodic orbits $\ga_i$ such that $\ga_i^{k_i}$ is good.
\end{remark}

Let us break down the argument again according to the parity of $p$:

\vskip .3cm
\noindent
{\bf Case 1. $p$ is even}
\vskip .2cm

From the previous discussions,

\begin{align*}
\#\P_\alpha & \geq  2(-q\chi_+(M) + \sum_{m=-\infty}^{q+p} (-1)^m b_m - \sum_{j=-p}^p (-1)^jc_{j}) + \sum_{j=-p}^p (c_{d+j}+c_{-d+j}-c_j-c_{-j}) + r^{e,p}.
\end{align*}

Given $m\in [-p,p]$, set
\begin{align*}
r^e_{m} = & \#\{1\leq i\leq r;\ |\cz(\ga_i^{k_i})| \in [d-m,d+m],\ \cz(\ga_i^{k_i})\text{ is even and }\ga_i^{k_i}\text{ is good}\},
\end{align*}
and note that
\[
r^e_{m} = \sum_{j=-m,\,j\,\text{even}}^{m} (c_{d+j}+c_{-d+j}-c_j-c_{-j}).
\]

When $p=0$ we obtain, since, by hypothesis, $c_0=0$,
\begin{align*}
\#\P_\alpha & \geq  2(-q\chi_+(M) + \sum_{m=-\infty}^{q} (-1)^m b_m) + c_{d} + c_{-d} + r^{e,p} \\
& = 2(-q\chi_+(M) + \sum_{m=-\infty}^{q} (-1)^m b_m) + r^{e,0} + r^e_0.
\end{align*}
Notice here that, since $c_0=0$,
\begin{equation}
\label{eq:even orbits1}
r^e_0 = c_d + c_{-d} = \#\{1\leq i\leq r;\ \cz(\ga_i^{k_i}) \in \{-d,d\},\ \text{and}\ \ga_i^{k_i}\text{ is good}\}.
\end{equation}

When $p>0$ we argue as follows. First, note that
\begin{align*}
\#\P_\alpha & \geq  2(-q\chi_+(M) + \sum_{m=-\infty}^{q+p} (-1)^m b_m  - \sum_{j=-p}^p (-1)^jc_{j}) + \sum_{j=-p}^p (c_{d+j}+c_{-d+j}-c_j-c_{-j}) + r^{e,p} \\
& = 2(-q\chi_+(M) + \sum_{m=-\infty}^{q} (-1)^m b_m) + r^{e,p} \\
& + \underbrace{\sum_{j=-p}^p (c_{d+j}+c_{-d+j}-c_j-c_{-j}) + 2\sum_{j=1}^p (-1)^jb_{d+j} - 2\sum_{j=-p}^p (-1)^jc_{j}}_{(*)},
\end{align*}
where in the equality we use the periodicity of $\HC_*(M)$.

Since, by \eqref{eq:IRT1} and \eqref{eq:IRT2}, $(c_{d+j}+c_{-d+j}-c_j-c_{-j}) \geq 0$ for every $j \in [-p,p]$ and, by assumption, $c_p=c_{-p}=0$ we arrive at
\begin{align*}
(*) & \geq \sum_{j=-p}^p (-1)^{j+1}(c_{d+j}+c_{-d+j}-c_j-c_{-j}) + 2(c_{-d-p}+c_{-d+p}+c_{d-p}+c_{d+p}) + 2r^e_{p-1}\\
& + 2\sum_{j=1}^p (-1)^jb_{d+j} - 2\sum_{j=-p}^p (-1)^jc_{j} \\
& = -\sum_{j=-p}^p (-1)^{j}(c_{d+j}+c_{-d+j}) + \sum_{j=-p}^p (-1)^{j}b_{d+j} - b_d + 2\sum_{j=-p}^p (-1)^jc_{j} - 2\sum_{j=-p}^p (-1)^jc_{j} \\
& + 2(c_{d-p}+c_{d+p}+c_{-d-p}+c_{-d+p}) + 2r^e_{p-1} \\
& = \underbrace{\sum_{j=-p+1}^{p-1} (-1)^{j}b_{d+j} - \sum_{j=-p+1}^{p-1} (-1)^{j}c_{d+j}}_{\geq 0\text{ by Morse inequalities and the fact that }p\text{ is even}} + b_{d-p} + b_{d+p} - c_{d-p} - c_{d+p} - c_{-d-p} - c_{-d+p} \\
&+ 2(c_{d-p}+c_{d+p}+c_{-d-p}+c_{-d+p}) - b_d - \underbrace{\sum_{j=-p+1}^{p-1} (-1)^{j}c_{-d+j}}_{\leq 0} + 2r^e_{p-1}\\
& \geq b_{d-p} + b_{d+p} - b_d + c_{d-p} + c_{d+p} + c_{-d-p} + c_{-d+p} + 2r^e_{p-1} \\
& = 2b_{d+p} - b_d + c_{d-p} + c_{d+p} + c_{-d-p} + c_{-d+p} + 2r^e_{p-1} \\
& = 2b_{q+p} - b_q + c_{d-p} + c_{d+p} + c_{-d-p} + c_{-d+p} + 2r^e_{p-1}, 
\end{align*}
where the first inequality uses the facts that $p$ is even and $c_p=c_{-p}=0$, the first equality holds because
\[
2\sum_{j=1}^p (-1)^jb_{d+j} = \sum_{j=-p}^p (-1)^{j}b_{d+j} - b_d
\]
by our assumption that $\D=2$ or using Propositions \ref{prop:PD preq} or \ref{prop:PD finite} (note that Proposition \ref{prop:PD finite} is proved in Section \ref{sec:proofmain2}) when $p>1$ and the fact that $d-p\geq P$ (by \eqref{eq:d1}) which is also used in the second to last equation, the last inequality holds because, since $-d+p<\bmin$ and $p$ is even,
\[
\sum_{j=-p+1}^{p-1} (-1)^{j}c_{-d+j} \leq \sum_{j=-p+1}^{p-1} (-1)^{j}b_{-d+j} = 0
\]
and the last equality uses the periodicity of $\HC_*(M)$.

Similarly as in \eqref{eq:even orbits1}, since $c_p=c_{-p}=0$,
\begin{equation*}
c_{d-p} + c_{d+p} + c_{-d-p} + c_{-d+p} = \#\{1\leq i\leq r;\ \cz(\ga_i^{k_i}) \in \{-d-p,-d+p,d-p,d+p\},\ \text{and}\ \ga_i^{k_i}\text{ is good}\}.
\end{equation*}
Consequently, from the definitions of $r^e_{p-1}$ and $r^e_{p}$,
\[
c_{d-p} + c_{d+p} + c_{-d-p} + c_{-d+p} + 2r^e_{p-1} \geq r^e_{p}.
\]

Hence, summing up the results, we have, for any even $p$,
\begin{equation}
\label{eq:bound even}
\#\P_\alpha \geq 
\begin{cases}
2(-q\chi_+(M) + \sum_{m=-\infty}^{q} (-1)^m b_m) + r^{e,p} + r^e_{p} \text{ if } p=0 \\
2(-q\chi_+(M) + \sum_{m=-\infty}^{q} (-1)^m b_m) + 2b_{q+p} - b_q + r^{e,p} + r^e_{p} \text{ if } p\ \text{is even and}\,\geq 2.
\end{cases}
\end{equation}
Since $r^{e,p} + r^e_{p} \geq 0$ it proves the desired lower bound.

Let us prove that if the equality in the theorem holds then $\alpha$ is lacunary with odd parity. We will prove it when $p\geq 2$ since the argument for $p=0$ is analogous. From the discussion above, we actually have that
\[
\#\P^{\text{good}}_\alpha \geq 2(-q\chi_+(M) + \sum_{m=-\infty}^{q} (-1)^m b_m) + 2b_{q+p} - b_q + r^{e,p} + r^e_{p}
\]
where $\#\P^{\text{good}}_\alpha$ is the number of simple closed orbits $\ga_i$ of $\alpha$ such that
\[
\ga_i^{2}\ \text{is good}\iff \ga_i^{k_i}\ \text{is good}.
\]
(Recall that $k_i$ is even.) See Remarks \ref{rmk:good1} and \ref{rmk:good2}.

Suppose now that
\begin{equation}
\label{eq:equality bound1}
\#\P_\alpha = 2(-q\chi_+(M) + \sum_{m=-\infty}^{q} (-1)^m b_m) + 2b_{q+p} - b_q.
\end{equation}
So that, by \eqref{eq:bound even} and the definitions of $r^{e,p}$ and $r^e_{p}$,
\begin{align*}
r^e & := r^{e,p} + r^e_{p} \\
& = \#\{1\leq i\leq r;\ \cz(\ga_i^{k_i})\text{ is even and }\ga_i^{k_i}\text{ is good}\} \\
& = \#\{1\leq i\leq r;\ \cz(\ga_i^j)\text{ is even for every }j\} \\
& = 0,
\end{align*}
where the third equation holds because the parities of the indices of the iterates $\ga_i^j$ depend only on the parity of $j$, that is, $\cz(\ga_i^j)=\cz(\ga_i^{j'})\,(\bmod\,2)$ whenever $j=j'\,(\bmod\,2)$.

We claim that there is no periodic orbit $\ga_i$ such that $\cz(\ga_i^j)$ is even for some $j$. As a matter of fact, suppose, by contradiction, that there exists at least one such an orbit. If $\ga_i^2$ is good $\iff$ $\ga_i^{k_i}$ is good, then $\cz(\ga_i^{k_i})$ is even, contradicting the fact that $r^e=0$. Thus, we necessarily have that $\ga_i^{k_i}$ is bad. Then,
\[
\#\P_\alpha \geq \#\P^{\text{good}}_\alpha + 1 \geq 2(-q\chi_+(M) + \sum_{m=-\infty}^{q} (-1)^m b_m) + 2b_{q+p} - b_q + 1
\]
contradicting \eqref{eq:equality bound1}. It proves the desired result.

\begin{remark}
\label{rmk:non-hyperbolic p even}
From the discussion above, the periodic orbits $\ga_1,\dots,\ga_{\br_p},\ga_{\br_p+1},\dots,\ga_{\br_p+r^{e,p}}$ satisfy $\cz(\ga_i^{k_i}) \neq d$ if $\s_i=1$ and $\cz(\ga_i^{k_i}) \neq -d$ if $\s_i=-1$. Therefore, by item \ref{cond:i} of Theorem \ref{thm:IRT}, they cannot be hyperbolic. By the same reason, we have at least more $\sum_{j=-p}^p (c_{d+j}+c_{-d+j}-c_j-c_{-j})-(c_d+c_{-d}-2c_0)$ non-hyperbolic orbits. Thus, when $p=0$, it gives us
\[
\#\P^{\text{non-hyp}}_\alpha \geq 2(-q\chi_+(M) + \sum_{m=-\infty}^{q} (-1)^m b_m),
\]
where $\#\P^{\text{non-hyp}}_\alpha$ stands for the number of simple non-hyperbolic closed orbits. When $p$ is even and positive, we have, arguing as before,
\begin{align*}
\#\P^{\text{non-hyp}}_\alpha & \geq  2(-q\chi_+(M) + \sum_{m=-\infty}^{q+p} (-1)^m b_m - \sum_{j=-p}^p (-1)^jc_{j}) \\
&+ \sum_{j=-p}^p (c_{d+j}+c_{-d+j}-c_j-c_{-j}) -(c_d+c_{-d}-2c_0)\\
& \geq 2(-q\chi_+(M) + \sum_{m=-\infty}^{q} (-1)^m b_m) + \\
& \underbrace{\sum_{j=-p}^p (c_{d+j}+c_{-d+j}-c_j-c_{-j}) + 2\sum_{j=1}^p (-1)^jb_{d+j} - 2\sum_{j=-p}^p (-1)^jc_{j}-(c_d+c_{-d}-2c_0)}_{(**)}.
\end{align*}
But, since $(c_{d+j}+c_{-d+j}-c_j-c_{-j}) \geq 0$ for every $j \in [-p,p]$, we also have that
\begin{align*}
(**) & \geq \sum_{j=-p}^p (-1)^{j+1}(c_{d+j}+c_{-d+j}-c_j-c_{-j}) + 2(c_{d-p}+c_{d+p}+c_{-d-p}+c_{-d+p}) \\
&+ 2\sum_{j=1}^p (-1)^jb_{d+j} - 2\sum_{j=-p}^p (-1)^jc_{j} \\
& \geq 2b_{q+p} - b_q, 
\end{align*}
just as before. (Note that the exponent at the first alternating sum is odd when $j=0$.) Consequently,
\[
\#\P^{\text{non-hyp}}_\alpha \geq 
\begin{cases}
2(-q\chi_+(M) + \sum_{j=-\infty}^q (-1)^jb_j)\ \text{if}\ p=0 \\
2(-q\chi_+(M) + \sum_{j=-\infty}^q (-1)^jb_j) + 2b_{q+p} - b_q\ \text{if}\ p\ \text{is even and}\ \geq 2. \\
\end{cases}
\]
Here we do not count periodic orbits $\ga_i$ such that $\cz(\ga_i^{k_i})$ is even because it is not needed for the proof of Theorem \ref{thm:non-hyp-general}.
\end{remark}

\vskip .3cm
\noindent
{\bf Case 2. $p$ is odd}
\vskip .2cm

This case is similar to the previous one. As before,

\begin{align*}
\#\P_\alpha & \geq  2(q\chi_+(M) - \sum_{m=-\infty}^{q+p} (-1)^m b_m + \sum_{j=-p}^p (-1)^jc_{j}) + \sum_{j=-p}^p (c_{d+j}+c_{-d+j}-c_j-c_{-j}) + r^{o,p}.
\end{align*}

Given $m\in [-p,p]$, set
\begin{align*}
r^o_{m} = & \#\{1\leq i\leq r;\ |\cz(\ga_i^{k_i})| \in [d-m,d+m],\ \cz(\ga_i^{k_i})\text{ is odd and }\ga_i^{k_i}\text{ is good}\},
\end{align*}
and note that
\[
r^o_{m} = \sum_{j=-m,\,j\,\text{odd}}^{m} (c_{d+j}+c_{-d+j}-c_j-c_{-j}).
\]

When $p=1$ we obtain, since, by hypothesis, $c_1=c_{-1}=0$,
\begin{align*}
\#\P_\alpha & \geq  2(q\chi_+(M) - \sum_{m=-\infty}^{q+1} (-1)^m b_m + c_0) + c_{d} - 2c_0 + c_{d-1} + c_{d+1} + c_{-d} + c_{-d-1} + c_{-d+1} + r^{o,1} \\
& = 2(q\chi_+(M) - \sum_{m=-\infty}^{q} (-1)^m b_m) + c_{d} + c_{d-1} + c_{d+1} + 2b_{q+1} + c_{-d} + c_{-d-1} + c_{-d+1} + r^{o,1} \\
& \geq 2(q\chi_+(M) - \sum_{m=-\infty}^{q} (-1)^m b_m) + b_{d} + 2b_{q+1} + r^{o,1} + r^o_1 \\
& = 2(q\chi_+(M) - \sum_{m=-\infty}^{q} (-1)^m b_m) + b_{q} + 2b_{q+1} + r^{o,1} + r^o_1,
\end{align*}
where in the last equality we used the periodicity of $\HC_*(M)$ and the fact that $q\geq P$. Note here that, since $c_{-1}=c_1=0$,
\begin{align*}
\label{eq:odd orbits1}
r^o_1 & = c_{d-1} + c_{d+1} + c_{-d-1} + c_{-d+1} \\
& = \#\{1\leq i\leq r;\ \cz(\ga_i^{k_i}) \in \{-d-1,-d+1,d-1,d+1\},\,\text{and}\ \ga_i^{k_i}\text{ is good}\}. \numberthis
\end{align*}

When $p>1$ we argue analogously as in the previous case. First,
\begin{align*}
\#\P_\alpha & \geq  2(q\chi_+(M) - \sum_{m=-\infty}^{q+p} (-1)^m b_m + \sum_{j=-p}^p (-1)^jc_{j}) + \sum_{j=-p}^p (c_{d+j}+c_{-d+j}-c_j-c_{-j}) + r^{o,p} \\
& = 2(q\chi_+(M) - \sum_{m=-\infty}^{q} (-1)^m b_m) + r^{o,p} \\
& +  \underbrace{\sum_{j=-p}^p (c_{d+j}+c_{-d+j}-c_j-c_{-j}) - 2\sum_{j=1}^p (-1)^jb_{d+j} + 2\sum_{j=-p}^p (-1)^jc_{j}}_{(*)}.
\end{align*}

Since, by \eqref{eq:IRT1} and \eqref{eq:IRT2}, $(c_{d+j}+c_{-d+j}-c_j-c_{-j}) \geq 0$ for every $j \in [-p,p]$ and, by assumption, $c_p=c_{-p}=0$ we have that
\begin{align*}
(*) & \geq \sum_{j=-p}^p (-1)^{j}(c_{d+j}+c_{-d+j}-c_j-c_{-j}) + 2(c_{d-p}+c_{d+p}+c_{-d-p}+c_{-d+p}) + 2r^o_{p-1} \\
& - 2\sum_{j=1}^p (-1)^jb_{d+j} + 2\sum_{j=-p}^p (-1)^jc_{j} \\
& = \sum_{j=-p}^p (-1)^{j}(c_{d+j}+c_{-d+j}) - \sum_{j=-p}^p (-1)^{j}b_{d+j} + b_d - 2\sum_{j=-p}^p (-1)^jc_{j} + 2\sum_{j=-p}^p (-1)^jc_{j} \\
& + 2(c_{d-p}+c_{d+p}+c_{-d-p}+c_{-d+p}) + 2r^o_{p-1} \\
& = \underbrace{\sum_{j=-p+1}^{p-1} (-1)^{j}c_{d+j} - \sum_{j=-p+1}^{p-1} (-1)^{j}b_{d+j}}_{\geq 0\text{ by Morse inequalities and the fact that }p\text{ is odd}} + b_{d-p} + b_{d+p} - c_{d-p} - c_{d+p} - c_{-d-p} - c_{-d+p} \\
& + 2(c_{d-p}+c_{d+p}+c_{-d-p}+c_{-d+p}) + b_d + \underbrace{\sum_{j=-p+1}^{p-1} (-1)^{j}c_{-d+j}}_{\geq 0} + 2r^o_{p-1} \\
& \geq b_{d-p} + b_{d+p} + b_d + c_{d-p} + c_{d+p} + c_{-d-p} + c_{-d+p} + 2r^o_{p-1} \\
& = 2b_{d+p} + b_d + c_{d-p} + c_{d+p} + c_{-d-p} + c_{-d+p} + 2r^o_{p-1} \\
& = 2b_{q+p} + b_q + c_{d-p} + c_{d+p} + c_{-d-p} + c_{-d+p} + 2r^o_{p-1},
\end{align*}
where the first inequality uses the facts that $p$ is odd and $c_p=c_{-p}=0$, the first equality holds because
\[
2\sum_{j=1}^p (-1)^jb_{d+j} = \sum_{j=-p}^p (-1)^{j}b_{d+j} - b_d
\]
by our assumption that $\D=2$ or using Propositions \ref{prop:PD preq} or \ref{prop:PD finite} when $p>1$ and the fact that $d-p\geq P$ (by \eqref{eq:d1}) which is also used in the second to last equation, the last inequality holds because, since $-d+p<\bmin$ and $p$ is odd,
\[
\sum_{j=-p+1}^{p-1} (-1)^{j}c_{-d+j} \geq \sum_{j=-p+1}^{p-1} (-1)^{j}b_{-d+j} = 0
\]
and the last equality uses the periodicity of $\HC_*(M)$. 

Similarly as in \eqref{eq:even orbits1}, since $c_p=c_{-p}=0$,
\begin{equation*}
c_{d-p} + c_{d+p} + c_{-d-p} + c_{-d+p} = \#\{1\leq i\leq r;\ \cz(\ga_i^{k_i}) \in \{-d-p,-d+p,d-p,d+p\},\ \text{and}\ \ga_i^{k_i}\text{ is odd}\}.
\end{equation*}
Consequently, from the definitions of $r^o_{p-1}$ and $r^o_{p}$,
\[
c_{d-p} + c_{d+p} + c_{-d-p} + c_{-d+p} + 2r^o_{p-1} \geq r^o_{p}.
\]

Therefore, for any odd $p$,
\[
\#\P_\alpha \geq 2(q\chi_+(M) - \sum_{m=-\infty}^{q} (-1)^m b_m) + 2b_{q+p} + b_q + r^{o,p} + r^o_{p}.
\]
Since $r^{o,p} + r^o_{p} \geq 0$ it proves the desired lower bound.

To prove that if the equality in the theorem holds then $\alpha$ is lacunary with even parity, we argue as in the previous case. From the discussion above, we have that
\begin{equation}
\label{eq:bound odd}
\#\P^{\text{good}}_\alpha \geq 2(q\chi_+(M) - \sum_{m=-\infty}^{q} (-1)^m b_m) + 2b_{q+p} + b_q + r^{o,p} + r^o_{p}
\end{equation}
where $\#\P^{\text{good}}_\alpha$ is the number of simple closed orbits $\ga_i$ of $\alpha$ such that
\[
\ga_i^{2}\ \text{is good}\iff \ga_i^{k_i}\ \text{is good}.
\]
See Remarks \ref{rmk:good1} and \ref{rmk:good2}.

Suppose now that
\begin{equation}
\label{eq:equality bound2}
\#\P_\alpha = 2(q\chi_+(M) - \sum_{m=-\infty}^{q} (-1)^m b_m) + 2b_{q+p} + b_q.
\end{equation}
So, by \eqref{eq:bound odd} and the definitions of $r^{o,p}$ and $r^o_{p}$,
\begin{align*}
r^o & := r^{o,p} + r^o_{p} \\
& = \#\{1\leq i\leq r;\ \cz(\ga_i^{k_i})\text{ is odd and }\ga_i^{k_i}\text{ is good}\} \\
& = \#\{1\leq i\leq r;\ \cz(\ga_i^j)\text{ is odd for every }j\} \\
& = 0,
\end{align*}
where, as before, the third equation holds because the parities of the indices of the iterates $\ga_i^j$ depend only on the parity of $j$.

We claim that there is no periodic orbit $\ga_i$ such that $\cz(\ga_i^j)$ is odd for some $j$. As a matter of fact, suppose, by contradiction, that there exists at least one such an orbit. If $\ga_i^2$ is good $\iff$ $\ga_i^{k_i}$ is good, then $\cz(\ga_i^{k_i})$ is odd, contradicting the fact that $r^o=0$. Thus, we necessarily have that $\ga_i^{k_i}$ is bad. Then,
\[
\#\P_\alpha \geq \#\P^{\text{good}}_\alpha + 1 \geq 2(q\chi_+(M) - \sum_{m=-\infty}^{q} (-1)^m b_m) + 2b_{q+p} + b_q + 1
\]
contradicting \eqref{eq:equality bound2}. It proves the desired result.

\begin{remark}
\label{rmk:non-hyperbolic p odd}
As in Remark \ref{rmk:non-hyperbolic p even}, the periodic orbits $\ga_1,\dots,\ga_{\br_p},\ga_{\br_p+1},\dots,\ga_{\br_p+r^{o,p}}$ satisfy $\cz(\ga_i^{k_i}) \neq d$ if $\s_i=1$ and $\cz(\ga_i^{k_i}) \neq -d$ if $\s_i=-1$. Therefore, by item \ref{cond:i} of Theorem \ref{thm:IRT}, they cannot be hyperbolic. By the same reason, we have at least more $\sum_{j=-p}^p (c_{d+j}+c_{-d+j}-c_j-c_{-j})-(c_d+c_{-d}-2c_0)$ non-hyperbolic orbits. Suppose that $p=1$. Then, assuming that $q>P$,
\begin{align*}
\#\P^{\text{non-hyp}}_\alpha & \geq  2(d\chi_+(M) - \sum_{m=-\infty}^{d+1} (-1)^m b_m + c_0) + c_{d-1} + c_{d+1} + c_{-d-1} + c_{-d+1} \\
& \geq 2(d\chi_+(M) - \sum_{m=-\infty}^{d} (-1)^m b_m) + b _{d-1} + 3b_{d+1} + 2b_0 \\
& = 2(q\chi_+(M) - \sum_{m=-\infty}^{q} (-1)^m b_m) + b _{q-1} + 3b_{q+1} + 2b_0,
\end{align*}
where in the last equality we used the periodicity of $\HC_*(M)$.
\end{remark}

\begin{remark}
\label{rmk:lacunary3}
Assume that $\alpha$ is lacunary with odd parity. Then, for all $i$, $\cz(\ga_i^j)\neq 0$ for every $j$ and, since $d$ is even, $\cz(\ga_i^{k_i})\neq d$. Therefore, from the previous discussion (applied to the case where $p=0$) and Remarks \ref{rmk:lacunary1} and \ref{rmk:lacunary2},
\[
r = \#\{1\leq i\leq r;\ \cz(\ga_i^{k_i})<d\text{ or }\cz(\ga_i^{k_i})>d\} = r^{o,0}_- + r^{o,0}_+ = 2(-q\chi_+(M) + \sum_{m=-\infty}^{q} (-1)^mb_m).
\]
Note here that every periodic orbit of $\alpha$ is good. Similarly, if the parity of $\alpha$ is even, there is no closed orbit with index $\pm 1$ and consequently, from the previous argument when $p=1$, we have that the simple orbits $\ga_i$ counted by $r^{e,1}_\pm$ are those for which $\cz(\ga_i^{k_i})\neq d$. As explained above (see Step 3), we have $c_d - 2c_0 = b_d - 2b_0 = b_q - 2b_0$ new simple periodic orbits $\ga_i$ such that $\cz(\ga_i^{k_i})=d$. Thus, we have, by Remarks \ref{rmk:lacunary1} and \ref{rmk:lacunary2},
\begin{align*}
r &= \#\{1\leq i\leq r;\ \cz(\ga_i^{k_i})<d\text{ or }\cz(\ga_i^{k_i})>d\} + \#\{1\leq i\leq r;\ \cz(\ga_i^{k_i})=d\} \\
& = r^{e,1}_- + r^{e,1}_+ + b_q - 2b_0 \\
& = 2(q\chi_+(M) - \sum_{m=-\infty}^{q+1} (-1)^m b_m + \sum_{j=-1}^1 (-1)^jb_{j}) + b_q - 2b_0 \\
& = 2(q\chi_+(M) - \sum_{m=-\infty}^{q} (-1)^m b_m + b_0) + b_q - 2b_0 \\
& = 2(q\chi_+(M) - \sum_{m=-\infty}^{q} (-1)^mb_m) + b_q,
\end{align*}
where in the fourth equality we use that $q$ is even. Note again that every periodic orbit of $\alpha$ is good.
\end{remark}

\section{Proof of Theorem \ref{thm:main2}}
\label{sec:proofmain2}

As explained in Section \ref{sec:ESH}, under the assumptions of the theorem, we have the positive equivariant symplectic homology $\HC_*(M)$. If $M$ does not admit a ``nice'' filling, in the sense of \ref{cond:F}, we assume that the contact form $\alpha$ is index-admissible.

By the discussion in the same section, $\HC_*(M)$ is the homology of a chain complex $\CC_*(\alpha)$ generated by the good closed orbits of $\alpha$ graded by the index. Since $\alpha$ is lacunary, every periodic orbit is good and the differential in $\CC_*(\alpha)$ vanishes. Therefore,
\begin{equation}
\label{eq:perorbs}
\HC_k(M) \cong \CC_k(\alpha) \cong \oplus_{\ga \in \PP(\alpha);\, \cz(\ga)=k} \Nov
\end{equation}
for every $k \in \Z$, where $\PP(\alpha)$ is the set of (not necessarily simple) closed orbits of $\alpha$. In other words, every closed orbit of $\alpha$ contributes to the positive equivariant symplectic homology.

Define
\[
\cmin:=\min\{m \in \Z;\ \CC_m(M)\neq 0\}.
\]
From \eqref{eq:perorbs} and our hypotheses,
\[
\cmin = \bmin > -\infty.
\]
From this fact and the assumption that $b_j < \infty$ for every $j \in \Z$ we conclude from  \eqref{eq:perorbs} that every closed orbit of $\alpha$ has positive mean index.

\vskip .3cm
\noindent
{\bf Claim 1. $\alpha$ has finitely many simple closed orbits}
\vskip .2cm

This part of the proof follows from the work of G\"urel \cite[Theorem 1.5]{Gu}. For the sake of completeness, we will reproduce her argument. Define
\[
b = \limsup_{k\to\infty} \sum_{i=0}^{2n} \dim \HC_{k+i}(M).
\]
By the periodicity of $\HC_*(M)$ and the assumption that the contact Betti numbers are finite, $b$ is a finite integer.

The key ingredient is the following combinatorial lemma proved in \cite{Gu} which uses the aforementioned fact that every closed orbit of $\alpha$ has positive mean index:

\begin{lemma} \cite[Lemma 3.2]{Gu}
\label{lemma:combinatorial}
Assume that $\alpha$ has a collection of $m$ geometrically distinct periodic orbits $\ga_1,\dotsc, \ga_m$ with positive mean index. Then for every sufficiently small $\ep>0$, there exist infinitely many disjoint intervals $I$ of length $2n+\ep$ such that for some positive integers $k_1\geq 1,\dots, k_m \geq 1$ (depending on the interval), the iterated orbits $\ga_1^{k_1},\dotsc, \ga_m^{k_m}$ all have indices in the interval $I$.
\end{lemma}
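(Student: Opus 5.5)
We have to prove Lemma \ref{lemma:combinatorial}: given $\ga_1,\dots,\ga_m$ with positive mean indices $\mi_i:=\mi(\ga_i)>0$, for every small $\ep>0$ there are infinitely many disjoint intervals $I$ of length $2n+\ep$ each containing indices $\cz(\ga_1^{k_1}),\dots,\cz(\ga_m^{k_m})$ for suitable $k_i\geq 1$.

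The plan rests on two ingredients. The first is the standard estimate $|\cz(\ga^k)-k\mi(\ga)|\leq n$ for every iterate of a non-degenerate orbit. This uses the closed-under-iteration trivialization of Section \ref{sec:ESH}, so that $\mi(\ga^k)=k\mi(\ga)$, together with the usual bound between the Conley–Zehnder index and the mean index of a path in $\TSp(2n)$. The second is simultaneous Diophantine approximation.

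With these, it suffices to find infinitely many reals $T\to\infty$ and integers $k_i\geq 1$ such that
\[
|k_i\mi_i - T|<\ep/2 \quad\text{for all } i.
\]
Indeed, then every $\cz(\ga_i^{k_i})$ lies in $[T-n-\ep/2,\,T+n+\ep/2]$, an interval of length $2n+\ep$. Choosing the successive $T$'s spaced by more than $2n+\ep$ makes the intervals disjoint.

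To produce such $T$, I would write $T=t$ and ask that $t/\mi_i$ be within $\ep/(2\mi_{\max})$ of an integer for each $i$. Equivalently, the point $(t/\mi_1,\dots,t/\mi_m)$ should lie close to $\Z^m$ in the torus $\R^m/\Z^m$.

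Dirichlet's simultaneous approximation theorem handles this. Apply it, or just the pigeonhole principle on the orbit $t\mapsto (t/\mi_i)_i \bmod \Z^m$ along the integer times $t\in \N\cdot c$ for a fixed $c>0$, to the vector $(c/\mi_1,\dots,c/\mi_m)$. This gives infinitely many positive integers $s$ with $\|s c/\mi_i\|<\delta$ for all $i$, where $\|\cdot\|$ is the distance to $\Z$ and $\delta=\ep/(2\max_i\mi_i)$. For each such $s$, put $T=sc$ and let $k_i$ be the nearest integer to $T/\mi_i$. Then $|k_i\mi_i-T|<\mi_i\delta\leq \ep/2$, and $k_i\geq 1$ once $T$ is large.

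Because the set of admissible $s$ is infinite, $T$ is unbounded. I then pass to a subsequence with gaps larger than $2n+\ep$, which gives disjoint intervals.

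The main point to check carefully is the index estimate $|\cz(\ga^k)-k\mi(\ga)|\leq n$ with the paper's normalization and trivialization. If only a weaker constant is available, the interval length changes. So I would verify it from the standard relation between the Conley–Zehnder index and the mean index via the rotation function, using non-degeneracy of all iterates. The Diophantine step itself is routine. The pigeonhole version avoids any rational-independence assumptions, since returning near $0$ in the torus infinitely often follows from recurrence of the translation by $(c/\mi_i)_i$.
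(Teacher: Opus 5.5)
Your argument is correct and is the standard proof of this lemma; the paper does not reprove it but imports it from \cite{Gu}. It combines the bound $|\cz(\ga^k)-k\mi(\ga)|< n$ for non-degenerate iterates (valid because the trivialization is closed under iterations, so $\mi(\ga^k)=k\mi(\ga)$) with simultaneous Diophantine approximation, which makes every $k_i\mi(\ga_i)$ lie within $\ep/2$ of a common $T\to\infty$; positivity of the mean indices enters only to guarantee $k_i\geq 1$.
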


Since every closed orbit of $\alpha$ has positive mean index, it follows from \eqref{eq:perorbs} and the previous lemma that $\alpha$ has at most $b$ simple closed orbits.

\begin{remark}
Note that the dimension of $M$ in \cite{Gu} is $2n-1$ and here it is $2n+1$.
\end{remark}

As mentioned in Section \ref{sec:proofmain}, once we have that $\alpha$ has finitely many closed orbits, Theorem \ref{thm:main2} follows from the proof of Theorem \ref{thm:main1}; see Remarks \ref{rmk:lacunary1}, \ref{rmk:lacunary2} and \ref{rmk:lacunary3}. However, for the reader's convenience, we will present an easier and shorter proof following the lines of \cite{AM5}. The proof here is actually easier than the proof in \cite{AM5} since we do not have to split it according to the parity of $\alpha$. Denote by $\delta$ the parity of $\alpha$.

\vskip .3cm
\noindent
{\bf Claim 2. $\alpha$ has precisely $2((-1)^\del q\chi_+(M) - \sum_{m=-\infty}^{q} b_m) + b_q$ simple closed orbits}
\vskip .2cm

From the first claim, $\alpha$ has finitely many simple closed orbits $\{\ga_1,\dots,\ga_r\}$. Moreover, $\mi(\ga_i)>0$ for every $1\leq i \leq r$. Take
\[
\ell_0:=\max_{1\leq i \leq r}\{\min\{k_0 \in \N;\ \cz(\ga_i^{k+\ell}) \geq  \cz(\ga_i^{k}) + n + 3\ \forall k\geq 1\text{ and }\forall \ell \geq k_0\}\},
\]
$\eta$ sufficiently small satisfying $\eta<1$ and Lemma \ref{lemma:resonance}, and $N$ in Theorem \ref{thm:IRT} big enough such that $\ell_0+2 \leq \min_{1\leq i \leq r} k_i$.

As in the previous section, by Theorem \ref{thm:IRT}, given $N \in \N$, $\eta>0$, and $\lo$ as above, there exist two sequences of integer vectors $(d^\pm_j,k^\pm_{1j},\dots,k^\pm_{rj})$ satisfying conditions \ref{cond:i}, \ref{cond:ii}, and \ref{cond:iii}, such that all entries $d^\pm_j,k^\pm_{1j},\dots,k^\pm_{rj}$ are divisible by $N$. We will only need one vector from each sequence, and we set
\begin{equation}
\label{eq:integers-lacunary}
(d,k_1,\dots,k_r):=(d^+_1,k^+_{11},\dots,k^+_{r1}), \qquad (d',k'_1,\dots,k'_r):=(d^-_1,k^-_{11},\dots,k^-_{r1}).
\end{equation}

\vskip .3cm
\noindent
{\bf Step 1. $\alpha$ has at least $(-1)^\del q\chi_+(M) - \sum_{m=-\infty}^{q} b_m + b_0$ simple closed orbits}
\vskip .2cm

By property \ref{cond:ii} of Theorem \ref{thm:IRT},
\[
\cz(\ga_i^{k_i -\ell}) = d - \cz(\ga_i^\ell)
\]
\[
\cz(\ga_i^{k_i +\ell}) = d + \cz(\ga_i^\ell)
\]
for every $1 \leq \ell \leq \ell_0$. Therefore,
\begin{equation}
\label{eq:1}
\#\{\ell \in [-\ell_0,\ell_0]\setminus\{0\};\, \cz(\ga_i^{k_i +\ell}) < d\} = \#\{\ell \in [-\ell_0,\ell_0]\setminus\{0\};\, \cz(\ga_i^{k_i +\ell}) > d\}.
\end{equation}
(Note here that $k_i > \ell_0$.) From these two equations we also have that an orbit of index $d$ can happen in two different ways: either it is an orbit of the form $\ga_i^{k_i}$ or an orbit of the form $\ga_i^{k_i+\ell}$ for some $\ell \in [-\ell_0,\ell_0]\setminus\{0\}$. (Indeed, by the definition of $\ell_0$, $\cz(\ga_i^{k_i\pm\ell})\neq d$ for every $\ell>\ell_0$.) In the last case, $\cz(\ga_i^\ell)=0$ and we have the pair of orbits $(\ga_i^{k_i-\ell},\ga_i^{k_i+\ell})$ with index $d$.

Define $c_{0,i}=\#\{j \in \N;\, \cz(\ga_i^j)=0\}$ and ${\bar c}_{d,i}=\#\{j \in [-\ell_0,\ell_0]\setminus\{0\};\, \cz(\ga_i^{k_i+j})=d\}$. By the previous discussion,
\begin{equation}
\label{eq:5}
{\bar c}_{d,i} = 2c_{0,i}.
\end{equation}
Now, let $a_i=\#\{\ell \in [-\ell_0,\ell_0]\setminus\{0\};\, \cz(\ga_i^{k_i +\ell}) < d\}$ and $b_i=\#\{\ell \in [-\ell_0,\ell_0]\setminus\{0\};\, \cz(\ga_i^{k_i +\ell}) > d\}$. Clearly,
\begin{equation}
\label{eq:6}
a_i + b_i + {\bar c}_{d,i} = 2\ell_0.
\end{equation}
By the definition of $\ell_0$, our choice of $\eta$, and the fact that $\cz(\ga) < \mi(\ga)+n$ for every non-degenerate closed orbit $\ga$,
\begin{equation}
\label{eq:3}
\cz(\ga_i^{k_i-\ell}) \leq \cz(\ga_i^{k_i}) - (n+3) \leq \mi(\ga_i^{k_i}) + n - (n+3) \leq d+1 + n - (n+3) = d-2,
\end{equation}
for every $\ell_0+1 \leq \ell < k_i$, where the third inequality holds by property \ref{cond:i} of Theorem \ref{thm:IRT}. Similarly,
\begin{equation}
\label{eq:4}
\cz(\ga_i^{k_i+\ell}) \geq d+2,
\end{equation}
for all $\ell \geq \ell_0$. Using \eqref{eq:1}, \eqref{eq:5} and \eqref{eq:6} we arrive at
\begin{equation}
\label{eq:7}
a_i + c_{0,i} = \ell_0.
\end{equation}

Thus, we conclude that
\begin{align*}
\#\{j \in \N,\, j\neq k_i;\, \cz(\ga_i^j) \leq d\} & = \#\{1\leq j \leq k_i+\ell_0,\, j\neq k_i;\, \cz(\ga_i^j) \leq d\} \\
& = k_i - \ell_0 - 1 + \#\{k_i - \ell_0 \leq j \leq k_i+\ell_0,\, j\neq k_i;\, \cz(\ga_i^j) \leq d\} \\
& = k_i - \ell_0 - 1 + a_i +  {\bar c}_{d,i} \\
& = k_i - \ell_0 - 1 +  a_i + 2c_{0,i}\\
& = k_i - \ell_0 - 1 + \ell_0 + c_{0,i} \\
& = k_i - 1 + c_{0,i},
\end{align*}
where the first equation follows \eqref{eq:4}, the second equation follows from \eqref{eq:3}, the third identity holds by the definitions of $a_i$ and ${\bar c}_{d,i}$, the fourth uses \eqref{eq:5} and the fifth is a consequence of \eqref{eq:7}. Since $\alpha$ is lacunary, $\sum_{i=1}^r c_{0,i} = b_0$. Hence, we conclude that
\begin{align*}
\sum_{m=\cmin}^{d} c_m & = \sum_{i=1}^r \#\{j \in \N;\ \cz(\ga_i^j)\leq d\} \\
& = \sum_{i=1}^r \#\{j \in \N,\, j\neq k_i;\, \cz(\ga_i^j) \leq d\} + \#\{1\leq i \leq r;\, \cz(\ga_i^{k_i})\leq d\} \\
& = \sum_{i=1}^r k_i  - r + (r-r_+) + \sum_{i=1}^r c_{0,i} \\
& = \sum_{i=1}^r k_i - r_+ + b_0,
\end{align*}
where $r_+:=\#\{1\leq i\leq r;\ \cz(\ga_i^{k_i})>d\}$ . Therefore, by Lemma \ref{lemma:resonance},
\begin{equation}
\label{eq:sumc_m-odd-2}
\sum_{m=\cmin}^{d} c_m = (-1)^\del d\chi_+(M) - r_+ + b_0,
\end{equation}
because $\hat{\chi}(\gamma_i)=(-1)^\del$ for every $1\leq i\leq r$ since $\alpha$ is lacunary with parity $\del$.

Thus,
\begin{align*}
r_+ & = (-1)^\del d\chi_+(M) - \sum_{m=\cmin}^{d} c_m + b_0 \nonumber \\
& =  (-1)^\del d\chi_+(M) - \sum_{m=-\infty}^{d} b_m + b_0 \nonumber \\
& = (-1)^\del q\chi_+(M) - \sum_{m=-\infty}^{q} b_m + b_0,
\end{align*}
where the last equality follows from Proposition \ref{prop:periodicity}.

\vskip .3cm
\noindent
{\bf Step 2. Existence of other $(-1)^\del q\chi_+(M) - \sum_{m=-\infty}^{q} b_m + b_0$ simple closed orbits}
\vskip .2cm

Applying the argument from the previous step for the integer vector $(d',k'_1,\dots,k'_r)$ in \eqref{eq:integers-lacunary} provided by Theorem \ref{thm:IRT}, we get
\begin{align*}
r_- & :=\#\{1\leq i\leq r;\ \cz(\ga_i^{k'_i})>d'\} \nonumber \\
& = (-1)^\del d'\chi_+(M) - \sum_{m=\cmin}^{d'} c_m + b_0 \nonumber \\
& =  (-1)^\del d'\chi_+(M) - \sum_{m=-\infty}^{d'} b_m + b_0 \nonumber \\
& = (-1)^\del q\chi_+(M) - \sum_{m=-\infty}^{q} b_m + b_0,
\end{align*}
where, again, the last equality follows from Proposition \ref{prop:periodicity}.

But, by property \ref{cond:iii} of Theorem \ref{thm:IRT}, we have that
\[
\#\{1\leq i\leq r;\ \cz(\ga_i^{k'_i})>d'\} = \#\{1\leq i\leq r;\ \cz(\ga_i^{k_i})<d\}.
\]

\vskip .3cm
\noindent
{\bf Step 3. Existence of more $b_q - 2b_0$ simple closed orbits}
\vskip .2cm

From Steps 1 and 2 we get $2((-1)^\del q\chi_+(M) + \sum_{m=-\infty}^{q} b_m + b_0)$ closed orbits given by the $\ga_i's$ such that $\cz(\ga_i^{k_i})<d$ and $\cz(\ga_i^{k_i})>d$. As mentioned earlier, the set of closed orbits with index $d$ splits into two sets: orbits of the form $\ga_i^{k_i}$ such that $\cz(\ga_i^{k_i})=d$ and pairs of orbits $(\ga_i^{k_i-\ell},\ga_i^{k_i+\ell})$ with $\cz(\ga_i^\ell)=0$. The latter was already counted in Steps 1 and 2 and has cardinality
\[
\sum_{i=1}^r {\bar c}_{d,i} = 2b_0.
\]
So we get more
\begin{align*}
\#\{1\leq i\leq r;\ \cz(\ga_i^{k_i})=d\} & = b_d - 2b_0 \\
& = b_q - 2b_0
\end{align*}
new orbits given by the former set, where the last equality holds by the periodicity of $\HC_*(M)$.

\vskip .3cm
\noindent
{\bf Step 4. Existence of precisely $2((-1)^\del q\chi_+(M) - \sum_{m=-\infty}^{q} b_m) + b_q$ simple closed orbits}
\vskip .2cm

By Steps 1 and 2,
\[
\#\{1\leq i\leq r;\ \cz(\ga_i^{k_i})<d\} + \#\{1\leq i\leq r;\ \cz(\ga_i^{k_i})>d\} = 2((-1)^\del q\chi_+(M) - \sum_{m=-\infty}^{q} b_m) + 2b_0. 
\]
This, together with Step 3, yields
\begin{align*}
r & = \#\{1\leq i\leq r;\ \cz(\ga_i^{k_i})<d\} + \#\{1\leq i\leq r;\ \cz(\ga_i^{k_i})>d\} + \#\{1\leq i\leq r;\ \cz(\ga_i^{k_i})=d\} \\
& = 2((-1)^\del q\chi_+(M) - \sum_{m=-\infty}^{q} b_m) + 2b_0 + b_q - 2b_0 \\
& = 2((-1)^\del q\chi_+(M) - \sum_{m=-\infty}^{q} b_m) + b_q,
\end{align*}
finishing Step 4.

To finish the proof Theorem \ref{thm:main2}, just note that if $\alpha$ is odd then so is $\HC_*(M)$ and consequently
\begin{equation*}
r_M^o = 2(-q\chi_+(M) - \sum_{j=-\infty}^q b_j) = r,
\end{equation*}
since $q$ is even. Similarly, if $\alpha$ is even,
\begin{equation*}
r_M^e = 2(q\chi_+(M) - \sum_{j=-\infty}^{q} b_j) + b_{q} = r.
\end{equation*}

We end up this section with the following proposition, which plays an important role in the proof of Theorem \ref{thm:main1}. As in Proposition \ref{prop:PD preq}, it shows that $\HC_*(M)$ exhibits a symmetry in sufficiently large degrees whenever $M$ satisfies the assumptions of Theorem \ref{thm:main2}; cf. Remark \ref{rmk:PD}. Its proof is a simple consequence of the argument used above in the proof of Theorem \ref{thm:main2}. A particular case, when $M$ is a prequantization of an orbifold admitting a Hamiltonian circle action, is established in Proposition \ref{prop:PD preq_orb} by completely different methods.

\begin{proposition}
\label{prop:PD finite}
Let $(M^{2n+1},\xi)$ be a closed contact manifold satisfying either \ref{cond:F} or \ref{cond:NF}. Assume that $\HC_*(M)$ is periodic, $b_j \neq 0$ for some $j$, $b_j = 0$ for all $j$ sufficiently negative, and $b_j < \infty$ for all $j \in \Z$. Assume that $M$ admits a non-degenerate lacunary contact form. Given an integer $p\geq 0$ there exists $C>0$ such that if $d=s\D$ is bigger than $C$ then
\[
b_{d-j} = b_{d+j}
\]
for every $j \in [-p,p]$.
\end{proposition}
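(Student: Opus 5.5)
Fix a non-degenerate lacunary contact form $\alpha$ on $M$, index-admissible under \ref{cond:NF}. The plan is to read the symmetry off the iteration structure of its orbits at the special degree $d$ produced by Theorem \ref{thm:IRT}, and then transfer it to every large multiple of $\D$ by periodicity.

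By the proof of Theorem \ref{thm:main2} (Claim 1, via Lemma \ref{lemma:combinatorial}), $\alpha$ has finitely many simple closed orbits $\ga_1,\dots,\ga_r$. All of them have positive mean index and all iterates are good. Moreover \eqref{eq:perorbs} gives $b_m=c_m=\sum_i \#\{k\in\N;\ \cz(\ga_i^k)=m\}$.

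Choose $\ell_0$ as in the proof of Theorem \ref{thm:main2}, but enlarged so that $\cz(\ga_i^{k+\ell})\geq \cz(\ga_i^k)+2n+p+1$ for all $k\geq1$ and $\ell\geq\ell_0$, and also $\ell_0\geq p$. Then apply Theorem \ref{thm:IRT} with $N$ a multiple of $\D$ and $\eta$ small. This yields $d_0=s_0\D$ and even $k_i$ with $\cz(\ga_i^{k_i\pm\ell})=d_0\pm\cz(\ga_i^\ell)$ for $1\le\ell\le\ell_0$, and $|\cz(\ga_i^{k_i})-d_0|\le n$. Since $\alpha$ is lacunary, we may take $\eta$ so small that $\cz(\ga_i^{k_i})$ has the parity $\del$ and is bounded by the choice of index bounds.

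Now count the iterates with index $d_0+j$ for $j\in[-p,p]$, using three classes.

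\begin{itemize}
\item[(a)] Iterates $\ga_i^{k_i+\ell}$ with $\ell>\ell_0$ have index $>d_0+p$. Iterates with $\ell_0<\ell<k_i$ on the negative side have index $<d_0-p$. Neither class contributes.
\item[(b)] The iterates $\ga_i^{k_i\pm\ell}$ with $1\le\ell\le\ell_0$ contribute exactly $\#\{\ell;\ \cz(\ga_i^\ell)=j\}+\#\{\ell;\ \cz(\ga_i^\ell)=-j\}$ to the index $d_0+j$. This expression is symmetric in $j\mapsto -j$.
\item[(c)] The middle iterates $\ga_i^{k_i}$ contribute $\#\{i;\ \cz(\ga_i^{k_i})=d_0+j\}$.
\end{itemize}

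So $b_{d_0+j}-b_{d_0-j}$ equals the difference of the middle counts at $d_0+j$ and $d_0-j$. The goal is to show that this vanishes, i.e.\ that the middle indices $\cz(\ga_i^{k_i})-d_0$ are symmetric about $0$.

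For this I would use item \ref{cond:iii} of Theorem \ref{thm:IRT}. The companion degree $d_0'$ satisfies $\cz(\ga_i^{k_i'})-d_0'=-(\cz(\ga_i^{k_i})-d_0)$. Repeating the count above at $d_0'$ gives
\[
b_{d_0'+j}=(\text{symmetric part})+\#\{i;\ \cz(\ga_i^{k_i})=d_0-j\}.
\]
The symmetric part is identical at $d_0$ and at $d_0'$, because it depends only on the low iterates $\ga_i^\ell$ with $\ell\le\ell_0$. Since $d_0$ and $d_0'$ are both multiples of $\D$ and at least $P+p$, periodicity of $\HC_*(M)$ gives $b_{d_0'+j}=b_{d_0+j}$. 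Comparing the two expressions yields
\[
\#\{i;\ \cz(\ga_i^{k_i})=d_0+j\}=\#\{i;\ \cz(\ga_i^{k_i})=d_0-j\},
\]
hence $b_{d_0+j}=b_{d_0-j}$.

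Finally, periodicity transfers the identity from $d_0$ to every $d=s\D$ with $d-p\geq P$. This gives the statement with $C=P+p$.

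The main obstacle is the bookkeeping in step (a). I need to ensure that no iterate outside the window $k_i\pm\ell_0$ lands in $[d_0-p,d_0+p]$, at both $d_0$ and $d_0'$. This is where the enlarged choice of $\ell_0$, relying on positive mean index, must be verified carefully.
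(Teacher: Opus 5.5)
Your proof is correct and follows the paper's route. Both arguments fix the lacunary form so that $b_m=c_m$, use the window $k_i\pm\ell_0$ from Theorem \ref{thm:IRT} to write $b_{d+j}=c_j+c_{-j}+\#\{i;\ \cz(\ga_i^{k_i})=d+j\}$, invoke item \ref{cond:iii} at the companion degree $d'$ to get $b_{d'+j}=b_{d-j}$, and conclude by periodicity from $b_{d'+j}=b_{d+j}$. Your last step, which transfers the symmetry from the special degree $d_0$ to every multiple $s\D\geq P+p$, is only implicit in the paper, so spelling it out is a small but welcome addition.
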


\begin{proof}
Fix a non-degenerate lacunary contact form $\alpha$ on $M$. As in the proof above of Theorem \ref{thm:main2}, we have $b_j=c_j$ for every $j \in \Z$ since the differential in $\CC_*(\alpha)$ vanishes. Take $\lo$ big enough such that all the orbits that contribute to $c_{d\pm j}$, with $j \in [-p,p]$, are of the form $\ga_i^{k_i\pm \l}$ with $\l \in [-\lo,\lo]$. By property \ref{cond:ii} of Theorem \ref{thm:IRT}, we have that an orbit with index $d\pm j$ can happen in two different ways: either it is an orbit of the form $\ga_i^{k_i}$ or an orbit of the form $\ga_i^{k_i+\ell}$ for some $\ell \in [-\ell_0,\ell_0]\setminus\{0\}$ such that $\cz(\ga_i^\l)=j$ or $\cz(\ga_i^\l)=-j$. In this way, we arrive at
\begin{equation}
\label{eq:b_{d+j}}
b_{d+j} = c_{d+j} = c_{-j} + c_j + \#\{1\leq i \leq r;\,\cz(\ga_i^{k_i})=d+j\},
\end{equation}
for every $j \in [-p,p]$.

Now, arguing as in Step 2 above, we have the integer vector $(d',k'_1,\dots,k'_r)$ such that
\[
(d' - \cz(\ga_i^{k'_i})) = -(d - \cz(\ga_i^{k_i})).
\]
Hence, by \eqref{eq:b_{d+j}},
\begin{align*}
\label{eq:b_{d'+j}}
b_{d'+j} & = c_{d'+j} \\
& = c_{-j} + c_j + \#\{1\leq i \leq r;\,\cz(\ga_i^{k'_i})=d'+j\} \\
& = c_{-j} + c_j + \#\{1\leq i \leq r;\,\cz(\ga_i^{k_i})=d-j\} \\
& = b_{d-j}, \numberthis
\end{align*}
for all $j \in [-p,p]$.

Taking $N$ big enough in Theorem \ref{thm:IRT} we can ensure that $d-p>P$ and $d'-p>P$. Since both $d$ and $d'$ are multiples of $\D$, $d'-d$ is a multiple of $\D$ as well and consequently, by the periodicity of $\HC_*(M)$ and \eqref{eq:b_{d'+j}},
\[
b_{d+j} = b_{d'+j} = b_{d-j}
\]
for every $j \in [-p,p]$, proving the desired result.
\end{proof}

\section{Proof of Theorem \ref{thm:r_M}}
\label{sec:proof r_M}

Let us consider each case separately.

\vskip .3cm
\noindent
{\bf Case \ref{PM}}
\vskip .2cm

Suppose that $\H_*(B;\Q)$ is lacunary or that the minimal Chern number $c_B$ is greater than $n$. Moreover, under assumption \ref{cond:NF} assume that $c_B>1$. Under these conditions, in \cite[Proposition 2.1]{AM5} and \cite[Proposition 3.1]{GGM2}, it is computed the positive equivariant symplectic homology for contractible orbits of $M$ given by
\[
\HC^0_\ast(M) \cong \bigoplus_{m\in\N} \H_{\ast -2mc_B+n} (B; \Nov), 
\]
where $c_B$ is the minimal Chern number of $B$.

Let $\bD$ be the Robbin-Salamon index of the simple orbit of the $S^1$-action on $M$ which is positive since $B$ is positive monotone and it is an even number since the linearized Reeb flow on the contact structure generates a loop based at the identity in $\Sp(2n)$ and in this case the Robbin-Salamon is twice the Maslov index of such loop; see, for instance, \cite{RS}. When we do not restrict ourselves to contractible orbits, the proof of \cite[Proposition 2.1]{AM5} and \cite[Proposition 3.1]{GGM2}, works verbatim to show that if $\H_*(B;\Q)$ is lacunary or $\bD>n$ then
\begin{equation}
\label{eq:ESHpreq}
\HC_\ast(M) \cong \bigoplus_{m\in\N} \H_{\ast -m\bD+n} (B; \Nov). 
\end{equation}
As before, under assumption \ref{cond:NF} we have to further assume that the minimal Chern number $c_B$ is bigger than one.

\begin{remark}
The similarity between the notations of the period $\D$ of $\HC_*(M)$ and the Robbin-Salamon index $\bD$ hinges on the fact that it follows easily from \eqref{eq:ESHpreq} that $\bD$ is a period for $\HC_*(M)$. However, since the period is not unique, we rather use different notations.
\end{remark}

The isomorphism \eqref{eq:ESHpreq} should be true without any assumption on $B$, but it is not known so far. Motivated by this, we will prove a more general version of Case \ref{PM} where we do not assume that the rational homology of $B$ is lacunary (cf. Remark \ref{rmk:PM general}) but assuming that \eqref{eq:ESHpreq} holds (which is true if $\bD>n$ as mentioned above). More precisely, we will prove that, in this general case, given an integer $p\geq 0$ with parity different from that of $n$,
\begin{equation}
\label{eq:r_M-PM-general}
r_M=
\begin{cases}
\chi(B) + \dim \H_n(B;\Q) + 2\sum_{m=1}^{\lfloor n/\bD \rfloor} \dim \H_{n-m\bD}(B;\Q) \text{ if }n\text{ is odd and }p=0, \\
\chi(B) + 2\dim \H_{n+p}(B;\Q) + 2\sum_{m=1}^{\lfloor (n+p)/\bD \rfloor} \dim \H_{n-m\bD+p}(B;\Q) \\
+ 2\sum_{m=1}^{\lfloor (n-p)/\bD \rfloor} \dim \H_{n-m\bD-p}(B;\Q), \text{ if }n\text{ is odd and }p\geq 2 \\
\chi(B) + 2\dim \H_{n+p}(B;\Q) + 2\sum_{m=1}^{\lfloor (n-p)/\bD \rfloor} \dim \H_{n-m\bD-p}(B;\Q) \\
+ 2\sum_{m=1}^{\lfloor (n+p)/\bD \rfloor} \dim \H_{n-m\bD+p}(B;\Q) \text{ if }n\text{ is even},
\end{cases}
\end{equation}
where $r_M$ is given by \eqref{eq:r_M general}. Note that it clearly implies that $r_M=\dim \H_*(B;\Q)$ whenever $\H_{\text{odd}}(B;\Q)=0$ since $\bD$ is even and $p\neq n\,(\bmod\,2)$.

We shall need the following lemma essentially taken from \cite[Lemma A.1]{AM5}. Set $d=s\bD$ for some $s\in \N$. In what follows recall that the dimension of $M$ is $2n+1$.

\begin{lemma}
\label{lemma:sumb_m}
We can choose $s$ sufficiently large such that
\[
2\sum_{m=-\infty}^{d} (-1)^mb_m =  (-1)^n(2s-1)\chi(B) + \dim \H_n(B;\Q) + 2\sum_{m=s+1}^{2s-1} \dim \H_{n+(s-m)\bD}(B;\Q).
\]
\end{lemma}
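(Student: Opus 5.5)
The plan is to compute the left-hand side directly from the isomorphism \eqref{eq:ESHpreq} and then fold the resulting sum in half using Poincar\'e duality on $B$; this is the same mechanism as in the proof of Proposition \ref{prop:PD preq}. Write $h_k=\dim \H_k(B;\Q)$, with $h_k=0$ for $k\notin[0,2n]$, and
\[
S(t)=\sum_{k\le t}(-1)^k h_k .
\]
Note that $S(t)=0$ for $t<0$ and $S(t)=\chi(B)$ for $t\ge 2n$.

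First I will rewrite the left-hand side in terms of $S$. By \eqref{eq:ESHpreq}, $b_j=\sum_{m\ge1} h_{j-m\bD+n}$. Since $\bD$ is even, $(-1)^j=(-1)^n(-1)^{j-m\bD+n}$. I will interchange the two sums (this is a finite rearrangement, because $b_j=0$ for $j$ very negative) and substitute $k=j-m\bD+n$. With $d=s\bD$ this gives
\[
\sum_{j\le d}(-1)^j b_j=(-1)^n\sum_{m\ge1} S\big(n+(s-m)\bD\big).
\]
For $m\ge 2s$ the argument satisfies $n+(s-m)\bD\le n-s\bD<0$ once $s$ is large, namely $s\bD>n$. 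So only the terms with $1\le m\le 2s-1$ survive, and this is the only place where "$s$ sufficiently large" is needed.

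Next I will apply Poincar\'e duality, $h_k=h_{2n-k}$. It gives
\[
S(n-a)=\sum_{k\ge n+a}(-1)^k h_k
\]
for every integer $a$. Adding $S(n+a)$ double counts the single term $k=n+a$, so
\[
S(n+a)+S(n-a)=\chi(B)+(-1)^{n+a}h_{n+a}=\chi(B)+(-1)^n h_{n+a}
\]
whenever $a$ is even.

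I will then pair the index $m\in\{1,\dots,s-1\}$ with $2s-m\in\{s+1,\dots,2s-1\}$, taking $a=(s-m)\bD$.
\begin{itemize}
\item Each such pair contributes $\chi(B)+(-1)^n h_{n+(s-m)\bD}$.
\item By duality, $h_{n+(s-m)\bD}=h_{n+(s-m')\bD}$ with $m'=2s-m$, so the leftover Betti numbers can be re-indexed over $m'\in[s+1,2s-1]$.
\item The unpaired middle term $m=s$ (the case $a=0$) satisfies $2S(n)=\chi(B)+(-1)^n h_n$.
\end{itemize}

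Finally I will double the sum, multiply by $(-1)^n$, and count the $\chi(B)$ terms: $2(s-1)$ come from the pairs and one from the middle term. This gives
\[
2\sum_{m\le d}(-1)^m b_m=(-1)^n(2s-1)\chi(B)+h_n+2\sum_{m=s+1}^{2s-1}h_{n+(s-m)\bD},
\]
which is the claimed identity.

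The only delicate point is bookkeeping. The boundary term $k=n+a$ in the duality pairing must be counted exactly once, and the parity of $a$, which is even because $\bD$ is even, must be tracked so that the sign becomes $(-1)^n$. No analytic input is needed beyond \eqref{eq:ESHpreq}.
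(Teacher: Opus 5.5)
Your proof is correct and follows the paper's route: expand via \eqref{eq:ESHpreq}, discard the copies with $m\ge 2s$ using $s\bD>n$, and fold the remaining $2s-1$ copies with Poincar\'e duality by pairing $m$ with $2s-m$. The only difference is bookkeeping. Your identity $S(n+a)+S(n-a)=\chi(B)+(-1)^{n+a}h_{n+a}$ does in one line what the paper does through the regrading $*'=2s\bD-*$, the three-case computation of $d_m$ in \eqref{eq:d_m1}--\eqref{eq:d_m3}, and the cancellation \eqref{eq:d_m4}.
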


\begin{proof}
Take $s$ big enough such that $s\bD > n$. Change the grading of $\HC_*(M)$ to $*' = 2s\bD - *$ so that
\[
\HC_{*'}(M) \cong \bigoplus_{m \in \N} \H_{2s\bD-*'-m\bD+n}(B;\Nov).
\]
Thus, since $\bmin=\min\{m \in \Z,\,b_m\neq 0\}=\bD-n$,
\begin{align*}
2\sum_{m=-\infty}^{d} (-1)^mb_m & = 2\sum_{*=\bD-n}^{s\bD} (-1)^*b_* \\
& = 2\sum_{*'=s\bD}^{n+(2s-1)\bD} \sum_{m=1}^\infty (-1)^{*'}\dim \H_{(2s-m)\bD - *' + n}(B;\Q) \\
& = 2\sum_{*'=s\bD}^{n+(2s-1)\bD} \sum_{m=1}^{2s-1} (-1)^{*'}\dim \H_{(2s-m)\bD - *' + n}(B;\Q) \\
& = \sum_{m=1}^{2s-1} \overbrace{\sum_{*'=s\bD}^{n+(2s-1)\bD} (-1)^{*'}(\dim \H_{(2s-m)\bD - *' + n}(B;\Q) + \dim \H_{n + *' - (2s-m)\bD }(B;\Q))}^{d_m},
\end{align*}
where the third equality follows from the fact that $s\bD > n$, the last equation holds by Poincar\'e duality and $d_m$ is defined as in the equation.

We have to prove that
\begin{equation}
\label{eq:claim}
\sum_{m=1}^{2s-1} d_m = (-1)^n(2s-1)\chi(B) + \dim \H_n(B;\Q) + 2\sum_{m=s+1}^{2s-1} \dim \H_{n+(s-m)\bD}(B;\Q).
\end{equation}
In order to do it, let us look at the first term of $d_m$. When $*'$ runs from $s\bD$ to $(2s-1)\bD+n$, it sums up the ranks of the homology of $B$ from the degree $-m\bD+\bD\leq 0$ to $n+(s-m)\bD$. Similarly, the second term of $d_m$ sums up the ranks of the homology of $B$ from the degree $n-(s-m)\bD$ to $2n+(m-1)\bD \geq 2n$.

Hence, we have the following computations of $d_m$ according to the relation between $m$ and $s$:
\begin{itemize}
\item $m<s\ \iff\ (s-m)\bD>0$
\begin{align*}
(-1)^nd_m & = \sum_{j=0}^{n+(s-m)\bD} (-1)^j\dim \H_j(B;\Q) + \sum_{j=n-(s-m)\bD}^{2n} (-1)^j\dim \H_j(B;\Q) \\
& = \chi(B) + \sum_{j=n-(s-m)\bD}^{n+(s-m)\bD}  (-1)^j\dim \H_j(B;\Q). \numberthis \label{eq:d_m1}
\end{align*}

\item $m=s\ \iff\ (s-m)\bD=0$
\begin{equation}
\label{eq:d_m2}
(-1)^nd_m = \sum_{j=0}^{n} (-1)^j\dim \H_j(B;\Q) + \sum_{j=n}^{2n} (-1)^j\dim \H_j(B;\Q) = \chi(B) + (-1)^n\dim \H_n(B;\Q).
\end{equation}

\item $m>s\ \iff\ (s-m)\bD<0$
\begin{align*}
(-1)^nd_m & = \sum_{j=0}^{n+(s-m)\bD} (-1)^j\dim \H_j(B;\Q) + \sum_{j=n-(s-m)\bD}^{2n} (-1)^j\dim \H_j(B;\Q) \\
& = \chi(B) - \sum_{j=n+(s-m)\bD}^{n-(s-m)\bD}  (-1)^j\dim\H_j(B;\Q) + (-1)^n\dim \H_{n+(s-m)\bD}(B;\Q) + \\
& + (-1)^n\dim\H_{n-(s-m)\bD}(B;\Q) \\
& = \chi(B) - \sum_{j=n+(s-m)\bD}^{n-(s-m)\bD}  (-1)^j\dim \H_j(B;\Q) + (-1)^n2\dim \H_{n+(s-m)\bD}(B;\Q), \numberthis \label{eq:d_m3}
\end{align*}
where the last equality holds by Poincar\'e duality.
\end{itemize}
Clearly,
\begin{equation}
\label{eq:d_m4}
\sum_{m=1}^{s-1} \sum_{j=n-(s-m)\bD}^{n+(s-m)\bD} (-1)^j\dim \H_j(B;\Q) = \sum_{m=s+1}^{2s-1} \sum_{j=n+(s-m)\bD}^{n-(s-m)\bD} (-1)^j\dim \H_j(B;\Q).
\end{equation}
Hence, \eqref{eq:claim} follows from \eqref{eq:d_m1}, \eqref{eq:d_m2}, \eqref{eq:d_m3} and \eqref{eq:d_m4}.
\end{proof}

Take an integer $p\geq 0$. Note that taking $s$ big enough such that $-s\bD < -n-p-1$ we have from \eqref{eq:ESHpreq} the equalities
\begin{equation}
\label{eq:b_0}
b_i = \sum_{m=s+1}^{2s-1} \dim \H_{n+(s-m)\bD+i}(B;\Q),
\end{equation}
for every $i \in \{0,\pm p\}$;
\begin{align*}
\label{eq:b_d}
b_d & = \dim \bigoplus_{m=1}^\infty \H_{n+(s-m)\bD}(B;\Q) \\
& = \sum_{m=s+1}^{2s-1} \dim \H_{n+(s-m)\bD}(B;\Q) + \sum_{m=1}^{s-1} \dim \H_{n+(s-m)\bD}(B;\Q) + \dim \H_n(B;\Q) \\
& = \sum_{m=s+1}^{2s-1} \dim \H_{n+(s-m)\bD}(B;\Q) + \sum_{m=s+1}^{2s-1} \dim \H_{n+(s-m)\bD}(B;\Q) + \dim \H_n(B;\Q) \\
& = 2b_0 + \dim \H_n(B;\Q); \numberthis
\end{align*}
\begin{align*}
\label{eq:b_d+p}
b_{d+p} & = \dim \bigoplus_{m=1}^\infty \H_{n+(s-m)\bD+p}(B;\Q) \\
& = \sum_{m=s+1}^{2s-1} \dim \H_{n+(s-m)\bD+p}(B;\Q) + \sum_{m=1}^{s-1} \dim \H_{n+(s-m)\bD+p}(B;\Q) + \dim \H_{n+p}(B;\Q) \\
& = \sum_{m=s+1}^{2s-1} \dim \H_{n+(s-m)\bD+p}(B;\Q) + \sum_{m=s+1}^{2s-1} \dim \H_{n+(s-m)\bD-p}(B;\Q) + \dim \H_{n+p}(B;\Q) \\
& = b_p + b_{-p} + \dim \H_{n+p}(B;\Q); \numberthis
\end{align*}
where, in \eqref{eq:b_d} and \eqref{eq:b_d+p}, the third equation holds by Poincar\'e duality and the fourth equality follows from \eqref{eq:b_0}.

Assume, without loss of generality, that $d>P$. Now, we will split the argument according to the parity of $n$.

\vskip .3cm
\noindent
{\bf Case 1. $n$ is odd}
\vskip .2cm

Let us compute $r_M$ for a given even $p\geq 0$. Since $n$ is odd, from \eqref{eq:ESHpreq},
\[
\chi_+(M) = -\frac{\chi(B)}{\bD}\ \implies\ d\chi_+(M)=-s\chi(B).
\]
Hence, for $p=0$,
\begin{align*}
r_M & = 2(-q\chi_+(M) + \sum_{m=-\infty}^q (-1)^mb_m) \\
& = 2(-d\chi_+(M) + \sum_{m=-\infty}^d (-1)^mb_m) \\
& = 2s\chi(B) - 2s\chi(B) + \chi(B) + 2b_0 + \dim \H_n(B;\Q) \\
& = \chi(B) + 2b_0 + \dim \H_n(B;\Q) \\
& = \chi(B) + \dim \H_n(B;\Q) + 2\sum_{m=1}^{\lfloor n/\bD \rfloor} \dim \H_{n-m\bD}(B;\Q),
\end{align*}
where the second equality follows from Proposition \ref{prop:periodicity} and the periodicity of $\HC_*(M)$, the third holds by Lemma \ref{lemma:sumb_m} and \eqref{eq:b_0} and the last is a consequence of \eqref{eq:ESHpreq}.

Similarly, when $p\geq 2$,
\begin{align*}
r_M & = 2(-q\chi_+(M) + \sum_{m=-\infty}^q (-1)^mb_m) + 2b_{q+p} - b_q \\
& = 2(-d\chi_+(M) + \sum_{m=-\infty}^d (-1)^mb_m) + 2b_{d+p} - b_d \\
& = 2s\chi(B) - 2s\chi(B) + \chi(B) + 2b_0 + \dim \H_n(B;\Q) + 2b_{d+p} - b_d \\
& = \chi(B) + 2b_0 + \dim \H_n(B;\Q) + 2b_p + 2b_{-p} + 2\dim\H_{n+p}(B;\Q) - 2b_0 - \dim \H_n(B;\Q) \\
& = \chi(B) + 2b_p + 2b_{-p} + 2\dim\H_{n+p}(B;\Q) \\
& = \chi(B) + 2\dim \H_{n+p}(B;\Q) + 2\sum_{m=1}^{\lfloor (n+p)/\bD \rfloor} \dim \H_{n-m\bD+p}(B;\Q) \\
& + 2\sum_{m=1}^{\lfloor (n-p)/\bD \rfloor} \dim \H_{n-m\bD-p}(B;\Q),
\end{align*}
where the second equality follows from Proposition \ref{prop:periodicity} and the periodicity of $\HC_*(M)$, the third is a consequence of Lemma \ref{lemma:sumb_m} and \eqref{eq:b_0}, the fourth follows from \eqref{eq:b_d} and \eqref{eq:b_d+p}, and the last holds by \eqref{eq:ESHpreq}.

\vskip .3cm
\noindent
{\bf Case 2. $n$ is even}
\vskip .2cm

The argument is similar to the previous case. Take an odd $p\geq 1$. Since $n$ is even, from \eqref{eq:ESHpreq},
\[
\chi_+(M) = \frac{\chi(B)}{\bD}\ \implies\ d\chi_+(M)=s\chi(B).
\]
Therefore,
\begin{align*}
r_M & = 2(-q\chi_+(M) + \sum_{m=-\infty}^q (-1)^mb_m) + 2b_{q+p} + b_q \\
& = 2(d\chi_+(M) - \sum_{m=-\infty}^{d} (-1)^mb_m) + 2b_{d+p} + b_d \\
& = 2s\chi(B) - 2s\chi(B) + \chi(B) - 2b_0 - \dim \H_n(B;\Q) + 2b_{d+p} + b_d \\
& = \chi(B) - 2b_0 - \dim \H_n(B;\Q) + 2b_p + 2b_{-p} + 2\dim\H_{n+p}(B;\Q) + 2b_0 + \dim\H_n(B;\Q) \\
& = \chi(B) + 2b_p + 2b_{-p} + 2\dim\H_{n+p}(B;\Q) \\
& = \chi(B) + 2\dim \H_{n+p}(B;\Q) + 2\sum_{m=1}^{\lfloor (n-p)/\bD \rfloor} \dim \H_{n-m\bD-p}(B;\Q) \\
& + 2\sum_{m=1}^{\lfloor (n+p)/\bD \rfloor} \dim \H_{n-m\bD+p}(B;\Q),
\end{align*}
where, as before, the second equality follows from Proposition \ref{prop:periodicity} and the periodicity of $\HC_*(M)$, the third holds by Lemma \ref{lemma:sumb_m} and \eqref{eq:b_0}, the fourth follows from  \eqref{eq:b_d} and \eqref{eq:b_d+p}, and the last one goes from \eqref{eq:ESHpreq}. This completes the proof in case \ref{PM}.

\begin{remark}
In \cite[Theorem 2.1]{GGM2}, a lower bound for the number of periodic orbits of non-degenerate contact forms $\alpha$ on suitable prequantizations of symplectic manifolds $B$ was obtained, namely
\begin{equation}
\label{eq:r_M old}
r_B :=
\begin{cases}
\chi(B) + 2\dim H_n(B;\Q) & \text{if } n \text{ is odd}, \\
\chi(B) + 4\dim H_{n+1}(B;\Q) & \text{if } n \text{ is even},
\end{cases}
\end{equation}
under the assumption that $\alpha$ has no periodic orbit $\ga$ with $\cz(\ga)=0$ (resp.\ $\cz(\ga)\in\{0,\pm1\}$) when $n$ is odd (resp. even).

Under these assumptions, if $n$ is odd, we have
\begin{align*}
b_0 = \sum_{m=1}^{\lfloor n/\bD \rfloor} \dim \H_{n-m\bD}(B;\Q) = 0,
\end{align*}
which implies that our bound $r_M$ for $p=0$ is given by
\begin{equation}
\label{eq:r_M new-odd}
r_M = \chi(B) + \dim H_n(B;\Q).
\end{equation}
Similarly, if $n$ is even, then
\begin{align*}
b_{-1} &= \sum_{m=1}^{\lfloor (n-1)/\bD \rfloor} \dim \H_{n-m\bD-1}(B;\Q) = 0, \\
b_{1}  &= \sum_{m=1}^{\lfloor (n+1)/\bD \rfloor} \dim \H_{n-m\bD+1}(B;\Q) = 0,
\end{align*}
which implies that our bound $r_M$ for $p=1$ is
\begin{equation}
\label{eq:r_M new-even}
r_M = \chi(B) + 2\dim H_{n+1}(B;\Q).
\end{equation}

Observe that the bounds \eqref{eq:r_M old}, \eqref{eq:r_M new-odd}, and \eqref{eq:r_M new-even} coincide when $\H_{\mathrm{odd}}(B;\Q)=0$, but differ in general. This discrepancy arises from the fact that the bound $r_M$ in the present work is defined so that, if $\alpha$ has more than $r_M$ periodic orbits, then it cannot be lacunary.

However, we are not aware of any example of a prequantization of a symplectic manifold $B$ admitting Reeb flows with finitely many closed orbits for which the condition $\H_{\mathrm{odd}}(B;\Q)=0$ fails.
\end{remark}

\vskip .3cm
\noindent
{\bf Case \ref{T}}
\vskip .2cm

It is well known that good toric contact manifolds $M$ are prequantizations of orbifolds and admit (toric) contact forms $\alpha$, which are non-degenerate and lacunary, such that $\#\P_\alpha = \dim \H_*(M/S^1;\Q)$ \cite{AM0,AMM1}. If $M$ satisfies condition \ref{cond:F} or \ref{cond:NF}, one can define $\HC_*(M)$ and it is also known that  $\HC_*(M)$ is periodic, there exists an integer $D$ such that $\HC_j(M)=0$ for every $j < D$, $b_j < \infty$ for every $j \in \Z$ and $\HC_*(M)$ is lacunary with the same parity of $n$ \cite{AM0,AMM1,AMM2}. Then, if $M$ satisfies condition \ref{cond:F} or \ref{cond:NF} the result follows from Corollary \ref{cor:r_M lacunary}. 

When $M$ does not satisfy conditions \ref{cond:F} and \ref{cond:NF}, one can define $\HC_*(M)$ combinatorially and it also satisfies all the properties listed above \cite{AMM2}. This definition coincides with the one given in Section \ref{sec:ESH} whenever $M$ meets \ref{cond:F} or \ref{cond:NF}. Therefore we can define $r_M$ as in Theorem \ref{thm:main} and we show below that in this case still holds the equality $r_M= \dim \H_*(M/S^1;\Q)$. 

In what follows, we will omit several details and definitions and refer the reader to \cite{AMM2}. Let $D\subset\R^n$ be the toric diagram of a Gorenstein toric contact manifold $(M^{2n+1}, \xi)$ and $\delta_k \in \N_0$, $k=0,\ldots,n$, the non-negative integers determined by the Ehrhart polynomial of $D$. We have that the SFT contact Betti numbers of $(M^{2n+1}, \xi)$ are given by
\[
cb_{0} = \delta_n\,,\ cb_{2} = \delta_n + \delta_{n-1}\,,\ cb_4 = \delta_n + \delta_{n-1} + \delta_{n-2}\,,\ 
\ldots\,,\ cb_{2n} = \delta_n + \delta_{n-1} + \cdots + \delta_{0}\,,
\]
\[
cb_{2k} = cb_{2n}\,,\ \forall k>n\,,\ \text{and all other SFT contact Betti numbers are zero.}
\]
In particular,
\[
\sum_{k=0}^n cb_{2k} = \sum_{k=0}^n (k+1) \delta_k\,.
\]
Here the SFT contact Betti number is the contact Betti number used throughout this paper but with the SFT degree. This means that their relation to the $b_j$'s in the grading used here is
\[
cb_{j} = b_{j-n+2} \quad\text{or equivalently}\quad b_j = cb_{j+n-2}\,,\ \forall j\in\Z\,.
\]
In this Gorenstein toric context, we can choose $P= 2n -n +2 = n+2$, $\Delta = 2$ and we have that
\[
\chi_+(M) = (-1)^n \frac{\delta_n + \delta_{n-1} + \cdots + \delta_{0}}{2}\,.
\]

When $n$ is even we can take $q= n+2$ and therefore
\begin{align}
r_M & = 2 \left(q \chi (M) - \sum_{j=-\infty}^q b_j \right) + b_q \notag \\
& = 2 \left((n+2) \frac{\delta_n + \delta_{n-1} + \cdots + \delta_{0}}{2} - \sum_{k=0}^n cb_{2k}\right) + cb_{2n}\notag \\
& = (n+3) \sum_{k=0}^n \delta_k - 2 \sum_{k=0}^n (k+1) \delta_k = \sum_{k=0}^n (n+1-2k) \delta_k \notag \\
& = \# \ \text{of facets of $D$.} \notag
\end{align}

When $n$ is odd we can take $q= \lcm\{P,\Delta\} = \lcm\{n+2,2\} = 2n+4$ and consequently
\begin{align}
r_M & = 2 \left(- q \chi (M) - \sum_{j=-\infty}^q b_j \right) \notag \\
& = 2 \left(2(n+2) \frac{\delta_n + \delta_{n-1} + \cdots + \delta_{0}}{2} - \sum_{k=0}^{3n+2} cb_{k}\right) \notag \\
& = 2 \left((n+2) (\delta_n + \delta_{n-1} + \cdots + \delta_{0}) - \sum_{k=0}^{n} cb_{2k} - \frac{n+1}{2} cb_{2n}\right) \notag \\
& = 2 \left(\frac{n+3}{2} (\delta_n + \delta_{n-1} + \cdots + \delta_{0}) - \sum_{k=0}^{n} cb_{2k} \right) \notag \\
& = (n+3) \sum_{k=0}^n \delta_k - 2 \sum_{k=0}^n (k+1) \delta_k = \sum_{k=0}^n (n+1-2k) \delta_k \notag \\
& = \# \ \text{of facets of $D$,} \notag
\end{align}
proving the assertion in \ref{T} concerning the computation of $r_M$.

To prove the last assertion in \ref{T} we proceed as follows. The number of simple periodic Reeb orbits of a non-degenerate toric contact form on a good toric contact manifold of dimension $2n+1$ is given by the number of edges of its moment cone $C\subset \R^{n+1}$. This is a strictly convex rational good polyhedral cone of dimension $n+1$ and so, in particular, the primitive integral  vectors that generate its edges span the whole $\R^{n+1}$, which implies that $C$  has at least $n+1$ edges.

If $C\subset \R^{n+1}$ has exactly $n+1$ edges, then it also has exactly $n+1$ facets,  each of them being determined by $n$ of the $n+1$ edges, i.e.
\[
| \text{facets} | = \binom{n+1}{n} = n+1\,.
\]
Let $\nu_1,\ldots\nu_{n+1} \in \Z^{n+1}$ be the primitive interior integral normals to those facets and consider the map $\beta : \T^{n+1} \to \T^{n+1} $ represented by the matrix $[\nu_1 | \cdots | \nu_{n+1}]$. Then $\ker (\beta)$ is isomorphic to the finite cyclic group $\Z_p$, with $p = |\det ([\nu_1 | \cdots | \nu_{n+1}])|$, and the toric contact manifold determined by the cone $C\subset \R^{n+1}$ is the lens space
\[
S^{2n+1} / \Z_p
\]
where $\Z_p$ acts linearly on $S^{2n+1} \subset \C^{n+1}$ as the subgroup $\ker (\beta) \subset \T^{n+1}$.

\vskip .3cm
\noindent
{\bf Case \ref{PM} $\cap$ \ref{T} when $[\omega]=c_1(TB)$}
\vskip .2cm

Following the argument in case \ref{T}, we give a quick proof of case \ref{PM} in the toric case (i.e. a prequantization of a toric symplectic manifold) when $[\omega]=c_1(TB)$, using the combinatorial description of $\HC_*(M)$ described in \cite{AMM2}. Note that taking into account \cite[Remark 7.12]{AMM2}, when $M$ is the prequantization of a smooth basis $B$ with $r=1$, i.e. when the base polytope is reflexive or equivalently when $[\omega] = c_1(TB)$, we have that 
\[
\delta_k =  \dim \H^{2k} (B; \Q) = \dim \H^{2(n-k)} (B; \Q) = \delta_{n-k}
\]
and
\begin{align}
r_M & = \sum_{k=0}^n (n+1-2k) \delta_k = \frac{1}{2} \left( \sum_{k=0}^n (n+1-2k) \delta_k + \sum_{k=0}^n (2k+1-n) \delta_{n-k} \right) \notag \\
 & = \frac{1}{2} \left( \sum_{k=0}^n 2 \delta_k \right) = \sum_{k=0}^n  \dim \H^{2k} (B; \Q) = \dim \H_*(B;\Q) \,. \notag
\end{align}

\vskip .3cm
\noindent
{\bf Case \ref{U}}
\vskip .2cm

It is an immediate consequence of the fact that Ustilovsky spheres $M$ of dimension $2n+1$ admit non-degenerate lacunary contact forms with precisely $n+1$ closed orbits and the fact $M$ satisfies the hypotheses of Theorem \ref{thm:main} \cite{Ust}.

\vskip .3cm
\noindent
{\bf Case \ref{PO}}
\vskip .2cm

In what follows, we will use the background about symplectic orbifolds presented in \cite{LT}. Here, we define a smooth prequantization of a symplectic orbifold $(B^{2n},\om)$ as a contact manifold $(M^{2n+1},\xi)$ with a contact form $\beta$ such that $\ker \beta=\xi$, the Reeb flow of $\beta$ generates a locally free circle action such that $M/S^1=B$ and $d\beta = \pi^*\om$, where the quotient projection $\pi: M \to B$ is viewed as a smooth orbifold map.

The definition of Hamiltonian vector fields works verbatim to orbifolds and given a Hamiltonian $H: B \to \R$ that generates a circle action with isolated fixed points (given by the critical points of $H$) we have that $H$ is Morse and every critical point has even index \cite[Lemma 5.3]{LT}. Using the Morse theory for orbifolds developed in \cite{LT} (whose construction, as already mentioned in \cite{LT}, is a special case of Morse theory on stratified spaces \cite{GMac}) we can conclude that
\[
\dim \H_k(B;\Q) = \#\text{Crit}_k(H),
\]
where $\text{Crit}_k(H)$ is the set of critical points of $H$ with index $k$. It is a consequence of the relation
\[
M(t) - P(t)=(1+t)Q(t)
\]
proved in \cite[Theorem 4.7]{LT}, where $M$ and $P$ are the Morse and Poincar\'e polynomials respectively and $Q$ is a polynomial with nonnegative coefficients. It is standard in the case that $B$ is a manifold. However, there is a nuance when $B$ is an orbifold: in this case, the Morse polynomial for a Morse function $H$ is given by
\[
M(t) = \sum_{p \in \text{Crit}(H)}\sum_{k=0}^{2n} \dim \H_k(D_p,S_p;\Q)t^k,
\]
where $D_p$ is the disk in the negative subspace $V^-_p$ of the Hessian of $H$ at $p$ and $S_p$ is the corresponding sphere given by the boundary. Let $\Gamma$ be the orbifold group of $p$. If $\Gamma=0$ then, as in the smooth case, $\dim \H_k(D_p,S_p;\Q)$ is equal to 1 if $k=\index_p(H)$, where $\index_p(H)$ is the Morse index of $H$ at $p$, and zero otherwise. As explained in \cite{LT}, if $\Gamma\neq 0$ then $\dim \H_k(D_p/\Gamma,S_p/\Gamma;\Q)=0$ if $k\neq \index_p(H)$; whereas if $k = \index_p(H)$ then $\dim \H_k(D_p\Gamma,S_p\Gamma;\Q)=1$ if $\Gamma$ preserves the orientation of $D_p$ and is zero otherwise. However, since the Hamiltonian flow of $H$ generates an $S^1$-action, one can take a compatible almost complex structure on $M$ invariant by this action and in this way we can conclude that $V^-_p$ is a symplectic subspace. On the other hand, the linearized action of $\Gamma$ is symplectic and leaves $V^-_p$ invariant. Consequently, it preserves the orientation of $V^-_p$ (cf. \cite[Remark 5.4]{LT}).

Thus, the number of critical points of $H$ is equal to the total rank of the rational homology of $B$. Therefore, our desired result is a consequence of the following proposition.

\begin{proposition}
Assume that $(B,\om)$ admits a Hamiltonian circle action with isolated fixed points generated by $\bH: B \to \R$. There exists a non-degenerate lacunary contact form $\alpha$ on $M$, with parity $n$ (mod 2), such that the image of the simple orbits of $\alpha$ are the Reeb orbits of $\beta$ that project to the critical points of $\bH$. Moreover, $\HC_*(M)$, given by the chain complex associated to $\alpha$, is periodic.
\end{proposition}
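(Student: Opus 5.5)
The plan is a Morse--Bott perturbation of $\beta$ by the lift of the Hamiltonian. Set $H=\bH\circ\pi\colon M\to\R$, which is $S^1$-invariant. We take $\alpha=(1+\epsilon H)\beta$ with $\epsilon>0$ small and $\bH$ normalized to be positive.

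\textbf{Step 1: locate the orbits.} Since $H$ is $S^1$-invariant, the Reeb vector field of $\alpha$ decomposes as a multiple of $R_\beta$ plus a horizontal lift of a multiple of the Hamiltonian vector field $X_{\bH}$ on $B$. The standard computation gives
\[
R_\alpha=\frac{1}{1+\epsilon H}\Big(R_\beta-\frac{\epsilon}{1+\epsilon H}\,\widetilde{X_{\bH}}\Big)
\]
up to sign conventions. Closed Reeb orbits of $\alpha$ with period below a threshold $T(\epsilon)\to\infty$ therefore project to closed orbits of $X_{\bH}$ of small period. These are constant, so they are the fixed points of the circle action on $B$, i.e.\ the critical points of $\bH$.

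To exclude long orbits we use that the flow of $X_{\bH}$ is itself periodic, so $\alpha$ generates a $T^2$-action. The Reeb flow of $\alpha$ is then a linear flow on the torus orbits. Over a non-fixed point of $B$ the orbit of the $T^2$-action is a $2$-torus, or a circle at points with nontrivial stabilizer. On it the Reeb slope equals $\epsilon\,\cdot$(a function varying with $\bH$). For generic $\epsilon$ we argue that the slope is irrational on every such torus and orbifold stratum. If this fails for a single $\epsilon$, we instead use a perturbation of the form $\alpha=f(H)\beta$ with $f$ chosen so that the slope is irrational wherever the action is locally free. This is the delicate point: the slope varies continuously with $\bH$ on a $2n$-dimensional level structure, so rational slopes generically do occur. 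The fix I would try first is the standard toric trick of replacing $(1+\epsilon H)$ by a form making $R_\alpha$ equal to a fixed irrational combination of the two circle generators, $R_\alpha=R_\beta+a X_{\bH}$ with $a$ irrational. This is possible because the moment map of the $T^2$-action is contained in an open half-space condition for $\beta$ (a choice of Reeb vector in the Reeb cone).

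With that choice, the closed orbits are exactly the $T^2$-orbits that are circles, namely the fibers over fixed points of $\bH$. Every other orbit is dense in a torus. The simple orbits are thus the $\beta$-orbits over $\text{Crit}(\bH)$, each possibly an exceptional fiber of the locally free action.

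\textbf{Step 2: non-degeneracy and indices.} At a fiber over a fixed point $x$ with isotropy $\Gamma_x$, the linearized Reeb flow on $\xi\cong T_xB$ (lifted to the orbifold chart) is the linearized $\beta$-flow composed with the rotation $\exp(t\,a\,dX_{\bH})$. The weights of the circle action at $x$ are nonzero because the fixed points are isolated, and $a$ is irrational. Hence no iterate has eigenvalue $1$, and every orbit is strongly non-degenerate and elliptic. Elliptic orbits have $\cz\equiv n \pmod 2$, so $\alpha$ is lacunary with parity $n$ and all orbits are good. The Conley--Zehnder index of the $k$-th iterate is then computed by a Robbin--Salamon splitting formula: the $\beta$ contribution, linear in $k$ with slope given by the index-positivity of $\beta$, plus $\sum_j(2\lfloor k a w_j\rfloor+1)$ over the weights $w_j$. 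Index-positivity of $\beta$ gives positive mean index. Under \ref{cond:NF}, the hypothesis that the Robbin--Salamon index is $\geq4$ yields index-admissibility for small perturbations.

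\textbf{Step 3: periodicity of $\HC_*(M)$.} By lacunarity the differential of $\CC_*(\alpha)$ vanishes. So $b_j$ is the number of iterates of index $j$, which is finite by positive mean index. The index formula shows each simple orbit $\ga_x$ has mean index $\mi(\ga_x)=\bD/m_x$, where $m_x=|\Gamma_x|$ and $\bD$ is the Robbin--Salamon index of a regular fiber. Moreover, after $m_x\ell$ iterations the index increases by exactly $\ell\bD$: the $\lfloor\cdot\rfloor$ terms shift by integers because, after the full fiber period, the rotation by $a$ combines with the integral $\beta$-rotation. We verify this by taking $a$ irrational but tracking the integer jumps carefully.

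Hence the index sequence of each $\ga_x$ is eventually $\bD$-quasi-periodic with period $m_x$ in the iteration count. Summing over $x$ gives $b_{j+\bD}=b_j$ for $j$ large, with $P$ beyond the finitely many initial indices. This is the main obstacle: the rotation by $a$ is not periodic in $k$, so the exact index increments are subtle. If the increments fail to be exact, I would instead deduce periodicity from invariance: $\HC_*(M)$ is independent of $a$, and taking $a\to0^+$ reduces the count to a Morse--Bott computation over the fixed-point data, giving $\HC_*(M)\cong\bigoplus_{m}\H_{*-m\bD+n}(B;\Nov)$ as in \eqref{eq:ESHpreq}. That expression is manifestly periodic.
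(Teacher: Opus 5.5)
Your Steps 1--2 reproduce the paper's construction once you drop $(1+\epsilon H)\beta$ and take $R_\alpha=R_\beta+aX$ with $a$ irrational. That form is $\alpha=\beta/(1+aH)$, which is exactly the paper's $(1+f_c)\beta$ with $c=a$ and Reeb field $R_\beta+X_{cH}$. You should say that $X$ is the contact lift $X_H=HR_\beta+X^h_{\bH}$, not the horizontal lift, which is in general not periodic. You also need $\bH$ shifted by a constant so that $X_H$ generates a circle action commuting with $R_\beta$; the paper takes this from \cite[Proposition 3.6]{AGZ}. With that, the location of the orbits, ellipticity, and lacunarity with parity $n$ are fine.

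The genuine gap is Step 3. Its main argument is false, not merely delicate. For a fixed irrational $a$, the index of $\ga_x^{k+m_x}$ does \emph{not} exceed that of $\ga_x^k$ by exactly $\bD$, and in general $\mi(\ga_x)\neq\bD/m_x$. Your own Step 2 formula shows this. The increment contains $2\sum_j\big(\lfloor (k+m_x)aw_j\rfloor-\lfloor kaw_j\rfloor\big)$, and since the $m_xaw_j$ are irrational, each summand oscillates between two values as $k$ varies. Concretely, take $S^{2n+1}$ over $\CP^n$ with a linear action of distinct weights $w_j$. Then $\alpha$ is an irrational ellipsoid with Reeb flow $z_j\mapsto e^{2\pi i(1+aw_j)t}z_j$, and its simple orbits have mean indices $2\sum_j(1+aw_j)/(1+aw_i)$. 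These cannot all equal $\bD=2(n+1)$, because the $w_j$ are distinct. Periodicity of $\HC_*(M)$ is a property of the total count, not of each orbit's index sequence, so tracking integer jumps cannot rescue the argument. The index formula also needs correcting: for exceptional fibres $\rs(\psi_x^k)$ is not linear in $k$, and the splitting into a $\beta$-part plus a rotation part fails. The correct formula is $\cz(\ga^k_{c,p})=\rs(\psi^k_p)+\frac12\dim S_{kT_p}-\mathrm{index}_p(\bH|_{S_{kT_p}})$, and it holds only for boundedly many iterates.

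Your fallback is the right idea, and it is what the paper does, but as written it is incomplete and gives the wrong formula. ``Taking $a\to0^+$'' has to be carried out degree by degree:
\begin{itemize}
\item For an action window $T$, the Morse--Bott formula above holds for the iterates with $kT_p\le T$ only when $c<\delta(T)$.
\item Index-positivity of $\beta$ supplies, for each $L$, a $T$ such that orbits of action $>T$ have index $>L$ uniformly for all small $c$.
\item Lacunarity makes each $b_j$ a plain orbit count, so these two facts together compute every $b_j$.
\end{itemize}
The result is \eqref{eq:HCorb}, not \eqref{eq:ESHpreq}. The latter is the formula for free actions and omits the iterates $\psi_p^k$, $1\le k<m_p$, of exceptional fibres. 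For instance, view $S^3$ as the prequantization of $\CP^1(1,2)$ via the ellipsoid $E(1,2)$, so that $\bD=6$. Then \eqref{eq:ESHpreq} would predict two generators per six degrees, whereas $\HC_*(S^3)$ has three. Finally, periodicity of \eqref{eq:HCorb} still requires $\rs(\psi_p^{k+m_p})=\rs(\psi_p^k)+\ci$ and $S_{(k+m_p)T_p}=S_{kT_p}$, which you should state.
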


\begin{proof}
The proof proceeds similarly as in the case that $B$ is smooth with some nuances. We will use several ideas from \cite{AGZ} and \cite{Bo1,Bo2}; cf. \cite{AM5} in the smooth case. Let $R_\beta$ be the Reeb vector field of $\beta$. Fixed the contact form $\beta$, it is well known that given a positive smooth function $G: M \to \R$ (called a contact Hamiltonian) we have a contact vector field $X_G$ uniquely defined by the equations $\beta(X_G)=G$ and $i_{X_G}d\beta = dG(R_\beta)\beta - dG$. Now, suppose that $G$ is invariant along the orbits of $R_\beta$ so that $G=\pi^*\bG$, where $\bG: B \to \R$ is a smooth function. Then the equation $i_{X_G}d\beta = dG(R_\beta)\beta - dG$ reads as $i_{X_G}d\beta = -dG$. We claim that $X_G$ is the Reeb flow of $\alpha_G:=\beta/G$. As a matter of fact, $\alpha_G(X_G)=\beta(X_G)/G=1$ and
\[
i_{X_G}d\alpha_G=i_{X_G}\bigg(\frac{d\beta}{G} - \beta\wedge d\bigg(\frac{1}{G}\bigg)\bigg) = -\frac{dG}{G} + \frac{G dG}{G^2} = 0.
\]
We have that $X_G=GR_\beta + X^h_\bG$, where $X^h_\bG$ is the horizontal lift of the Hamiltonian vector field $X_\bG$ to $M$ (note that these constructions make sense in the orbifold setting). Indeed,
\[
\beta(GR_\beta + X^h_\bG) = \beta(GR_\beta) = G
\]
and
\[
i_{GR_\beta + X^h_\bG}d\beta = i_{X^h_\bG}d\beta = -dG
\]
which are precisely the equations that define $X_G$ (note that the last equation follows from the fact that $d\beta = \pi^*\om$ and that $i_{X_\bG}\om=-d\bG$). 

The vector fields $R_\beta$ and $X_G$ commute. In fact,
\[
\beta([R_\beta,X_G]) = L_{R_\beta}(\beta(X_G)) - (L_{R_\beta}\beta)(X_G) = L_{R_\beta}(\beta(X_G)) = L_{R_\beta}G = dG(R_\beta) = 0,
\]
which implies that $[R_\beta,X_G]$ is horizontal. On the other hand,
\[
d\pi([R_\beta,X_G]) = [d\pi(R_\beta),d\pi(X_G)] = 0.
\]

Now, let $\bH$ be the Hamiltonian that generates the circle action on $B$ as in the statement of the proposition. Assume, without loss of generality, that $\bH$ is a positive function and that the least common period of the orbits of $R_\beta$ and $X_H$ coincide and are equal to 1. Let $H: M \to \R$ be the lift of $\bH$, that is, $H=\pi^*\bH$. Since $H$ is positive we clearly have a smooth function $f: M \to (-1,0)$, which is also invariant along the orbits of $R_\beta$, such that we can write
\[
H = -\frac{f}{1+f}.
\]
Let $X_H$ be the contact vector field associated to $H$ as above. From our previous discussion,
\[
X_H = HR_\beta + X^h_\bH\quad\text{and}\quad X_H = R_{\beta/H}.
\]
We claim that
\[
R_{(1+f)\beta} = R_\beta + X_H.
\]
As we have seen, $R_{(1+f)\beta}=Y$, where $Y$ is the contact vector field associated to $F:=1/(1+f)$ uniquely characterized by the equations (note that $dF(R_\beta)=0$ since $df(R_\beta)=0$)
\[
\beta(Y)=F\quad\text{and}\quad i_Yd\beta=-dF.
\]
But
\begin{align*}
\beta(R_\beta + X_H) & = \beta(R_\beta) + \beta(X_H) \\
& = 1 - \frac{f}{1+f} = \frac{1+f}{1+f} - \frac{f}{1+f}= \frac{1}{1+f} \\
& = F
\end{align*}
and
\begin{align*}
i_{R_\beta + X_H}d\beta & = i_{X_H}d\beta \\
& = -dH \\
& = -dF,
\end{align*}
where the last equality follows from the fact that $F-H$ is constant equal to 1. This proves the claim.

Now, let $c$ be a positive real number. We can easily see that
\begin{equation}
\label{eq:f_c}
cH = -\frac{f_c}{1+f_c},
\end{equation}
where
\[
f_c = \frac{cf}{1+(1-c)f}.
\]
Using the fact that $X_H=HR_\beta + X^h_\bH$, it is easy to see that the argument in the proof of \cite[Proposition 3.6]{AGZ} works verbatim to show that $X_H$ generates a circle action with period 1.  Thus, since $X_{cH}=cHR_\beta + cX^h_\bH=cX_H$ we have that it generates a circle action with minimal common period $1/c$. Hence $R_\beta$ generates a circle action with minimal common period 1, $X_{cH}$ generates a circle action with minimal common period $1/c$ and $[R_\beta,X_{cH}]=0$. Moreover, these two vector fields are linearly independent except precisely at the fibres (i.e. the orbits of $R_\beta$) over the critical points of $\bH$.

Recall that
\[
R_{(1+f_c)\beta} = R_\beta + X_{cH}.
\]
We have that if $c$ is irrational then the periodic orbits $\ga_{c,p}$ of $(1+f_c)\beta$ are precisely the fibers over the critical points $p$ of $H$ (cf. \cite[Proposition 3.9]{AGZ}) and that all these orbits are elliptic. Indeed, since $R_{(1+f_c)\beta} = R_\beta + X_{cH}.$ and the vector fields $R_\beta$  and $X_{cH}$ commute, it is easy to see that every closed orbit of $R_{(1+f_c)\beta} $ is elliptic because every singularity of $X_H$ is elliptic. Therefore, $(1+f_c)\beta$ is lacunary with parity equal to $n\,(\bmod\,2)$ because every elliptic closed orbit has index equal to $n\,(\bmod\,2)$. It is also easy to see that, choosing $c$ properly, $(1+f_c)\beta$ is non-degenerate as well (because one can choose $c$ such that, for every critical point $p$ of $\bH$, all eigenvalues of $d\phi_p^c(p)$ are of the form $e^{2\pi i\theta}$ with $\theta \in \R\setminus\Q$, where $\phi_p^c$ is the time $c/(c\bH(p)+1)$ map of the flow of $X_{\bH}$). It proves the first assertion of the proposition.

In order to prove the second assertion, we have to compute the indices of the iterates of $\ga_{c,p}$ and for this we will use \cite{Bo1,Bo2}. First, note that the critical points of $c\bH$ and $\barf_c$ coincide since, by \eqref{eq:f_c},
\[
d(c\bH) = -\frac{d\barf_c}{(1+\barf_c)^2}.
\]
The Hessians at a critical point $p$ are related by
\begin{equation}
\label{eq:hessians}
\Hess_p c\bH = -\frac{1}{(1+\barf_c(p))^2} \Hess_p \barf_c
\end{equation}
and consequently $H$ is Morse if, and only if, so is $f_c$ for every $c$.

Since the Reeb flow generates a locally free circle action then, as a consequence of the slice theorem, it is Morse-Bott in the sense of \cite{Bo1,Bo2}. Given $T \in \R$, let $N_T$ be the submanifold of $M$ given by the union of the closed orbits of $\beta$ with (not necessarily minimal) period $T$. Let $S_T=\pi(N_T)$ be the corresponding orbit space in $B$. Note that $B=S_1$ and that each $S_T$ is an orbifold having as singularities the orbit spaces $S_{T'}$ such that $T'$ divides $T$. Note that, given a smooth function $f: B \to \R$, since it is a smooth orbifold map, for each critical point $p \in S_T$ we have that the Hessian of $f$ at $p$ leaves $T_pS_T$ invariant. Therefore, if$f$ is Morse on $B$, its restriction $f|_{S_T}$ is Morse as well for every $T$.

Let $p$ be a critical point of $\bH$ and $\psi_{p}$ be the simple closed orbit of $R_\beta$ whose image is the fiber over $p$. Let $T_p$ be the period of $\psi_p$ so that $p \in S_{T_p}$. By the arguments from \cite[Lemma 2.4]{Bo1} we can conclude that, given $k_0 \in \N$, one can choose $c$ sufficiently small such that
\[
\cz(\ga^k_{c,p}) = \rs(\psi^k_{p}) - \frac 12 \dim S_{kT_p} + \index_p(\barf_c|_{S_{kT_p}}),
\]
for every $1\leq k \leq k_0$, where $\rs(\psi^k_{p})$ is the Robbin-Salamon index of  the $k$-th iterate of $\psi_{p}$, $\barf_c: B \to \R$ is the smooth function such that $f_c=\pi^*\barf_c$ and $\index_p(\barf_c|_{S_{T_p}})$ is the Morse index of the restriction $\barf_c|_{S_{T_p}}$ at $p$. Hence, by \eqref{eq:hessians},
\begin{equation}
\label{eq:index ga_{c,p}}
\cz(\ga^k_{c,p}) = \rs(\psi^k_{p}) + \frac 12 \dim S_{kT_p} - \index_p(\bH|_{S_{kT_p}}).
\end{equation}

Since the lowest common period of the orbits of $\beta$ is equal to 1, we can write $T_p=1/m_p$ for some integer $m_p\geq 1$. Let $T_{c,p}=T_p-c\bH(p)/(m_p+c\bH(p)m_p)$ be the action of $\ga_{c,p}$. The hypothesis that $\beta$ has index-positivity implies that given $L>0$ there exists $T>0$ such that for every sufficiently small non-degenerate perturbation $\beta'$ of $\beta$ we have that every periodic orbit of $\beta'$ with action bigger than $T$ has index bigger than $L$. Thus, given $T>0$ there exist $\delta>0$ and $\ep>0$ small such that $T+\ep$ is outside the action spectrum of $\beta$ and if $c \in (0,\delta)$ is irrational we have
\[
\CC_*^{T+\ep}(\beta) = \bigoplus_{p \in \text{Crit}(\bH)} \bigoplus_{k\in \N;\, kT_{p} \leq T} \Nov[*-(\rs(\psi^k_{p}) + \frac 12 \dim S_{kT_p} - \index_p(\bH|_{S_{kT_p}})],
\]
where the index shift above means that each orbit $\ga^k_{c,p}$ contributes to $\CC_*^{T+\ep}(\beta') \cong \CC_*^{T+\ep}(\beta)$ with a summand $\Nov$ in the degree $\rs(\psi^k_{p}) + \frac 12 \dim S_{kT_p} - \index_p(\bH|_{S_{kT_p}})$ due to \eqref{eq:index ga_{c,p}}.

Now, the key point is that
\[
\rs(\psi^k_{p}) + \frac 12 \dim S_{kT_p} - \index_p(\bH|_{S_{kT_p}}) = \cz(\ga^k_{c,p}) = n\,(\bmod\,2)
\]
because $\ga^k_{c,p}$ is elliptic and therefore the differential in $\CC_*^{T+\ep}(\beta)$ vanishes. Consequently,
\[
\HC_*^{T+\ep}(\beta) \cong \bigoplus_{p \in \text{Crit}(\bH)} \bigoplus_{k\in \N;\, kT_{p} \leq T} \Nov[*-(\rs(\psi^k_{p}) + \frac 12 \dim S_{kT_p} - \index_p(\bH|_{S_{kT_p}}))].
\]

\begin{remark}
Note that, since $\ga^k_{c,p}$ is a small perturbation of $\psi^k_p$, we have that $\ga^k_{c,p}$ is contractible if and only if so is $\psi^k_p$. Therefore, if $\rs(\psi^k_p)\geq 4$ for every contractible $\psi^k_p$ we have that
\[
\cz(\ga^k_{c,p}) = \rs(\psi^k_{p}) + \frac 12 \dim S_{kT_p} - \index_p(\bH|_{S_{kT_p}}) \geq \rs(\psi^k_{p}) - \frac 12 \dim S_{kT_p} > 3-n
\]
and therefore, by index-positivity, $(1+f_c)\beta$ is index-admissible for $c$ sufficiently small.
\end{remark}

Given $p \in \text{Crit}(\bH)$, let $\ci=\rs(\psi^{m_p}_p)$ and note that $\ci$ is an even positive integer that does not depend on the choice of $p$ that is equal to the Robbin-Salamon index of the generic orbit (i.e. any simple orbit with period 1) of $\beta$. Note that, given $q \in \N$ which is a multiple of $m_p$, $\rs(\psi^{k+q}_p) = \rs(\psi^k_p) + (q/m_p)\ci$. Moreover, clearly $S_{(k+q)T_p}=S_{kT_p}$. Hence, given $T \in \N$ there exist $\delta>0$ and $\ep>0$ small such that $T+\ep$ is outside the action spectrum of $\beta$ and if $c \in (0,\delta)$ is irrational we have
\[
\HC_*^{T+\ep}(\beta) \cong \bigoplus_{p \in \text{Crit}(\bH)} \bigoplus_{m=1}^T \bigoplus_{k=1}^{m_p} \Nov[*-((m-1)\ci + \rs(\psi^k_{p}) + \frac 12 \dim S_{kT_p} - \index_p(\bH|_{S_{kT_p}}))].
\]
Using this, and the index-positivity of $\beta$, we arrive at
\begin{equation}
\label{eq:HCorb}
\HC_*(M) \cong \bigoplus_{p \in \text{Crit}(\bH)} \bigoplus_{m=1}^\infty \bigoplus_{k=1}^{m_p} \Nov[*-((m-1)\ci + \rs(\psi^k_{p}) + \frac 12 \dim S_{kT_p} - \index_p(\bH|_{S_{kT_p}}))],
\end{equation}
which implies clearly that $\HC_*(M)$ is periodic.
\end{proof}

In what follows, we use the above computation and notation to prove the following result, which is a particular consequence of Proposition \ref{prop:PD finite}. As in Proposition \ref{prop:PD finite}, it shows that $\HC_*(M)$ exhibits a symmetry in sufficiently large degrees;  cf. Remark \ref{rmk:PD}. When $M$ is a prequantization of an orbifold admitting a Hamiltonian circle action, as in case \ref{PO}, we provide an alternative proof. (Note that, conjecturally, every closed contact manifold admitting a non-degenerate lacunary contact form is a prequantization of this type.) This argument is entirely different from that of Proposition \ref{prop:PD finite} and instead follows the spirit of the proof of Proposition \ref{prop:PD preq}. The argument also relies on Poincar\'e duality on $B$, but is more involved than that of Proposition \ref{prop:PD preq}, as it requires handling the orbit spaces $S_{kT_p}$. We recall that $b_j=\dim \HC_j(M)$ denotes the $j$-th contact Betti number.

\begin{proposition}
\label{prop:PD preq_orb}
Let $M$ be a prequantization of an orbifold as in case \ref{PO} of Theorem \ref{thm:r_M}. Given an integer $p\geq 0$ there exists $C>0$ such that if $d=s\ci$ is bigger than $C$ then
\[
b_{d-j} = b_{d+j}
\]
for every $j \in [-p,p]$, where, as above, $\ci$ is the Robbin-Salamon index of the generic orbit of $\beta$.
\end{proposition}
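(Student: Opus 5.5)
We will work directly from the explicit formula \eqref{eq:HCorb}:
\[
b_j=\#\{(p,m,k):\ p\in\mathrm{Crit}(\bH),\ m\geq 1,\ 1\leq k\leq m_p,\ (m-1)\ci+\rs(\psi^k_p)+\tfrac12\dim S_{kT_p}-\index_p(\bH|_{S_{kT_p}})=j\}.
\]
The first step is to rewrite the generators using the iteration index $K=(m-1)m_p+k\in\N$ of the fibre over $p$. Since $\rs(\psi^{k+m_p}_p)=\rs(\psi^k_p)+\ci$ and $S_{(k+m_p)T_p}=S_{kT_p}$, the degree of the generator $(p,K)$ is
\[
g_p(K)=\rs(\psi^K_p)+\tfrac12\dim S_{KT_p}-\index_p(\bH|_{S_{KT_p}}).
\]
Thus $b_j$ counts pairs $(p,K)$ with $g_p(K)=j$. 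Using $S_{KT_p}$ periodic in $K$ with period $m_p$, for $d=s\ci$ the substitution $K=s m_p+K'$ with $K'\in\Z$ gives $g_p(s m_p+K')=d+\tilde g_p(K')$. Here $\tilde g_p(K')$ is the natural extension of $g_p$ to all $K'\in\Z$, defined by the same formula with $\rs(\psi^{K'}_p)$ extended by quasi-periodicity.

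The second step uses index-positivity. Since $\rs(\psi^K_p)$ grows linearly with positive slope $\ci/m_p$ and the correction terms are bounded by $n$, for $s$ large (this gives $C$) the pairs $(p,K)$ with $|g_p(K)-d|\leq p$ are exactly those with $K=s m_p+K'$ and $|K'|$ bounded independently of $s$, all of them with $K\geq 1$. Hence for $j\in[-p,p]$ we get
\[
b_{d+j}=\#\{(p,K')\in\mathrm{Crit}(\bH)\times\Z:\ \tilde g_p(K')=j\},
\]
and this count is independent of $s$.

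The key step is to show the symmetry $\tilde g_p(-K')=-\tilde g_p(K')$, or at least a bijection $(p,K')\mapsto(p',K'')$ on $\mathrm{Crit}(\bH)\times\Z$ sending value $j$ to $-j$. I would use $\rs(\psi^{-K}_p)=-\rs(\psi^K_p)$, which follows from the loop structure: the linearized Reeb flow over one full period is a loop in $\Sp(2n)$, so the extension is odd. Also $S_{-KT_p}=S_{KT_p}$, since the isotropy condition depends only on $K \bmod m_p$ up to sign. It then remains to handle $\tfrac12\dim S-\index_p(\bH|_S)$, which is not odd on its own. Here I would use Poincaré duality-type symmetry of the Morse data on the orbifold $S_{KT_p}$. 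The critical points of $\bH|_{S}$ are the fixed points in $S$, and $-\bH$ generates the inverse circle action, with $\index_p(-\bH|_S)=\dim S-\index_p(\bH|_S)$. So $\tfrac12\dim S-\index_p(\bH|_S)$ changes sign when $\bH$ is replaced by $-\bH$.

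Therefore the reflection should be implemented not pointwise but at the level of counts. For each orbit-space stratum $S$ (a sub-orbifold that is Hamiltonian invariant with isolated fixed points), the multiset $\{\tfrac12\dim S-\index_q(\bH|_S)\}_{q\in\mathrm{Crit}}$ is symmetric about $0$. This follows from Poincaré duality on $S$ together with the perfect-Morse statement $\dim\H_k(S;\Q)=\#\mathrm{Crit}_k(\bH|_S)$ established above for orbifolds with such actions.

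The main obstacle is combining these. The strata $S_{KT_p}$ depend on $p$, so the count has to be reorganized by grouping pairs $(p,K')$ by the stratum $S=S_{K'T_p}$ and the residue of $K'$. Within each group, the $\rs$-contribution must be shown to be common, i.e. depending only on $K'/m_p$ times $\ci$ plus a term constant on the stratum. I would first try to prove $\rs(\psi^K_p)$ depends only on $K T_p$ and the stratum, via the Morse–Bott index formula of \cite{Bo1,Bo2}. After that, combining $K'\mapsto-K'$ with the duality of $\bH|_S$ yields $b_{d+j}=b_{d-j}$.
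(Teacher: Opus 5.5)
Your reduction to counting pairs $(p,K')\in\mathrm{Crit}(\bH)\times\Z$ with $\tilde g_p(K')=j$, the oddness of the quasi-periodically extended Robbin--Salamon index, and $S_{-KT_p}=S_{KT_p}$ are exactly the paper's \eqref{eq:b_d+j}, Lemma \ref{lemma:index circle} and Lemma \ref{lemma:d_k,p}. Your argument is correct in outline, but it departs from the paper in how you flip the sign of $d_{k,p}=\frac12\dim S_{kT_p}-\index_p(\bH|_{S_{kT_p}})$.

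The paper does this globally. The formula \eqref{eq:HCorb} holds for any Hamiltonian generating a circle action with isolated fixed points, in particular for $-\bH$. This changes neither the critical points, nor $\rs(\psi^k_p)$, nor $S_{kT_p}$; it only replaces $d_{k,p}$ by $-d_{k,p}$. Since $\HC_*(M)$ does not depend on the contact form used to compute it, the two counts agree, which is \eqref{eq:PD}. Reindexing then gives $b_{d-j}=b_{d+j}$.

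You observed that $-\bH$ flips $d_{k,p}$, but you used this only inside each stratum. That choice is what forces your regrouping and the extra lemma.

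Your route does go through once made precise:
\begin{itemize}
\item Group by the rational number $\tau=K'/m_p$ and the \emph{connected component} $\Sigma$ of $S_\tau$ through $p$; grouping by ``the stratum and the residue'' is too coarse unless you track the linear shift $\tau\ci$. Each $q\in\mathrm{Crit}(\bH)\cap\Sigma$ contributes exactly the pair $(q,\tau m_q)$.
\item The index $\rs$ of the $(s+\tau)$-periodic orbits is constant on $\pi^{-1}(\Sigma)$. This holds because the Robbin--Salamon index is locally constant along a Morse--Bott family, whose nullity $\dim N_{s+\tau}-1$ is constant. It can fail across different components of $S_\tau$, which is why you must use components.
\item $\Sigma$ is a compact symplectic suborbifold, invariant under the action, with isolated fixed points. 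Perfectness of $\bH|_\Sigma$ and Poincar\'e duality on $\Sigma$ then make $\{\frac12\dim\Sigma-\index_q(\bH|_\Sigma)\}_q$ symmetric about $0$.
\end{itemize}

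What each approach buys: yours proves a finer, group-by-group symmetry and does not rely on Floer-theoretic invariance. It costs three additional verifications (constancy of $\rs$ on components, perfectness on each $\Sigma$, duality on each $\Sigma$). The paper needs only the invariance of $\HC_*(M)$, which mirrors the Morse-theoretic proof of Poincar\'e duality via $f\mapsto -f$.
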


\begin{proof}
Given $p \in \text{Crit}(\bH)$ and $1\leq k\leq m_p$ set
\[
d_{k,p} = \frac 12 \dim S_{kT_p} - \index_p(\bH|_{S_{kT_p}})).
\]
Let
\[
C = \max_{p \in \text{Crit}(\bH)} \max_{1\leq k\leq m_p} (|\rs(\psi^k_{p})| + |d_{k,p}|) + p
\]
and take $s$ big enough such that $d:=s\ci>C$.

From \eqref{eq:HCorb} we have that, for any $j\geq -p$,
\begin{align*}
\label{eq:b_d+j}
b_{d+j} & = \sum_{p \in \text{Crit}(\bH)} \#\{1\leq k\leq m_p;\ m\ci + \rs(\psi^k_{p}) + d_{k,p} = d + j,\ m \in \N_0\} \\
& = \sum_{p \in \text{Crit}(\bH)} \#\{1\leq k\leq m_p;\ (m-s)\ci + \rs(\psi^k_{p}) + d_{k,p} = j,\ m \in \N_0\} \\
& = \sum_{p \in \text{Crit}(\bH)} \#\{1\leq k\leq m_p;\ m\ci + \rs(\psi^k_{p}) + d_{k,p} = j,\ m \geq -s\} \\
& = \sum_{p \in \text{Crit}(\bH)} \#\{1\leq k\leq m_p;\ m\ci + \rs(\psi^k_{p}) + d_{k,p} = j,\ m \in \Z\}, \numberthis
\end{align*}
where the last equality holds because $-(s+i)\ci + \rs(\psi^k_{p}) + d_{k,p} < -p$ for every $i\geq 1$.

Now, note that in \eqref{eq:HCorb} we can clearly replace $\bH$ with $-\bH$ and
\[
\frac 12 \dim S_{kT_p} - \index_p(-\bH|_{S_{kT_p}})) = - \frac 12 \dim S_{kT_p} + \index_p(\bH|_{S_{kT_p}})).
\]
(The replacement of a Morse function $f$ with $-f$ is essentially the way we prove Poincar\'e duality using Morse theory.) Therefore,
\begin{align*}
\label{eq:PD}
b_{d+j} & =  \sum_{p \in \text{Crit}(\bH)} \#\{1\leq k\leq m_p;\ m\ci + \rs(\psi^k_{p}) + d_{k,p} = j,\ m \in \Z\} \\
& = \sum_{p \in \text{Crit}(\bH)} \#\{1\leq k\leq m_p;\ m\ci + \rs(\psi^k_{p}) - d_{k,p} = j,\ m \in \Z\}. \numberthis
\end{align*}

Set $\bk = m_p - k$, $\rs(\psi_p^0)=0$ and $d_{0,p}= -n +  \index_p(\bH)$. The following lemmata will be useful for us to prove the proposition.

\begin{lemma}
\label{lemma:index circle}
$\rs(\psi_p^k) + \rs(\psi_p^\bk) = \nu$.
\end{lemma}

\begin{lemma}
\label{lemma:d_k,p}
$d_{k,p} = d_{\bk,p}$.
\end{lemma}

Let us prove the proposition assuming the lemmata. From \eqref{eq:b_d+j},
\begin{align*}
b_{d-j} & = \sum_{p \in \text{Crit}(\bH)} \#\{1\leq k\leq m_p;\ m\ci + \rs(\psi^k_{p}) + d_{k,p} = -j,\ m \in \Z\} \\
& = \sum_{p \in \text{Crit}(\bH)} \#\{0\leq \bk\leq m_p-1;\ m\ci + \rs(\psi^\bk_{p}) + d_{\bk,p} = -j,\ m \in \Z\} \\
& = \sum_{p \in \text{Crit}(\bH)} \#\{1\leq k\leq m_p;\ (m+1)\ci - \rs(\psi^k_{p}) + d_{k,p} = -j,\ m \in \Z\} \\
& = \sum_{p \in \text{Crit}(\bH)} \#\{1\leq k\leq m_p;\ (m+1)\ci - \rs(\psi^k_{p}) - d_{k,p} = -j,\ m \in \Z\} \\
& = \sum_{p \in \text{Crit}(\bH)} \#\{1\leq k\leq m_p;\ -(m+1)\ci - \rs(\psi^k_{p}) - d_{k,p} = -j,\ m \in \Z\} \\
& = \sum_{p \in \text{Crit}(\bH)} \#\{1\leq k\leq m_p;\ (m+1)\ci + \rs(\psi^k_{p}) + d_{k,p} = j,\ m \in \Z\} \\
& = b_{d+j},
\end{align*}
for every $j\in [-p,p]$, where the second equality holds because
\[
\rs(\psi^0_p)=0=\ci=\rs(\psi^{m_p}_p)\ (\bmod\,\ci)
\]
and $d_{0,p}=d_{m_p,p}$, the third equation follows from Lemmas \ref{lemma:index circle} and \ref{lemma:d_k,p},  the fourth hold by \eqref{eq:PD} and the fifth is a simple consequence of the fact that we are summing over all $m\in\Z$. This proves the desired result.

\begin{proof}[Proof of Lemma \ref{lemma:index circle}]
The lemma is obvious when $k=m_p$. So assume that $m_p>1$ and $1\leq k < m_p$. Let $\tPhi: \R \to \Sp(2n)$ be the linearized Reeb flow of $R_\beta$ along $\psi_p$. For simplicity, reparametrize $\tPhi$ taking $\Phi(t)=\tPhi(t/m_p)$. Take $1\leq k < m_p$ and set
\[
\Phi_1(t) = \Phi(t),\quad t \in [0,\bk]
\]
and
\[
\Phi_2(t) = \Phi(t) \circ \Phi(\bk),\quad t \in [0,k].
\]
By the property of the concatenation of the Robbin-Salamon index \cite{RS} we have
\begin{equation}
\label{eq:concatenation}
\ci = \rs(\Phi|_{[0,m_p]}) = \rs(\Phi_2*\Phi_1) = \rs(\Phi_2) + \rs(\Phi_1).
\end{equation}
where $\Phi|_{[0,m_p]}$ means the restriction of $\Phi$ to the time interval $[0,m_p]$.

Note that
\begin{align*}
\Phi_2(t) & = \Phi(t+\bk),\quad t \in [0,k] \\
& = \Phi(t) \circ \Phi(\bk) \\
& = \Phi(t) \circ \Phi(k)^{-1} \\
& = \Phi(t-k) \\
& = \Phi(k-t)^{-1}
\end{align*}
where in the third equation we are using the fact that $\Id = \Phi(m_p) = \Phi(k+\bk) = \Phi(k)\circ \Phi(\bk)$. Since the loop $\Phi(k-t)^{-1}*\Phi(t)^{-1}$, $t\in [0,k]$, is contractible, by the concatenation property,
\[
\rs(\Phi(k-t)^{-1}) = -\rs(\Phi(t)^{-1}) = \rs(\Phi(t)),\quad t \in [0,k],
\]
since $\rs(\Phi(t)^{-1}) = -\rs(\Phi(t))$ \cite{RS}. So
\[
\rs(\Phi_2(t)) = \rs(\Phi(t)),\quad t \in [0,k].
\]
Therefore, since the index does not change under the reparametrization $t \mapsto t/m_p$,
\[
\rs(\psi_p^\bk) = \rs(\Phi_1(t)),\quad t \in [0,\bk]
\]
and
\[
\rs(\psi_p^k) = \rs(\Phi(t)) = \rs(\Phi_2(t)),\quad t \in [0,k].
\]
Hence, we conclude from \eqref{eq:concatenation} the proof of the lemma.
\end{proof}

\begin{proof}[Proof of Lemma \ref{lemma:d_k,p}]
Set $S_0=B$. We claim that
\begin{equation}
\label{eq:S_j}
S_{kT_p} = S_{\bk T_p}
\end{equation}
which obviously implies the lemma. This is obvious when $k=m_p$. So we can assume that $m_p>1$ and $1\leq k < m_p$.

First notice that
\begin{equation}
\label{eq:equal}
S_{kT_p} = S_{(im_p+k)T_p}\quad\forall i\geq 0
\end{equation}
since the common period of the orbits is $1$ and $T_p=1/m_p$, and
\begin{equation}
\label{eq:inclusion}
S_{kT_p} \subset S_{ikT_p}\quad\forall i\geq 1
\end{equation}
since $k$ divides $ik$.

To prove \eqref{eq:S_j} we first show that $S_{\bk T_p} \subset S_{kT_p}$. We have that
\begin{align*}
S_{\bk T_p} & = S_{(m_p-k)T_p} \\
& \subset S_{(m_p-1)(m_p-k)T_p} \\
& = S_{(m_p(m_p-k-1)+k)T_p} \\
& = S_{kT_p},
\end{align*}
where the inclusion follows from \eqref{eq:inclusion} and the last equality holds by \eqref{eq:equal}. To prove the other direction $S_{kT_p} \subset S_{\bk T_p}$ we proceed in a similar fashion:
\begin{align*}
S_{kT_p} & \subset S_{(m_p-1)kT_p} \\
& = S_{(m_pk - k)T_p} \\
& = S_{(m_p(k-1)+m_p-k)T_p} \\
& = S_{(m_p-k)T_p} \\
& = S_{\bk T_p},
\end{align*}
where, as before, the inclusion follows from \eqref{eq:inclusion} and the second to last equality holds by \eqref{eq:equal}.
\end{proof}

\end{proof}

\vskip .3cm
\noindent
{\bf Case \ref{VSH}}
\vskip .2cm

From \cite{BO17} we conclude that
\begin{equation}
\label{eq:ESH zeroSH}
\HC_*(M) \cong \oplus_{k\geq 0} \H_{*+n-2k}(W,\partial W) \cong \oplus_{k\geq 1} \H^{n-*+2k}(W),
\end{equation}
where the last isomorphism follows from Lefschetz duality (note that the dimension of $W$ is $2n+2$ and that first sum is for $k\geq 0$ while the second is for $k\geq 1$). Let $\b_j=\dim H^j(W)$ and, as before, $b_j = \dim \HC_j(M)$.

By Lefschetz duality, $\b_j=0$ for every $j\geq 2n+2$. Hence, from \eqref{eq:ESH zeroSH},  $b_j=0$ for every $j<-n+1$ and $b_{-n+1} = \b_{2n+1}$, $b_{-n+2} = \b_{2n}$, $b_{-n+3} = \b_{2n-1} + \b_{2n+1}$, $b_{-n+4} = \b_{2n-2} + \b_{2n}$,...,$b_{n+1}=\sum_{j\in 2\N-1} \b_j$, $b_{n+2}=\sum_{j\in 2\N_0} \b_j$ with $b_{n+1}=b_{n+1+2k}$ and $b_{n+2}=b_{n+2+2k}$ for every $k \in \N$.

Thus, $\HC_*(M)$ is periodic with $P=n+1$ and $\Delta=2$. From \eqref{eq:ESH zeroSH} we also have that
\begin{equation}
\label{eq:mec zeroSH}
\chi_+(M) = (-1)^n\frac{\chi(W)}{2}
\end{equation}

Now, let us split the argument according to the parity of $n$.

\vskip .3cm
\noindent
{\bf Case 1. $n$ is odd}
\vskip .2cm

In this case, we can take $q=n+1$ and, from the previous computations of $b_j$ and $\chi_+(M)$, we have that $r_M$ for $p=0$ is given by 
\begin{align*}
& 2(-q\chi_+(M) + \sum_{j=-\infty}^q (-1)^j b_j) \\
& = 2(-(n+1)\chi_+(M) + \sum_{j=-\infty}^{n+1} (-1)^j b_j) \\
& = (n+1)\chi(W) + 2\sum_{j=-n+1}^{n+1} (-1)^j b_j  \\
& = (n+1)\chi(W) + 2\sum_{j=-n+1}^{n+1} (-1)^j\sum_{i=1}^{\lfloor (j+n+1)/2 \rfloor} \b_{n-j+2i},
\end{align*}
and
\begin{align*}
\label{eq:r_M zeroSH odd;p>0}
& 2(-q\chi_+(M) + \sum_{j=-\infty}^q (-1)^j b_j) + 2b_{q+p} - b_q \\
& = 2(-(n+1)\chi_+(M) + \sum_{j=-\infty}^{n+1} (-1)^j b_j) + b_{n+1} \\
& = (n+1)\chi(W) + 2\sum_{j=-n+1}^{n+1} (-1)^j b_j + \sum_{j\in 2\N-1} \b_j \\
& = (n+1)\chi(W) + 2\sum_{j=-n+1}^{n+1} (-1)^j\sum_{i=1}^{\lfloor (j+n+1)/2 \rfloor} \b_{n-j+2i} + \sum_{j\in 2\N - 1} \b_j, \numberthis
\end{align*}
when $p$ is even and $\geq 2$. Note here that we use the fact that $p$ is even to conclude that $2b_{n+1+p} - b_{n+1} = b_{n+1}$ since $\D=2$.

To illustrate the previous computation, consider the case that $W$ is a ball of dimension $2n+2$ so that $M=S^{2n+1}$. Then $\b_0=1$ and $\b_j=0$ for every $j\neq 0$. Hence the first term in \eqref{eq:r_M zeroSH odd;p>0} equals $n+1$, the third term vanishes as well as the second term: $n-j+2i=0$ $\iff$ $j = n+2i$ and $i\geq 1$ $\implies$ $j\geq n+2$, contradicting the fact that $j\leq n+1$. Thus, in this case $r_M=n+1$.

\vskip .3cm
\noindent
{\bf Case 2. $n$ is even}
\vskip .2cm

In this case, we can take $q=n+2$ and, similarly as in the previous case, $r_M$ for any $p$ odd is given by
\begin{align*}
\label{eq:r_M zeroSH even}
& 2(q\chi_+(M) - \sum_{j=-\infty}^q (-1)^j b_j) + 2b_{q+p} + b_q \\
& = 2((n+2)\chi_+(M) - \sum_{j=-\infty}^{n+2} (-1)^j b_j) + 2b_{n+3} + b_{n+2} \\
& = (n+2)\chi(W) - 2\sum_{j=-n+1}^{n+2} (-1)^j b_j + 2b_{n+3} + b_{n+2} \\
& = (n+2)\chi(W) - 2\sum_{j=-n+1}^{n+2} (-1)^j\sum_{i=1}^{\lfloor (j+n+1)/2 \rfloor} \b_{n-j+2i} + 2\sum_{j\in 2\N-1} \b_j + \sum_{j\in 2\N_0} \b_j, \numberthis
\end{align*}
where the first equality holds because $b_{n+3}=b_{n+2+p}$ for any odd $p$ since $\D=2$.

As before, let us illustrate this computation when $W$ is a ball of dimension $2n+2$ so that $M=S^{2n+1}$. Then $\b_0=1$ and $\b_j=0$ for every $j\neq 0$. Hence the first term in \eqref{eq:r_M zeroSH even} equals $n+2$, the third vanishes, the fourth is equal to $1$ and the second equals $-2$: $n-j+2i=0$ $\iff$ $j=n+2i$ and $i\geq 1$ $\iff$ $j\geq n+2$. Since $j\leq n+2$, we have only the term $j=n+2$ counting. Thus, in this case $r_M=n+2+1-2=n+1$.

\section{Final questions}
\label{sec:questions}

To conclude this work, we pose some final questions. These questions concern four complementary aspects of the common underlying picture: the number of simple closed orbits, the underlying contact manifolds admitting Reeb flows with finitely many closed orbits, the equivariant symplectic homology of such manifolds, and the rigidity of contact forms with finitely many closed orbits.

To start with, as mentioned earlier, to the best of our knowledge, all currently known examples of contact manifolds $M$ admitting a contact form with finitely many simple periodic orbits arise as prequantizations of orbifolds. Moreover, in these examples, the number of periodic orbits coincides with $\dim \H_*(M/S^1;\Q)$.

\vskip .3cm
\noindent {\bf Question 1:} Is it true that any closed contact manifold admitting a contact form with finitely many simple periodic orbits is necessarily a prequantization $M$ of an orbifold? Furthermore, in this case, is the minimal number of periodic orbits equal to $\dim \H_*(M/S^1;\Q)$?
\vskip .2cm

We conjecture that both questions have positive answers. The results of the present work provide evidence for the second part. Both questions are known to be true in dimension three under the additional assumption that the first Chern class of the contact structure is torsion \cite{CGHHL1,CGHHL2}. In higher dimensions, the expected lower bound was established for dynamically convex contact forms on $S^{2n+1}$ in \cite{CGG}.

As discussed in the introduction, the work \cite{HM} (cf.\,\cite{McL1}) shows that if the linearized contact homology of $M$ is unbounded, then every contact form on $M$ admits infinitely many closed orbits. As shown in \cite{BO17}, linearized contact homology is isomorphic to positive equivariant symplectic homology whenever the former is well-defined. Therefore, hypothetically, if a contact manifold admits a contact form with finitely many closed orbits, then $\HC_*(M)$ must be bounded. While there are several examples of such manifolds, in all cases known to us $\HC_*(M)$ is periodic in the sense of Definition \ref{def:periodic}.

\vskip .3cm
\noindent {\bf Question 2:} Is it true that every homologically bounded contact manifold has periodic positive equivariant symplectic homology?
\vskip .2cm

We also conjecture that the answer is positive, although we are not aware of any results that directly support this claim.

To conclude this section, we pose a question in the spirit of the contact Hofer–Zehnder conjecture \cite{CGG}. Essentially, this conjecture suggests a dichotomy: either the number of periodic orbits is minimal, in the sense that it is somehow forced by the homology, or there exist infinitely many periodic orbits. As emphasized throughout this paper, all currently known examples of contact forms with finitely many closed orbits are non-degenerate and lacunary. In this work, we proved that, under suitable assumptions, the number of closed orbits of such contact forms is a contact invariant and provides a lower bound for general non-degenerate contact forms $\alpha$ satisfying weak index conditions. Moreover, this bound is attained if and only if $\alpha$ is lacunary. This leads to the following question, closely related to Question 1.

\vskip .3cm
\noindent {\bf Question 3:} Let $M$ be a prequantization of an orbifold. Is it true that every contact form on $M$ has either $\dim \H_*(M/S^1;\Q)$ or infinitely many closed orbits? Furthermore, is it true that any contact form on $M$ with finitely many closed orbits is necessarily non-degenerate and lacunary?
\vskip .2cm

By the results of this work, a positive answer to the second part would imply a positive answer to the first, at least when $\HC_*(M)$ is periodic (a property expected to hold in general, although this remains unknown). Conversely, a positive answer to the first part would imply a positive answer to the second, at least for non-degenerate contact forms satisfying the weak index assumptions of Theorem \ref{thm:main}. Both statements are known to hold in dimension three under the additional assumption that the first Chern class of the contact structure is torsion \cite{CGHHL1,CGHHL2}. In higher dimensions, they were established in \cite{CGG} for non-degenerate dynamically convex contact forms on $S^{2n+1}$ that are invariant under the antipodal map.

\end{document}